\newcommand{\ola}{\overleftarrow}
\newcommand{\ora}{\overrightarrow}
\newcommand{\rmd}{\mathrm{d}}
\newcommand{\rme}{\mathrm{e}}
\newcommand{\eqsp}{\,}
\newcommand{\param}{\theta}
\newcommand{\Xora}{\overrightarrow{\mathbf{U}}}
\newcommand{\Xola}{\overleftarrow{\mathbf{U}}}
\newcommand{\Xbar}{\bar{\mathbf{U}}}
\newcommand{\score}{\mathsf{s}}
\newcommand{\eqdef}{:=}
\newcommand{\eg}{\emph{e.g.}}
\def \R{\mathbb{R}}
\def \l{\left}
\def \r{\right}
\def \Wc{\mathcal{W}}
\def \Lc{\mathcal{L}}
\def \eqL{\overset{\mathcal{L}}{=}}
\newcommand{\piInfty}{p_{\infty}}
\newcommand{\SigmaEps}{\Sigma_{\varepsilon}}
\newcommand{\constLip}{L_0}
\newcommand{\Id}{\mathbf{I}}
\newcommand{\ie}{\emph{i.e.}}
\newcommand{\E}{\mathbb{E}}
\newcounter{hypH}
\renewcommand{\thehypH}{H\arabic{hypH}} 
\newenvironment{hypH}{
  \refstepcounter{hypH}
  \begin{itemize}
    \item[{\bf \thehypH}]
}{\end{itemize}}
\theoremstyle{plain}
\newtheorem{theorem}{Theorem}[section]
\newtheorem{proposition}[theorem]{Proposition}
\newtheorem{lemma}[theorem]{Lemma}
\theoremstyle{definition}
\theoremstyle{remark}
\newtheorem{remark}[theorem]{Remark}
\title{Wasserstein Convergence of Critically Damped Langevin Diffusions}
\author[$\star$]{Stanislas Strasman}
\author[$\star,\amalg$]{Sobihan Surendran}
\author[$\dag$]{Claire Boyer}
\author[$\star$]{Sylvain Le Corff}
\author[$\star$]{Vincent Lemaire}
\author[$\ddag$]{Antonio Ocello}
\affil[$\star$]{{\small Sorbonne Université and Université Paris Cité, CNRS, LPSM, F-75005 Paris, France.}}
\affil[$\amalg$]{{\small LOPF, Califrais’ Machine Learning Lab, Paris, France.}}
\affil[$\dag$]{{\small LMO, Université Paris-Saclay, UMR CNRS 8628, Institut Universitaire de France, Orsay, France.}}
\affil[$\top$]{{\small CREST, Groupe ENSAE-ENSAI, ENSAE Paris,
  Institut Polytechnique de Paris, 91120 Palaiseau, France.}}
\begin{document}

\maketitle

\begin{abstract}
    Score-based Generative Models (SGMs) have achieved impressive performance in data generation across a wide range of applications and benefit from strong theoretical guarantees. Recently, methods inspired by statistical mechanics, in particular, Hamiltonian dynamics, have introduced Critically-damped Langevin Diffusions (CLDs), which define diffusion processes on extended spaces by coupling the data with auxiliary variables. These approaches, along with their associated score-matching and sampling procedures, have been shown to outperform standard diffusion-based samplers numerically. 
In this paper, we analyze a generalized dynamic that extends classical CLDs by introducing an additional hyperparameter controlling the noise applied to the data coordinate, thereby better exploiting the extended space. We further derive a novel upper bound on the sampling error of CLD-based generative models in the Wasserstein metric.  This additional hyperparameter influences the smoothness of sample paths, and our discretization error analysis provides practical guidance for its tuning, leading to improved sampling performance. 
\end{abstract}

\section{Introduction}
Recent surge in machine learning and artificial intelligence has driven substantial progress in generative modeling, particularly with the development of Score-based Generative Models (SGMs). These models build on the foundational works in Denoising Diffusion Probabilistic Models (DDPMs) by \cite{dickstein2015, song2019generative, ho2020denoising} and the advances in score-matching techniques introduced by \cite{hyvarinen2005estimation, Vincent}. 

\paragraph{Score-based Generative Models (SGMs).} SGMs are probabilistic models designed to create synthetic instances of a target distribution when only a genuine sample (\eg, a dataset of real-life images) is accessible. First, the forward process involves progressively perturbing the training distribution by adding noise to the data until its distribution approximately reaches an {easy-to-sample} distribution $\pi_{\infty}$. Then, the backward process involves learning to reverse this noising dynamics by sequentially removing the noise. SGMs have quickly gained recognition for their ability to generate high-quality synthetic data. Their applications span diverse areas, including computer vision \cite{li2022srdiff,lugmayr2022repaint}, natural language processing \cite{gong2022diffuseq}, and other domains where realistic data generation is crucial. This growing body of work has been comprehensively surveyed by \cite{yang2023diffusion}, highlighting the versatility and potential of diffusion models. In addition, SGMs provide a particularly interesting class of prior distributions to solve Bayesian inverse problems. Although they lack an explicit and tractable probability density function, a very active research area focuses on combining Monte Carlo guidance and SGMs to solve posterior sampling problems, \cite{wu2023practical,moufad2024variational,cardoso2024monte}.

\paragraph{Critically-damped Langevin Diffusion (CLD).} In \cite{dockhorn2021score}, the authors proposed Critically-damped Langevin Diffusion as a second-order extension of conventional diffusion models. By introducing velocity variables alongside the usual state variables —much like in Hamiltonian Monte Carlo— CLD accelerates exploration of high-dimensional spaces and often yields better sample quality in practice. Although empirical work demonstrates the benefit of CLD over standard score-based models \cite{dockhorn2021score}, its theoretical underpinnings remain incomplete. Existing convergence guarantees are only expressed in terms of Kullback--Leibler divergence \cite{conforti2025kl, chen2023sampling} and fail to capture any computational advantage for kinetic dynamics, leaving a gap between observed performance and formal analysis.  

\paragraph{Contributions.} 
We first discuss the challenges of establishing Wasserstein convergence under the standard assumptions used for Variance-Preserving (VP) or Variance-Exploding (VE) SGMs, where the forward process is elliptic \cite{gao2023wasserstein,strasman2024analysis, gentiloni2025beyond, bruno2023diffusion}. We then provide, to the best of our knowledge, the first upper bound for CLD in the Wasserstein metric through coupling techniques under weaker assumptions, achieving convergence rates comparable to those of other SGMs. Crucially, this result is not implied by previous Kullback–Leibler divergence bounds \cite{conforti2025kl, chen2023sampling}, and our proof technique differs significantly from existing Wasserstein analyses of diffusion models. 

However, it is possible to introduce a modified dynamics that includes an additional hyperparameter controlling the noise on the data coordinate of CLD, thereby restoring ellipticity and enabling an analysis closely aligned with that of VP and VE models, but formulated on an extended phase space with matrix-valued drifts and diffusions.  This hyperparameter governs the smoothness of sample paths, allowing a detailed analysis of the generative error as a function of this smoothness parameter. Such analysis offers practical guidance for tuning this hyperparameter and potentially improves sampling performance compared to standard SGMs and CLD methods. The benefits of this additional parameterization are demonstrated numerically on challenging synthetic datasets.

\section{Notation and Background}
\label{section-kOU}

\paragraph{Notation.} 
We use $\pi$ to denote probability distributions and $p$ to denote their corresponding densities with respect to the Lebesgue measure or another reference measure.
The identity matrix of size $d$ is written $\Id_d$. For $x,y\in \mathbb{R}^d$, we denote by $\langle x,y \rangle$ the standard inner product of $\mathbb{R}^d$, by $\|\cdot\|$ the Euclidean norm for vectors and its induced operator norm for matrices. Let $\|\cdot\|_F$ be the Frobenius norm defined for $A \in \mathbb{R}^{d \times d}$ as  $\left\|A \right\|_F \eqdef \sqrt{\text{Tr} (A^\top A)}$. For symmetric matrices $A,B\in\mathbb{R}^{d\times d}$, we write $A \preccurlyeq B $ to mean that $B - A$ is positive semidefinite. We denote the time derivative of a function by $ \dot f(t) \eqdef \frac{\rmd}{\rmd t} f(t)$. We use the symbol $\otimes$ either for the Kronecker product when applied to matrices and for the product of probability measures when applied to distributions. The intended meaning will be clear from context. For any matrix $A \in \mathbb{R}^{d \times d}$ we denote its largest eigenvalue (resp. singular value) by $\lambda_{\max}(A)$ (resp. $\sigma_{\max}(A)$) and smallest eigenvalue by $\lambda_{\min}(A)$ (resp. $\sigma_{\min}(A)$). For random vectors $X,Y \in \mathbb{R}^d$, define $\|X\|_{L_2} \eqdef
\bigl(\mathbb{E}\bigl[ \|X\|^2\bigr]\bigr)^{1/2}$ and we write $X \perp Y$ to mean that  $X$ is independent of $Y$. The notation $\mathcal{L}(X)$ denotes the law (distribution) of a random vector $X$.
For $a, b \in \mathbb{R}$, we write $a \wedge b \eqdef \min\{a,b\}$ and $a \vee b \eqdef \max\{a,b\}$.

\paragraph{Score-based Generative Models.} SGMs employ a Gaussian Markovian diffusion process that smoothly transports the target data distribution $\pi_{\rm data} \in \mathcal{P} ( \mathbb{R}^d)$ towards an easy-to-sample Gaussian distribution  $\piInfty \in \mathcal{P} ( \mathbb{R}^d)$. This process, known as forward diffusion, is the solution to the following stochastic differential equation (SDE) on a fixed time horizon $t \in [0,T]$,
\begin{align} \label{eq:forward_SDE_OU}
\rmd \ora X_t &= -\alpha \beta(t) \ora X_t \rmd t + \sqrt{2\beta(t)} \rmd B_t, \quad \ora X_0 \sim \pi_{\text{data}},
\end{align}
with $(B_t)_{t \in [0,T]}$ a $d$-dimensional Brownian motion and $\beta(t) : [0,T] \to \mathbb{R_+}$. In particular, when $\alpha = 0 $ and $\beta(t)$ is of the form $\beta^{\rm VE}(t) \dot \beta^{\rm VE}(t)$ the process is known as Variance Exploding \cite{song2019generative} and when $\alpha = 1 $, the process is known as Variance Preserving \cite{dickstein2015,ho2020denoising}. This transformation can be reversed \cite{anderson1982, haussmann1986time, cattiaux2023time} and is also governed by an SDE, known as the backward process
\begin{align} \label{eq:backward_SDE_OU}
\rmd \ola X_t & = \left( \alpha \beta(T-t) \ola X_t + 2  \beta (T-t)  \nabla \log p_{T-t}(\ola X_t) \right) \rmd t+ \sqrt{2 \beta (T-t)} \rmd B_t \eqsp, \quad \ola X_0 \sim  p_T \eqsp,
\end{align}
where $p_t$ is the time marginal p.d.f. of the forward process for $0\leq t \leq T$. As a consequence, $\ola X_T$ has the same distribution as $\pi_{\rm data}$. In practice, however, one cannot draw exact i.i.d. samples from this continuous-time process, and implementations of SGMs rely on three key approximations.
\begin{itemize}
    \item \emph{Mixing error}. The distribution of $\ora X_T$ is not analytically available in most cases, $\ola X_0$  is initialized at a known distribution $\pi_{\infty}$, close to $p_T$. 
    \item \emph{Discretization error.} In most cases, the backward dynamic is non-linear, the backward process is discretized to sample from $\ola X_T$, which introduces an error due to evaluating the (time-continuous) score function only at discrete time steps. 
    \item \emph{Approximation error.} The score function depends on the unknown data distribution and thus cannot be computed in closed form. To approximate it, we use a neural network architecture $s_{\param} : [0,T] \times \mathbb{R}^d \mapsto \mathbb{R}^d$ parameterized by $\theta \in \Theta$,  and trained, for example, via Denoising Score Matching 
 \cite{Vincent}: 
\begin{align} \label{eq:score_matching_OU}
\mathcal{L}_{\mathrm{DSM}}(\param) = \mathbb{E}\left[ \lambda (t) \left\|s_{\param} \left( \tau, \ora X_{\tau} \right) - \nabla \log p_\tau \left(\ora X _\tau| \ora X _0 \right)\right\|^2\right]\eqsp,
\end{align}
where $\tau$ is uniformly distributed on $[0,T]$, $\tau $ is independent of $\ora X_0$, $\ora X_\tau \sim p_\tau(\cdot| \ora X_0)$ and $\lambda : [0,T] \to \mathbb{R}_{>0}$ is a positive weighting function.
\end{itemize}
Theoretical studies of SGMs focus on those sources of errors to derive results for the total variation distance \cite{debortoli2021}, the Kullback--Leibler divergence \cite{conforti2025kl,debortoli2021, chen2023sampling,benton2024nearly} or the Wasserstein-2 distance \cite{lee2022convergence,lee2023convergence, bruno2023diffusion, gao2023wasserstein, strasman2024analysis, gentiloni2025beyond}.

    \paragraph{Kinetic Ornstein--Uhlenbeck.} Inspired by Hamiltonian mechanics, kinetic SGMs operate in an extended position-velocity phase space, defined as $\Xora_t= (
        \ora X_t,  \ora V_t)^\top \in \mathbb{R}^{2d}$ which satisfies the following stochastic differential equation
    \begin{equation} \label{eq:forward_SDE}
        \rmd \Xora_t = A \Xora_t\rmd t + \Sigma \rmd B_t \eqsp, \qquad \Xora_0 \sim \pi_{\rm data} \otimes \pi_v\eqsp,
    \end{equation}
    where $\pi_v \sim \mathcal{N}(0,v^2 \Id_d)$, $(B_t)_{t \in [0,T]}$ denotes a $2d$-dimensional standard Brownian motion,
    \begin{align} \label{eq:matrix_A}
    A= \begin{pmatrix}
            0 & a^2 \\
            -1 & -2 a 
        \end{pmatrix} \otimes \Id_d \eqsp, 
        \quad \mathrm{and} \quad
        \Sigma = \begin{pmatrix}
            0  & 0 \\ 0 & \sigma 
        \end{pmatrix} \otimes \Id_d \eqsp.
    \end{align} 
Similar to \eqref{eq:forward_SDE_OU}, this process is Gaussian conditional on the distribution at time $0$ (see Proposition \ref{prop:convergence-fwd-flow}). The associated linear system corresponds to the stochastic analogue of a damped harmonic oscillator in the critically damped regime, with $a = 1/\sqrt{M}$ and $\sigma = 2/\sqrt{a}$, following the parameterization of \cite{dockhorn2021score}. Note that \eqref{eq:forward_SDE} can also be expressed using a time-change or noise-schedule function $\beta : [0,T] \to \mathbb{R}_+$ (see Section \ref{app:time_rescaling}) . This will not play a key role in our theoretical analysis but is an important feature of practical numerical implementation. 

Applying time-reversal results for diffusion processes \cite{haussmann1986time,cattiaux2023time}, the backward process $(\Xola_t)_{t\geq 0}$ is solution to the following SDE:
\begin{align}
    \label{eq:backward_SDE}
    \rmd \Xola_t = -A \Xola_t \rmd t + \Sigma^2 
        \nabla \log p_{T-t}\left(\Xola_t\right)\rmd t +\Sigma \rmd B_t\eqsp,
\end{align}
with initial condition $\Xola_0\sim  p_T$, where $p_t : \mathbb{R}^{2d} \to \mathbb{R}_+$ is the probability density function of $\Xora_t$. 

\paragraph{CLD-based SGMs.} To sample from $\Xola_t$ (and, in particular, from $\ola X_T \sim \pi_{\rm data}$), one must rely on the three SGM approximations discussed earlier. The \emph{mixing error} is analogous to that of standard SGMs and leverages the ergodicity of the forward process—converging to a known Gaussian distribution—to initialize the backward process. The \emph{discretization} of the nonlinear backward SDE can be performed using classical numerical integrators commonly employed in SGMs, such as Euler–Maruyama \cite{song2021score} or exponential integrators \cite{conforti2025kl}. Additionally, due to the Hamiltonian structure of the kinetic process, symplectic integrators \cite{neal2011mcmc} may also be appropriate \cite{dockhorn2021score}. Finally, the \emph{score approximation} can be implemented by applying Denoising Score Matching--similar to \eqref{eq:score_matching_OU}--on the extended phase space $\Xora_t = (\ora X_t , \ora V_t)^\top$, that is, using the conditional score function $\nabla \log p_t ( \Xora_t | \Xora_0)$. However, since the distribution of $\ora V_0$ is known and Gaussian, it can be analytically marginalized, yielding the following objective function known as Hybrid Score Matching:
\begin{align*}
\mathcal{L}_{\mathrm{HSM}}(\theta) & = \mathbb{E} \left[ \lambda(t) \left\| s_\theta(\tau, \Xora_\tau) - \nabla \log p_\tau(\Xora_\tau \mid \ora X_0)\right\|^2 \right] \eqsp, 
\end{align*}
where $\tau$ is uniformly distributed on $[0,T]$, $\tau \perp \ora X_0$, $\Xora_\tau \sim p_\tau(\cdot| \ora X_0)$ and $\lambda : [0,T] \to \mathbb{R}_{>0}$ is a positive weighting function. Empirically, Hybrid Score Matching tends to yield more stable training dynamics by reducing the variance of the training objective  \cite{dockhorn2021score}.

\section{Wasserstein Convergence of CLDs }

In this section, we analyze the convergence of CLDs with respect to the 2-Wasserstein distance under the Euler–Maruyama discretization scheme. 
We first discuss the motivation for this analysis before introducing the setting, assumptions, and main results.

\subsection{Motivation} 

While convergence results have been established in terms of the Kullback–Leibler divergence \cite{conforti2025kl, chen2023sampling}, no analogous results currently exist for the Wasserstein-2 metric. Proving convergence in $\mathcal{W}_2$ requires establishing a contraction property of the backward dynamics in this metric—a challenging task for hypo-coercive SDEs \cite{villani2009hypocoercivity, eberle_guillin_zimmer2019, monmarche2023almost}. The main difficulty arises from the degeneracy of the diffusion term, since the Brownian motion in CLDs acts only on the velocity component. To illustrate this point, consider the following example.

Introduce the change of variables $\ora Y_t = \ora X_t + a\ora V_t$, under which one component of the system evolves as an Ornstein--Uhlenbeck process. Writing $\overrightarrow{Z}_t = (\overrightarrow{X}_t,\overrightarrow{Y}_t)^\top$, the forward SDE in \eqref{eq:forward_SDE} can be rewritten as
\begin{align*} 
    \rmd \overrightarrow{Z}_t =  a \begin{pmatrix}
        -1 & 1 \\
        0 & -1
    \end{pmatrix} \overrightarrow{Z}_t \rmd t + \begin{pmatrix}
        0 & 0 \\ 0 & \sigma 
    \end{pmatrix} \rmd B_t\eqsp.
\end{align*}
Notably, the transformed process $(\ora Y_t)_{t \in [0,T]}$ corresponds to an Ornstein--Uhlenbeck process.
By the time-reversal property, the corresponding backward process satisfies
\begin{align*}
   \ola Y_t = \ola X_t + a \ola V_t \eqsp,
\end{align*}
which leads to the following backward SDE: 
\begin{align} \label{eq:backward_SDE_modified}
    \rmd \overleftarrow{Z}_t = a \begin{pmatrix}
        1 & -1 \\
        0 & 1
    \end{pmatrix} \overleftarrow{Z}_t \rmd t + \sigma^2 \begin{pmatrix}
        0 \\ \nabla_{y} \log p_{T-t} \left( \overleftarrow{Z}_t \right)
    \end{pmatrix} \rmd t  + \begin{pmatrix}
        0 & 0 \\
        0 & \sigma 
    \end{pmatrix} \rmd B_t \eqsp,
\end{align}
where $p_{t}$ denotes the probability density function of $\overrightarrow{Z}_t$. 
A standard approach to establishing contraction consists in studying the difference process associated with the dynamics in \eqref{eq:backward_SDE_modified}, starting from two deterministic initial conditions $(x_0, y_0), (x'_0, y'_0) \in \mathbb{R}^{2d}$ and denoting by $(X_t, Y_t)_{t \in [0,T]}$ and $(X'_t, Y'_t)_{t \in [0,T]}$ the corresponding solutions. Under a synchronous coupling--\textit{i.e.} using the same Brownian motion to drive the evolution of both processes--the difference process becomes a deterministic ODE, whose stability properties determine the contraction properties of the system.
In particular, using the mean value theorem applied to the gradients of the log-density, the following holds for $t \in [0;T)$:
\begin{align} \label{eq:contraction}
    \rmd \begin{pmatrix}
        X_t - X'_t \\ Y_t - Y'_t
    \end{pmatrix} = \begin{pmatrix}
        a + \sigma^2 G_t & -a \\
        0 & a + \sigma^2 H_t
    \end{pmatrix} \begin{pmatrix}
        X_t - X'_t \\ Y_t - Y'_t
    \end{pmatrix} \rmd t \eqsp.
\end{align}
where 
\begin{align*}
H_t &= \int_0^1 \nabla_y^2 \log p_{T-t} \big(X'_t + \gamma(X_t - X'_t),\; Y'_t + \gamma(Y_t - Y'_t)\big) \, \rmd \gamma\eqsp,\\
G_t &= \int_0^1 \nabla_y \nabla_x^\top \log p_{T-t}\big(X'_t + \gamma(X_t - X'_t),\;  Y'_t + \gamma(Y_t - Y'_t)\big) \, \mathrm{d}\gamma\eqsp.
\end{align*} 
To ensure contraction of the system, all eigenvalues of the matrix in \eqref{eq:contraction} must be negative. However, the main difficulty lies in controlling the term $G_t$, which involves the mixed second-order derivative $\nabla_y \nabla_x^\top \log p_{T-t}$. For contraction to occur, this term must also be sufficiently negative. This is a strong and challenging requirement, as it demands a form of joint concavity of cross-derivatives, which is not generally ensured even when $p_{T-t}$ is strongly log-concave in each variable separately.

\subsection{Settings: Dynamics and Backward Discretization} 

\paragraph{Position-noise regularization in the extended phase space. } 
As detailed in \cite{dalalyan2020sampling}, kinetic Langevin-based samplers depend on the mixing rate and on the regularity of the underlying dynamics. To better exploit the extended phase space, we introduce a modified dynamics that adds a small noise term on the position coordinate $\varepsilon \geq 0$. Crucially, when $\varepsilon$ is strictly positive, this modification restores ellipticity of the forward and backward processes, which facilitates greatly the theoretical analysis. This hyperparameter controls the smoothness of the sample paths and the analysis of the discretization error allows a practical tuning to improve sampling performance in comparison with standard SGM models and kinetic-based diffusion samplers. 

The diffusion coefficient of the forward SDE is then given by
\begin{align*}
    \SigmaEps \eqdef \begin{pmatrix}
        \varepsilon & 0 \\
        0 & \sigma
    \end{pmatrix} \otimes \Id_d \eqsp,
\end{align*}
giving a process $(\Xora_t)_{t \in [0,T]} \in \mathbb{R}^{2d}$ which satisfies the following SDE
\begin{equation}
\label{eq:forward_SDE-eps}
    \rmd \Xora_t = A \Xora_t\rmd t + \SigmaEps \rmd B_t \eqsp, \qquad \Xora_0 \sim \pi_{\mathrm{data}} \otimes \pi_v
\end{equation}
with $\varepsilon \geq 0$. Note that the case $\varepsilon = 0$ recovers the classical CLD framework.
In the following, we write
\begin{equation}
    \label{eq:def:score}
    \score_t(u) = \nabla \log p_t(u)\eqsp, \quad \text{ for }t\geq 0, \eqsp u\in\R^{2d} \eqsp.
\end{equation}

\paragraph{Modified score function.} Following ~\cite{conforti2025kl}, we adopt a modified score formulation based on the rescaled density \( \tilde{p}_t := p_t / p_\infty \), where $p_{\infty}$ is the density of the stationary distribution associated with \eqref{eq:forward_SDE}. This perspective, also emphasized in~\cite{cattiaux2023time,conforti2022time,strasman2024analysis, conforti2025kl,gentiloni2025beyond,pham2025discrete},
reveals deep connections with stochastic control theory. In particular, the modified score satisfies a Hamilton–Jacobi–Bellman (HJB) equation, which we highlight and exploit in the sequel. With this notation, the backward process $\Xola$ can be written equivalently as
\begin{align}
\label{eq:backward:modified}
    \rmd \Xola_t = \tilde A_{\epsilon} \Xola_t\rmd t + \SigmaEps^2 \nabla \log \tilde{p}_{T-t} \left( \Xola_t \right)\rmd t  + \SigmaEps \rmd B_t \eqsp,
\end{align}
with $\tilde A_{\epsilon} = - A - \Sigma_{\epsilon}^2 \Sigma_{\infty}^{-1}$. In the following, we write $\tilde \score_t(u) := \nabla \log \tilde p_t(u)$, for $t\geq 0$, $\eqsp u\in\R^{2d}$.

\paragraph{Backward process discretization.} Let $N \in \mathbb{N}$ denote the number of discretization steps, so that $0 =t_0 < t_1 < \ldots <t_N = T$. To analyze the convergence of the discretized backward process, we introduce the continuous-time interpolation $(\Xbar_t)_{t \in [0,T]}$ of the Euler scheme for the time-reversed process $(\Xola_t)_{t \in [0,T]}$. This is defined as the Itô process such that, for $t \in \left[ t_k, t_{k+1} \right]$,
\begin{equation} \label{eq:euler_backward_CLD}
    \Xbar_{t}
    = \Xbar_{t_k} + \left(\tilde A_{\epsilon} \Xbar_{t_k} + \SigmaEps^2 \tilde \score_{T-t_k} \big( \Xbar_{t_k} \big) \right) (t-t_k) + \SigmaEps \big(B_t - B_{t_k}\big)\eqsp,
\end{equation}
where the process is initialized at $p_T$ (i.e., $\Xbar_0 \sim p_T$). When initialized at $\pi_{\infty}$, we denote by $(\Xbar^{\infty}_t)_{t \in [0,T]}$ 
the corresponding Itô process. For simplicity, the discretization is performed on a uniform grid, with step size $h = T/N$, so that $t_{k+1}-t_k=h$ for all $k$. 

\paragraph{Generative model.} The \emph{generative model} is defined as the continuous-time interpolation of the discretized backward process, in which the true (unknown) modified score function is replaced by its parametric approximation $\tilde s_{\theta} : [0,T] \times \mathbb{R}^{2d} \mapsto \mathbb{R}^d$. The resulting process, denoted by $(\Xbar^\theta_t)_{t \in [0,T]}$, satisfies for $t \in [t_k, t_{k+1}]$ 
\begin{equation} \label{eq:generative_model_CLD}
    \Xbar^{\theta}_t = \Xbar^{\theta}_{t_k} + \left(\tilde A_{\epsilon} \Xbar^{\theta}_{t_k} + \SigmaEps^2 \tilde s_{\theta} \big(t_k, \Xbar_{t_k}^{\theta} \big)\right) (t-t_k) + \SigmaEps \big( B_t - B_{t_k} \big)\eqsp,
\end{equation}
with initialization $\Xbar^{\theta}_{0} \sim \pi_{\infty}$. Learning the modified score function $\nabla \log \tilde p_t$ is theoretically equivalent to learning the standard score function $\nabla \log p_t$, since the two functions differ only by a known linear term. As a consequence, the modified score approximation can be written, for any $t \geq 0$ and any $u \in \mathbb{R}^{2d}$, as
$$ \tilde s_{\theta} (t, u) \eqdef s_{\theta}(t, u) + \Sigma_{\infty}^{-1} u \eqsp.
$$ 
Ultimately, the objective is to control the $\mathcal{W}_2$--distance between $\mathcal{L}(\bar X^{\theta}_T)$ the generated data marginal distribution at time $T$  and $\pi_{\rm data}$ the true data distribution (recall that $\Xbar_T^\theta = (\bar X_T^\theta, \bar V_T^\theta)^\top$).

\subsection{Assumptions}

\paragraph{Regularity assumptions.} 

\begin{hypH}
\label{hyp:fisher_info}
The data distribution $\pi_{\rm data}$ is absolutely continuous w.r.t.\ the Lebesgue measure, with density $p_{\rm data}$ and the relative Fisher information between $\pi_0 = \pi_{\rm data} \otimes \pi_v $ (\textit{i.e.} the initialization of the stochastic process defined in \eqref{eq:forward_SDE}) and $\pi_{\infty}$ is finite, \ie
  \begin{align*}
\mathcal{I}(\pi_{0} | \pi_{\infty}) \eqdef
\int \left\| \nabla \log \left( \frac{\rmd \pi_{0}}{\rmd \pi_{\infty}} (u) \right) \right\|^2 \pi_{0} (\rmd u )<\infty \eqsp.
\end{align*}
\end{hypH}

Assumption \ref{hyp:fisher_info}, particularly the requirement of finite Fisher information, is standard in most works establishing convergence bounds for SGMs. This condition is either explicitly assumed or implied by stronger regularity assumptions used in the literature \cite{conforti2025kl, strasman2024analysis}.

\begin{hypH}
\label{hyp:one-sided_lipschitz}
    \begin{itemize}
        \item[$(i)$] The data distribution is of the form  $p_{\rm data} (x) \propto \exp{(-(V(x) + H(x)))}$ and satisfies:
        \begin{itemize}
         \item There exists  $L>0$ such that $|H(x)-H(y)|\le L \|x-y\|$ for all $x,y\in\mathbb{R}^{d}$ \eqsp. 
        \item There exists $\alpha>0$ such that
            $   \alpha \Id_{d} \preceq \nabla^{2}V(x)$ for all $x\in\mathbb{R}^{d}$.
        \end{itemize}

        \item[$(ii)$] $(- \log p_{\mathrm{data}})$ is $\constLip$-one-sided Lipschitz,  \ie, for all $x,y \in \R^d$,
    \begin{align}
    \label{eq:hyp:one-sided_lipschitz}
        -\l(\nabla\log p_{\mathrm{data}}(x)-\nabla\log p_{\mathrm{data}}(y)\r)^\top (x-y) \leq \constLip \|x-y\|^2\eqsp.
    \end{align}
\end{itemize}
\end{hypH}

The first point of Assumption \ref{hyp:one-sided_lipschitz} models the data distribution as a strongly log-concave component $V$ perturbed by a term $H$, similar to the settings considered in \cite{brigati2024heatflowlogconcavitylipschitz, stephanovitch2025regularity}.
Intuitively, this assumption allows the distribution to deviate from strong log-concavity via the perturbation $H$, while still maintaining sufficient regularity for the analysis. When $H=0$, the distribution reduces to the strongly log-concave case, which is commonly used to establish contraction in the Wasserstein metric \cite{bruno2023diffusion, gao2023wasserstein, strasman2024analysis}.
The second point of Assumption \ref{hyp:one-sided_lipschitz} assumes a one-sided Lipschitz condition, which is weaker than requiring full Lipschitz continuity of the score function \cite{gentiloni2025beyond}. Notably, \ref{hyp:one-sided_lipschitz} implies the Lipschitz continuity of the score function.  
    This means that for all $t \in (0,T]$, there exists $L_t>0$ such that for all $u, \bar u \in\R^{2d}$,
    \begin{equation}
        \label{eq:Lipschitz-space}
            \l\|\score_{t}\left(u\right) - \score_{t}\left(\bar u\right)\r\|
            \leq
             L_t\l\|u-\bar u\r\|\;.
    \end{equation}
This condition can be verified under standard assumptions. In particular, if $\nabla \log p_{{\rm data}}$ is Lipschitz, the assumption holds. Since $\pi_{\rm data}$ and $ \pi_v$ are independent and $ \pi_v$ is often Gaussian, it suffices to assume that $p_{\rm data}$ is log-smooth, a common condition in the analysis of SGMs to ensure convergence \cite{gao2023wasserstein, chen2023sampling}.

Furthermore, Assumption \ref{hyp:one-sided_lipschitz} ensures that $\pi_{\rm data}$ has sub-Gaussian tails (Lemma \ref{lem:pi_data_sub_gaussian}). Consequently, all its polynomial moments are finite. In particular, $\pi_{\rm data}$ admits a finite second moment, a standard condition—either explicit or implied by stronger regularity assumptions—in convergence analyses of SGMs. Importantly, Assumption \ref{hyp:one-sided_lipschitz}, together with the polynomial growth condition $\|\nabla V(x)\| \le C(1+\|x\|^{m})$ for all $x \in \mathbb{R}^{d}$, with some $C>0$ and $m \in \mathbb{N}$, implies Assumption \ref{hyp:fisher_info} (Lemma \ref{lem:info_fisher_finie}).

These assumptions are satisfied by standard distributions such as Gaussian and mixtures of Gaussians.
They are strictly weaker than the conditions typically required in the literature to establish Wasserstein convergence guarantees—such as strong log-concavity combined with the Lipschitz continuity of the score function \cite{gao2023wasserstein, strasman2024analysis}—which hold only for non-degenerate Gaussian distributions and therefore exclude many practically relevant settings, even though they remain common in the literature.

\paragraph{Score approximation.}
\begin{hypH}
    \label{hyp:sup_approx}
    There exists $M \geq 0$ such that,
    \begin{align*}
        &\sup_{k \in \{0,..,N-1 \} } \left\| \tilde \score_{T-t_k} \left(\Xbar^{\theta}_{t_k} \right) -  \tilde s_{\theta} \left( T-t_k, \Xbar^{\theta}_{t_k} \right) \right\|_{L_2}
        \leq M \eqsp.
    \end{align*}
\end{hypH}
Assumption \ref{hyp:sup_approx} is standard in the literature 
\cite{debortoli2021,conforti2022time,gao2023wasserstein, bruno2023diffusion, strasman2024analysis, gentiloni2025beyond,cordero2025non} as essentially all convergence proofs for diffusion‐based score models require that the neural network has learned the score within some uniform error. This condition quantifies the ability of the neural network architecture to approximate the true score function and serves to control the score approximation error.

\subsection{Main Results}

We establish here the Wasserstein-$2$ convergence of CLD-based SGMs under these weak assumptions. A key step is to show that, under Assumption \ref{hyp:one-sided_lipschitz}, the scaled score function $\Sigma_{\varepsilon}^2 \nabla\log p_t$ (resp. $\Sigma_{\varepsilon}^2 \nabla\log \tilde{p}_t$) is $L_t$-Lipschitz (resp. $\tilde{L}_t$-Lipschitz), for $t>0$ (Proposition \ref{prop:Lipschitz-propagation}). This, in particular, yields an exponential decay of the operator norm $ \| \Sigma^2_{\varepsilon} \nabla^2 \log \tilde{p}_t \|$ as $t \to \infty$. The following theorem provides, to the best of our knowledge, the first convergence rates in Wasserstein distance for CLD-based approaches and aligns with recent developments in the literature of Variance-Preserving and Variance-Exploding SGMs.

\begin{theorem}
\label{th:wass:cld}
Assume that Assumptions \ref{hyp:fisher_info}- \ref{hyp:sup_approx} hold. Then, there exist $c_1, c_2>0$ such that, for all $h>0$,
    \begin{equation*}
        \mathcal{W}_2 \left( \pi_{\rm data} ,\Lc\l( \bar X^\theta_T\r) \right) \leq c_1\rme^{-c_2T}  \Wc_2\left( \pi_{\rm data}\otimes\pi_v,\pi_\infty\right) + c_1\sigma^2  M + c_1\sqrt{h} \eqsp.
    \end{equation*}
\end{theorem}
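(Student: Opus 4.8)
The plan is to decompose the error into three pieces via the triangle inequality for $\mathcal{W}_2$, introducing two intermediate processes: the exact time-reversal $(\Xola_t)$ started at $p_T$ and the Euler-discretised backward process $(\Xbar_t^\infty)$ started at $\pi_\infty$ with the \emph{true} modified score. Writing
\begin{align*}
    \mathcal{W}_2\!\left( \pi_{\rm data}, \Lc(\bar X_T^\theta)\right)
    &\le \mathcal{W}_2\!\left( \pi_{\rm data}\otimes\pi_v, \Lc(\Xbar_T^\infty)\right)
    + \mathcal{W}_2\!\left( \Lc(\Xbar_T^\infty), \Lc(\Xbar_T^\theta)\right),
\end{align*}
and noting that projecting onto the position coordinate only decreases $\mathcal{W}_2$, I would bound the first term by further splitting off the exact process $\Xola$, and handle each contribution separately. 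The engine behind every bound is a synchronous (reflection-free) coupling combined with a one-sided Lipschitz/contraction estimate for the backward drift: by Proposition~\ref{prop:Lipschitz-propagation}, $\Sigma_\varepsilon^2\nabla^2\log\tilde p_t$ has operator norm decaying exponentially, and since $\tilde A_\epsilon = -A - \Sigma_\epsilon^2\Sigma_\infty^{-1}$ is (after the change of variables restoring ellipticity, as in the Motivation section) a Hurwitz matrix, the combined drift of \eqref{eq:backward:modified} is contractive for $T$ large; a Grönwall argument on the squared distance of the synchronously coupled difference process then yields the exponential factor $c_1 e^{-c_2 T}$ multiplying $\mathcal{W}_2(\pi_{\rm data}\otimes\pi_v,\pi_\infty)$ in the mixing term.

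For the discretisation term, I would compare $(\Xola_t)$ and $(\Xbar_t)$ driven by the same Brownian motion: on each interval $[t_k,t_{k+1}]$ the difference of the drifts is $\bigl(\tilde A_\epsilon + \Sigma_\varepsilon^2\nabla^2\log\tilde p\bigr)(\Xola_t - \Xola_{t_k})$ plus a frozen-score error term, and the local displacement $\|\Xola_t - \Xola_{t_k}\|_{L_2}$ is $O(\sqrt h)$ because the diffusion coefficient $\Sigma_\varepsilon$ is bounded, the drift has at most linear growth, and $\pi_{\rm data}$ has finite second moment (sub-Gaussian tails, Lemma~\ref{lem:pi_data_sub_gaussian}); crucially $\Sigma_\varepsilon^2\tilde\score_{T-t}$ is $\tilde L_{T-t}$-Lipschitz with $\tilde L_{T-t}$ integrable/bounded away from the singularity at $t=T$ thanks to the propagation result, so the accumulated one-step errors, after applying the contraction to damp earlier contributions, sum to $c_1\sqrt h$. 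The score-approximation term is the cleanest: coupling $(\Xbar_t^\infty)$ and $(\Xbar_t^\theta)$ synchronously, the drift difference at each step is exactly $\Sigma_\varepsilon^2\bigl(\tilde\score_{T-t_k}(\Xbar_{t_k}^\theta) - \tilde s_\theta(T-t_k,\Xbar_{t_k}^\theta)\bigr)$, whose $L_2$-norm is $\le \|\Sigma_\varepsilon^2\| M \le \sigma^2 M$ by Assumption~\ref{hyp:sup_approx}, and Grönwall with the contractive homogeneous part turns this into $c_1\sigma^2 M$ (the constant absorbing $\int_0^T e^{-c_2(T-t)}\,dt \le 1/c_2$).

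The main obstacle I anticipate is establishing the \emph{uniform-in-$T$ contraction} of the backward drift despite the degeneracy discussed at length in the Motivation: the bare matrix $\tilde A_\epsilon$ is Hurwitz but not symmetric, so one does not get contraction in the standard Euclidean metric directly — one must work in a suitable $P$-weighted norm $\|x\|_P^2 = x^\top P x$ (with $P$ solving an appropriate Lyapunov equation for $\tilde A_\epsilon$, analogous to the hypocoercivity constructions of \cite{villani2009hypocoercivity, monmarche2023almost}) and then check that the perturbation $\Sigma_\varepsilon^2\nabla^2\log\tilde p_t$, whose norm decays exponentially by Proposition~\ref{prop:Lipschitz-propagation}, does not destroy contractivity in that metric. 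A secondary delicate point is handling the score's potential blow-up as $t\to T$ in the first discretisation step (the interval $[0,t_1]$ in backward time corresponds to $s$ near $T$ in forward time where $\tilde L_s$ may be large); here I would either use the early-stopping-free argument that $\tilde L_{T-t}$ stays bounded on $[0,T]$ under Assumption~\ref{hyp:one-sided_lipschitz} (the one-sided Lipschitz condition is exactly what prevents the singularity, via the propagation in Proposition~\ref{prop:Lipschitz-propagation}), or absorb a single $O(\sqrt h)$ first-step contribution into the constant $c_1$. Finally, since the bound must be uniform over all $h>0$, I would make sure the contraction rate $c_2$ and prefactor $c_1$ depend only on $\varepsilon,\sigma,a$ and the constants $L,\alpha,\constLip$ from the assumptions, not on $N$.
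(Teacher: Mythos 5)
Your proposal follows essentially the same route as the paper: the identical three-term triangle-inequality decomposition (discretization, score approximation, mixing) after projecting onto the position coordinate, synchronous couplings throughout, contraction in a weighted norm $\|\cdot\|_{\mathfrak{M}}$ built from a Lyapunov-type matrix for $\tilde A_\epsilon$ (the paper takes $\mathfrak{M}$ from the hypocoercivity literature so that $\mathfrak{M}\tilde A_\epsilon \preccurlyeq -(a-\eta)\mathfrak{M}$), and the exponential decay of $\|\Sigma_\varepsilon^2\nabla^2\log\tilde p_t\|$ from Proposition~\ref{prop:Lipschitz-propagation} to preserve contractivity and sum the one-step errors. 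One small correction: the singularity of $\tilde L_{T-t}$ occurs as $t\to T$ (forward time near $0$), i.e.\ on the \emph{last} backward step rather than on $[0,t_1]$, and the paper handles it by stopping at $t_N = T-\Delta$ and letting $\Delta\to 0$ using the finite relative Fisher information of Assumption~\ref{hyp:fisher_info}, a role for that assumption your sketch does not make explicit.
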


\begin{proof}
Let $P_X:\mathbb{R}^{2d}\to\mathbb{R}^d$ denote the projection $P_X(x,v)=x$.
Using that $P_X$ is 1–Lipschitz for the Euclidean norm, yields,
\begin{equation}
\label{eq:projection_contraction}
\mathcal{W}_2\!\left(\pi_{\rm data},\,\mathcal{L}\!\left(\bar X_T^\theta\right)\right)
\le
\mathcal{W}_2\!\left(\pi_{\rm data}\!\otimes\!\pi_v,\,\mathcal{L}\!\left(\Xbar_T^\theta\right)\right).
\end{equation}
The right-hand side of \eqref{eq:projection_contraction} is then bounded by decomposing the total generation error, using the triangle inequality, into the three sources of error for SGMs discussed in Section~\ref{section-kOU}:
 \begin{multline*}
     \mathcal{W}_2 \left( \pi_{\rm data}\otimes \pi_v ,\Lc\l( \Xbar^\theta_T\r) \right) \leq 
    \mathcal{W}_2 \left( \Lc\l( \Xola_T\r) , \mathcal{L} \left( \Xbar_T \right) \right) + \mathcal{W}_2 \left(\Lc\l( \Xbar^\infty_T\r) ,\Lc\l(\Xbar^\theta_T\r) \right)  \\ 
    + \mathcal{W}_2 \left( \mathcal{L} \left( \Xbar_T \right) , \mathcal{L} \left( \Xbar_T^{\infty} \right) \right)\eqsp,
\end{multline*}
where $\Xbar_T$ and $\Xbar_T^{\infty}$ are defined in Equation \eqref{eq:euler_backward_CLD}
and $\Xbar^\theta_T$ in Equation \eqref{eq:generative_model_CLD}. 
The first term (discretization error) is controlled by Lemma~\ref{lem:discr_error-modified_score}, which ensures that there exists $c_1>0$ such that, for all $h>0$,
\begin{equation*}
        \mathcal{W}_2( \Lc( \Xola_T) , \mathcal{L}( \Xbar_T )) \leq c_1\sqrt{h} \eqsp.
    \end{equation*}
The second term (score approximation error) is bounded by Lemma~\ref{lem:error:approx2},
    $$
    \mathcal{W}_2 \left(\Lc\l( \Xbar^\infty_T\r) ,\Lc\l(\Xbar^\theta_T\r) \right) \leq c_1\sigma^2M\eqsp.
    $$
Finally, the third term (mixing error) is controlled by Lemma~\ref{lem:error:mixing}, which guarantees the existence of $c_2>0$ such that,  
    \begin{equation*}
        \mathcal{W}_2 \left( \mathcal{L} \left( \Xbar_T \right) , \mathcal{L} \left( \Xbar_T^{\infty} \right) \right) \leq c_1\rme^{-c_2T}  \Wc_2\left( \pi_{\rm data}\otimes\pi_v,\pi_\infty\right) \eqsp.
    \end{equation*}
Combining these three bounds together with \eqref{eq:projection_contraction} concludes the proof.
\end{proof}

Theorem~\ref{th:wass:cld} establishes convergence rates in the Wasserstein distance for CLD-based approaches for all $\epsilon \geq 0$, recovering the vanilla CLD when $\epsilon=0$. In this case, our result aligns with the KL convergence analyses of kinetic Langevin dynamics by \cite{chen2023sampling} and \cite{conforti2025kl} for the specific choice $a=1$ and $\sigma=2$. It is worth emphasizing that, under our weaker assumptions, no equivalence holds between KL and Wasserstein convergence, so our results are not implied by existing KL-based analyses.
Beyond this theoretical bound, our analysis indicates that smaller values of $v$ yield better log-concavity constants; however, $v$ is typically chosen to be small but not too small, to avoid an explosion in the Lipschitz constant. This remark is consistent with the empirical evidence brought forward by \cite{dockhorn2021score}, which suggests that small values of $v$ may improve training stability and sampling performance.

\subsection{Strongly Log-Concave Case}
\label{sec:non-degenerate_CLD}

This subsection focuses on the elliptic case, \textit{i.e.}, when $\varepsilon>0$. In this setting, the forward process associated with CLD becomes a \emph{multidimensional Ornstein–Uhlenbeck process} with matrix-valued drift and diffusion coefficients.  
The presence of the additional noise term on the position coordinate restores ellipticity, which allows us to extend classical convergence analyses developed for VP and VE diffusions to this kinetic framework.  
 
Crucially, in the strongly log-concave case, i.e., when $H=0$ in Assumption~\ref{hyp:one-sided_lipschitz}, the upper bound can be expressed with more explicit constants that depend on the regularity of the data. Moreover, in this case, the one-sided Lipschitz condition becomes equivalent to the Lipschitz continuity of the score function. Assumption~\ref{hyp:one-sided_lipschitz} then reduces to the following assumption. 

\begingroup
\renewcommand{\thehypH}{H2$'$}
\begin{hypH}
\label{hyp:strong-log-concavity}
The data distribution is absolutely continuous w.r.t.\ the Lebesgue measure, is of the form  $p_{\rm data}(x) \propto \rme^{-V(x)}$ and is $\alpha_0$–strongly log-concave and $L_0$-log-smooth, \ie, there exists $\alpha_0>0$ and $L_0 > 0$ such that,
    $$   \alpha_0 \Id_{d} \preceq \nabla^{2} V(x) \preceq L_0 \Id_{d}, \qquad \text{for all } x\in\mathbb{R}^{d} \eqsp. $$
\end{hypH}
\endgroup
Under this assumption, the forward flow preserves both strong log-concavity and smoothness. Indeed, Propositions~\ref{prop:strongly_log_conc_prop} and~\ref{prop:smooth_prop} guarantee that $p_t$ remains $\alpha_t$–log-concave and $L_t$–log-smooth for all $t \in [0,T]$, with $\alpha_t$ and $L_t$ explicitly defined as functions of $\alpha_0$ and $L_0$ in the respective propositions. Such regularity properties are fundamental for proving exponential contraction in the Wasserstein metric, and are consistent with the analysis of classical (VP) diffusion models~\cite{bruno2023diffusion, gao2023wasserstein, strasman2024analysis}.
In contrast, \cite{chen2023sampling} obtain Wasserstein convergence guarantees without requiring strong log-concavity, instead relying on the compactness of the domain and \eqref{eq:Lipschitz-space}, a setting where convergence in KL divergence is effectively equivalent.
The following theorem presents Wasserstein convergence results under assumptions for which no such equivalence with the KL divergence holds. In particular, our result is not implied by existing analyses based on KL convergence.
 
\begin{theorem}
\label{th:wass:epsilon}
Assume that \ref{hyp:strong-log-concavity} and \ref{hyp:sup_approx} hold, and let $\varepsilon>0$. If the step size $h$ satisfies
\begin{align} \label{eq:admissible_h}
 0 < h < \frac{ 2 \min_k \alpha_{t_k} \left(  \sigma^2 \wedge \varepsilon^2 \right) - (\sigma - \varepsilon)^2 \max_k L_{t_k}- (a+1)^2}{\|A\|^2 + (\varepsilon^4 + \sigma^4) \max_k L_{t_k}^2 + 2 \left( \sigma^2 \vee \varepsilon^2 \right) \|A\| \max_k L_{t_k}} \eqsp,
\end{align}
then,
    \begin{equation*}
        \mathcal{W}_2 \left(
            \pi_{\rm data} ,\Lc\l( \bar X^\theta_T\r) 
        \right)
        \lesssim K_T \rme^{-aT} \Wc_2\left( \pi_{\rm data}\otimes\pi_v,\pi_\infty\right) + \sigma^2  M + \sqrt{h} \eqsp C_a(\varepsilon)\eqsp.
    \end{equation*}
    where $K_T = \left(1+\mathrm{max}\{a+1;a(a+1)\}T\right)$ and 
    \begin{equation*} 
    C_a(\varepsilon) = \Big(2 \|A\|^4 B_\varepsilon + 4 d (a^2 \sigma^2 + \varepsilon)^2 \Lambda^*_\varepsilon(T) \Big) h + 4 d \Big(\|A\|^2  + \sigma^4 \sup_{t \in [0, T]} L_{T-t}^2 \Big)\eqsp,
\end{equation*}
    with $B_\varepsilon$ and $\Lambda^*_\varepsilon(T)$ as in Lemma~\ref{lem:discr_error_non_degenerate}.
\end{theorem}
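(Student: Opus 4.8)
The plan is to follow the same three-term decomposition as in the proof of Theorem~\ref{th:wass:cld}, but now to exploit the ellipticity afforded by $\varepsilon>0$ to obtain explicit, data-dependent constants. First I would apply the $1$-Lipschitz projection $P_X(x,v)=x$ exactly as in \eqref{eq:projection_contraction} to reduce the bound on $\mathcal{W}_2(\pi_{\rm data},\mathcal{L}(\bar X_T^\theta))$ to a bound on $\mathcal{W}_2(\pi_{\rm data}\otimes\pi_v,\mathcal{L}(\Xbar_T^\theta))$, and then split by the triangle inequality into the discretization error $\mathcal{W}_2(\mathcal{L}(\Xola_T),\mathcal{L}(\Xbar_T))$, the score-approximation error $\mathcal{W}_2(\mathcal{L}(\Xbar^\infty_T),\mathcal{L}(\Xbar^\theta_T))$, and the mixing error $\mathcal{W}_2(\mathcal{L}(\Xbar_T),\mathcal{L}(\Xbar^\infty_T))$.

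The second step is to establish the exponential contraction of the discretized backward dynamics in the elliptic, strongly log-concave regime. Under \ref{hyp:strong-log-concavity}, Propositions~\ref{prop:strongly_log_conc_prop} and \ref{prop:smooth_prop} guarantee that $p_t$ is $\alpha_t$-log-concave and $L_t$-log-smooth for all $t\in[0,T]$, so $\SigmaEps^2\nabla^2\log p_t$ is two-sided bounded. Using a synchronous coupling of two copies of \eqref{eq:euler_backward_CLD} with different initial laws, the squared distance $\|\Xbar_t-\Xbar_t'\|^2$ evolves according to a one-step affine recursion; computing $\mathbb{E}\|\Xbar_{t_{k+1}}-\Xbar_{t_{k+1}}'\|^2$ in terms of $\mathbb{E}\|\Xbar_{t_k}-\Xbar_{t_k}'\|^2$ produces a contraction factor of the form $1 - c h + O(h^2)$, and the admissibility condition \eqref{eq:admissible_h} on $h$ is precisely what forces this factor below $1$: the numerator collects the strong-log-concavity gain $2\min_k\alpha_{t_k}(\sigma^2\wedge\varepsilon^2)$ against the loss terms $(\sigma-\varepsilon)^2\max_k L_{t_k}$ and $(a+1)^2$ coming from the mismatch between $\tilde A_\epsilon$-drift and diffusion scaling, while the denominator is the $O(h^2)$ remainder involving $\|A\|^2$, $(\varepsilon^4+\sigma^4)\max_k L_{t_k}^2$ and the cross term. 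Iterating over the $N=T/h$ steps turns this per-step contraction into the global factor $\rme^{-aT}$ up to the polynomial prefactor $K_T=(1+\max\{a+1;a(a+1)\}T)$, which accounts for the off-diagonal coupling term (the $-a$ entry) in the drift matrix analogous to \eqref{eq:contraction}; this handles the mixing error since $\Xbar_T$ and $\Xbar_T^\infty$ differ only in initial law.

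The third step is to bound the discretization error: here I would invoke Lemma~\ref{lem:discr_error_non_degenerate}, which supplies the one-step local error bound with constants $B_\varepsilon$ and $\Lambda^*_\varepsilon(T)$, and propagate it along the grid using the same per-step contraction established above; summing the geometric series of contracted local errors yields the $\sqrt{h}\,C_a(\varepsilon)$ term, where $C_a(\varepsilon)$ assembles the drift-variation contribution $2\|A\|^4 B_\varepsilon$, the diffusion/score-variation contribution $4d(a^2\sigma^2+\varepsilon)^2\Lambda^*_\varepsilon(T)$, and the order-one Brownian/score fluctuation term $4d(\|A\|^2+\sigma^4\sup_t L_{T-t}^2)$. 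The score-approximation error is controlled exactly as in Lemma~\ref{lem:error:approx2}: a synchronous coupling of \eqref{eq:euler_backward_CLD} started at $\pi_\infty$ against \eqref{eq:generative_model_CLD}, using Assumption~\ref{hyp:sup_approx} to bound the per-step perturbation by $\sigma^2 M$ and again summing the contracted errors. Combining the three bounds with the projection inequality gives the stated estimate.

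The main obstacle I expect is the second step: obtaining a per-step contraction factor that is genuinely below $1$ requires carefully tracking how the Euler drift $\mathrm{Id}+h\tilde A_\epsilon+h\SigmaEps^2\nabla^2\log\tilde p$ interacts with the noise covariance $h\SigmaEps^2$ in the synchronous-coupling recursion, and in particular showing that the strong-log-concavity gain, which enters only through $\sigma^2\wedge\varepsilon^2$ (the weaker of the two diffusion scales), dominates the loss from the asymmetric drift and from the mismatch $(\sigma-\varepsilon)^2$ between the two diffusion coordinates. The non-symmetry of $\tilde A_\epsilon$ and the fact that the contraction is driven by the eigenvalue $a$ of $A$ — rather than by the log-concavity constant directly — means one cannot simply read off a Lyapunov inequality; it is this bookkeeping, carried out in Lemma~\ref{lem:discr_error_non_degenerate} and its companions, that produces the precise admissibility window \eqref{eq:admissible_h} and the shape of $C_a(\varepsilon)$.
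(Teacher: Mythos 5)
Your plan matches the paper's proof: the same projection--plus--triangle-inequality decomposition into discretization, approximation and mixing errors, each handled by a synchronous coupling with a per-step contraction enforced by the step-size condition \eqref{eq:admissible_h}, and the discretization/approximation lemmata assembled exactly as you describe. Two small points of divergence from the paper's bookkeeping: in the elliptic case the analysis of Appendix~\ref{sec:proofs:epsilon} deliberately reverts to the \emph{unmodified} score and drift $-A$ (so the $(a+1)^2$ in \eqref{eq:admissible_h} is $-\lambda_{\min}(A^\top+A)$, not a property of $\tilde A_\epsilon$, and the relevant approximation lemma is Lemma~\ref{lem:error:approx} rather than Lemma~\ref{lem:error:approx2}); and the factor $K_T\rme^{-aT}$ in the mixing term comes from the forward-process contraction $\|\rme^{TA}\|\le K_T\rme^{-aT}$ applied to the initial distance $\|\Xbar_0-\Xbar_0^\infty\|_{L_2}=\mathcal{W}_2(p_T,\pi_\infty)$, with the backward iteration only guaranteeing non-expansion, rather than from iterating the backward contraction itself.
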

\begin{proof}
The error decomposition is the same as in Theorem \ref{th:wass:cld}. The full statement and proof for each error term is provided in Appendix \ref{sec:proofs:epsilon}.
\end{proof}

This bound highlights the stabilizing role of the parameter $\varepsilon > 0$, which restores ellipticity in the dynamics. A key observation is that 
$$ \Sigma_{\varepsilon} \nabla^2 \log p_{t} \Sigma_{\varepsilon} \preccurlyeq - (\varepsilon^2 \wedge \sigma^2) \alpha_t \Id_{2d} \eqsp, $$
which can be negative only for positive values of $\varepsilon$. In this sense, increasing $\varepsilon$ tends to enhance the contractive behavior of the dynamics, as also reflected by the admissible step-size condition~\eqref{eq:admissible_h}.
However, this effect is not purely beneficial: several terms in the discretization error scale with $\varepsilon^2$, illustrating that excessive noise injection may deteriorate the regularity of the process.
Consequently, there is a trade-off in the choice of $\varepsilon$ to balance these competing effects. This trade-off is numerically illustrated in Section \ref{sec:numerical_xp}. 
\begin{remark}
    Finite second order moment is also necessary in this approach and is deduced from \ref{hyp:strong-log-concavity} \cite[Lemma B.1]{gentiloni2025beyond}. Regarding \ref{hyp:sup_approx}, it is implied that the score approximation is now made for the true score function, not the modified one. 
\end{remark}

\section{Experiments} 
\label{sec:numerical_xp}

We illustrate the effect of the regularization parameter $\varepsilon$ on the generation quality of CLDs on a simple yet challenging toy dataset. The regularization parameter $\varepsilon$ is chosen to be in $\{0,0.1,0.25,0.5,1 \}$. Notably, $\varepsilon = 0$ corresponds to the vanilla CLD setting. 
Our source code is publicly available here\footnote{https://github.com/SobihanSurendran/CLD}.

\textbf{Evaluation metric. } To assess the quality of the generated samples, directly computing the Wasserstein-2 distance is infeasible, as it requires solving a computationally expensive optimal transport problem. Instead, we approximate the $\mathcal{W}_2$-distance between the generated samples (with distribution $\hat{\pi}$) and the training samples (with distribution $\pi{\mathrm{data}}$) using the sliced Wasserstein distance \cite{pot_library}. It is defined as $SW_2^2(\pi_{\rm data}, \hat \pi) = \mathbb{E}_{\mathbf{u} \sim \mathcal{U} (\mathbb{S}^{d-1})} [ \mathcal{W}_2^2 \left(\mathbf{u}_{\#}\pi_{\rm data},  \mathbf{u}_{\#}\hat \pi\right)]$ where $\mathcal{U} (\mathbb{S}^{d-1})$ denotes the uniform distribution over the unit sphere and $\mathbf{u}_{\#}$ is the push-forward operator associated with $\mathbf{u}$. The expectation is approximated using the standard Monte Carlo method with 2000 projections and $\pi_{\rm data}$ and $\hat \pi$ are replaced by their empirical distributions.

\textbf{Dataset. } We evaluate the generation quality on the Funnel distribution, which is characterized by a strong imbalance in variance across dimensions and was previously used in \cite{Neo_moulines_lecorff}. To further illustrate our results, we extend the evaluation to two additional challenging toy datasets (Appendix \ref{app:subsec:add_exp}): MG-25 (a 25-mode, 100-dimensional Gaussian mixture) and Diamonds (a 2-dimensional Gaussian mixture with a diamond-shaped geometry).

\textbf{Hybrid Score Matching. } Following the insights of \cite{ho2020denoising}, the networks are trained to predict the noise (or rescaled noise) added during the forward process. When $\varepsilon = 0$, we use the positive weighting function proposed by \cite{dockhorn2021score}  (see page 5, $\lambda(t) = \ell_t^{-2}$, in \cite{dockhorn2021score}). A similar reweighting, however, is not feasible for $\varepsilon \neq 0$ due to the matrix-valued nature of the objective function. Empirically, we observe that much of the training variance arises from the determinant computation involved in the $2 \times 2$ matrix inversions. To mitigate this, we set $\lambda(t) = \text{det}(\Sigma_{0,t})^2$, which effectively stabilizes training. We parameterize the score network as $s_{\theta}(\Xora_t, t) \eqdef - \Sigma_{0,t}^{-1} \alpha_{\theta}(\Xora_t,t)$ so that the hybrid score matching objective for $\varepsilon > 0$ is given by
\begin{align} \label{eq:hsm}
\mathcal{L}_{(\mathrm{HSM})^\varepsilon}(\param) = \mathbb{E}\left[ \text{det}(\Sigma_{0,t})^2 \left\| \Sigma_{0,t}^{-1} \left( s_{\param} \left( \tau, \ora X_{\tau} \right) - \Sigma_{0,t}^{1/2} G_{2d}  \right) \right\|^2\right]\eqsp,
\end{align}
where $G_{2d}$ denotes a $2d$-dimensional standard Gaussian noise.

\textbf{Model, Training and Generation. } All score networks share the same architecture: a fully connected neural network with three hidden layers of width 512 (see Figure~\ref{fig:nn_architecture}). Training is performed using the Adam optimizer to minimize the hybrid score matching objective in~\eqref{eq:hsm}, with a learning rate of $10^{-4}$ over 2000 epochs. The training set consists of 50 000 samples. For evaluation, we generate 50 000 samples using the Euler–Maruyama discretization scheme with $N=1000$ steps and compare them against a test set of 50 000 samples. Both training and generation are independently repeated five times. The training (Algorithm~\ref{alg:cld_training}) and sampling (Algorithm~\ref{alg:cld_sampling}) procedures are provided in Appendix~\ref{app:train_sample}.

\textbf{Effect of the regularization parameter. }
Figure~\ref{fig:W2_funnel} illustrates the Wasserstein error for different values of the regularization parameter $\varepsilon \in \{0, 0.1, 0.25, 0.5, 1\}$ and drift coefficient $a \in \{0.1, 0.25, 0.5, 1, 2\}$.
Across all values of $a$, introducing a small regularization parameter $\varepsilon$ notably improves generation quality, even though the score network in the regularized case must predict a vector twice as long as in the non-regularized one. Moreover, regularization consistently reduces the variance across runs.

\begin{wrapfigure}{r}{0.5\textwidth} 
  \centering
  \includegraphics[width=0.48\textwidth]{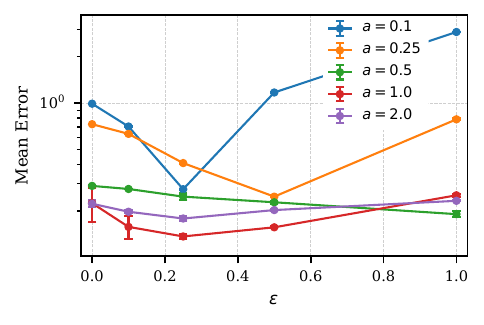} 
  \vskip -0.1in
  \caption{Mean $\mathcal{W}_2$ distance over 5 repetitions between the test set and generated samples on Funnel distribution in dimension 100. Error bars represent $\pm$ one standard deviation.}
  \label{fig:W2_funnel}
  \vskip 0.2in
  \includegraphics[width=0.48\textwidth]{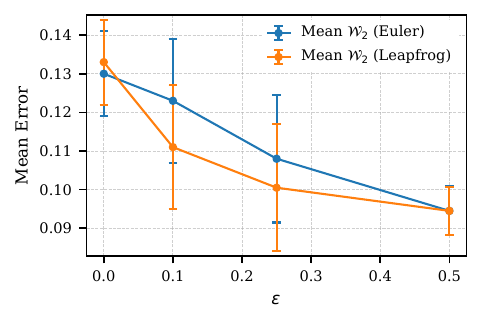} 
  \vskip -0.1in
  \caption{Mean $\mathcal{W}_2$ distance over 5 repetitions between the test set and generated samples on Funnel distribution with $a(\varepsilon) = 1 - \varepsilon^2/2$ and $\sigma(\varepsilon) = \sqrt{4 + \varepsilon^2}$.}
  \label{fig_funnel}
  \vskip -0.3in
\end{wrapfigure}

For smaller values of $a$, the error increases sharply when $\varepsilon = 0$ and also for large $\varepsilon$ values. 
In contrast, for moderate values of $a$, the error becomes negligible, with $\varepsilon \in [0.1, 0.5]$ yielding slightly better performance than the other settings. It is worth noting that our experimental configuration closely follows that of \cite{dockhorn2021score}, using $\sigma = \sqrt{2}$, $a = 2$, and in particular $\varepsilon = 0$. This observation justifies their choice of $a = 2$ for the vanilla CLD.

\textbf{Effect of $\varepsilon$ in controlled settings. }

Varying $\varepsilon$ modifies both the stationary distribution and the noise schedule—two factors known to strongly influence performance \cite{guo2023rethinking, chen2023sampling, strasman2024analysis}—it is important to control for these effects. To mitigate this confounding factor, one can fix the stationary distribution of the base case to $\mathcal{N}(0_{2d}, \Id_{2d})$ and maintain comparable noise levels in the position and velocity spaces by setting $a(\varepsilon) = 1 - \varepsilon^2/2$ and $\sigma(\varepsilon) = \sqrt{4 + \varepsilon^2}$. This choice ensures that the stationary distribution remains close to $\mathcal{N}(0_{2d}, \Id_{2d})$ for small values of $\varepsilon$. Although this adjustment becomes less accurate for larger $\varepsilon$, there is no practical limitation preventing the use of higher regularization values.

Figure \ref{fig_funnel} still shows an improvement in generation quality for small regularization parameters $\varepsilon$. To confirm that this effect is not tied to the discretization method, we reproduce the experiments using a Leapfrog integrator. As expected, the Leapfrog scheme outperforms Euler–Maruyama, yet the relative benefit of regularization persists. Finally, we emphasize that our objective is not to conduct an extensive numerical comparison of integrators or training strategies, but rather to highlight the potential of introducing controlled regularization within the CLD framework—a direction theoretically supported by Theorem~\ref{th:wass:epsilon}.

\section{Discussion}

In this paper, we present the first theoretical analysis of the sampling error of CLDs in the Wasserstein metric under weaker assumptions than those previously used in the literature. Our results show that CLD-based samplers can achieve comparable convergence rates while effectively leveraging the structure of the extended space.
We further analyze a generalized dynamic that extends classical CLDs by introducing a smoothness-controlling hyperparameter that regulates the noise on the data coordinate. This parameter provides more precise control over the regularity of sample paths and plays a central role in the discretization error analysis. Both theoretical and empirical results suggest that appropriately tuning this parameter leads to improved sampling quality and stability.
Overall, our work offers both theoretical insights and practical guidance for CLD methods in generative modeling, particularly in scenarios where standard assumptions may not hold. Several promising directions remain for future research. Replacing the Euler discretization scheme with a higher-order method—such as the Leapfrog integrator, which is specifically designed for CLD-based dynamics—could further enhance sampling performance. Analyzing such schemes would likely yield sharper convergence rates consistent with the numerical results. Moreover, developing denoiser architectures specifically tailored to the extended space represents another promising avenue for applied research, potentially leading to tighter bounds on the approximation error.

\section*{Acknowledgements}
The PhD of Sobihan Surendran was funded by the Paris Region PhD Fellowship Program of Région Ile-de-France.
The work of Antonio Ocello was funded by the European Union (ERC-2022-SYG-OCEAN-101071601). Views and opinions expressed are however those of the author only and do not necessarily reflect those of the European Union or the European Research Council Executive Agency. Neither the European Union nor the granting authority can be held responsible for them.
We would also like to thank SCAI (Sorbonne Center for Artificial Intelligence) for providing the computing clusters.

\bibliographystyle{apalike}
\bibliography{biblio}

\clearpage
\appendix

\begin{center}
    \LARGE \textbf{Supplementary Material for “Wasserstein Convergence of Critically Damped Langevin Diffusions”}
\end{center}

\vspace{1em}

\tableofcontents
\clearpage

\section{Forward process of Critically-Damped dynamics}

In this section, we establish several mathematical properties of the forward processes:
\begin{equation*}
    \rmd \Xora_t = A \Xora_t\rmd t + \SigmaEps \rmd B_t \eqsp, \qquad \Xora_0 \sim \pi_{\mathrm{data}} \otimes \pi_v \eqsp,
\end{equation*}
as defined in \eqref{eq:forward_SDE} with $\epsilon = 0$ or in  \eqref{eq:forward_SDE-eps} with $\epsilon \geq 0$. These results will be used throughout our subsequent analysis.

\begin{lemma} \label{matrix_A_decomposition}
Let $A$ be the matrix defined in \eqref{eq:matrix_A}, then
    $$
    A= \left( \begin{pmatrix}
                -a&-1\\
                1&0
            \end{pmatrix} \times
            \begin{pmatrix}
                -a&1\\
                0&-a
            \end{pmatrix} \times
            \begin{pmatrix}
                0&1\\
                -1&-a
            \end{pmatrix} \right) \otimes \Id_d
    $$
    so that
    \begin{align} \label{eq:exp_A_g=0}
    \rme^{tA} = \rme^{-ta}\begin{pmatrix}
                    1+at&a^2t\\
                    -t&1-at
                \end{pmatrix} \otimes \Id_d \eqsp,
    \end{align}
    and 
    \begin{align*} 
        \|\rme^{t A}\| \leq \|\rme^{t A}\|^{1/2}_1\|\rme^{t A}\|^{1/2}_\infty &\leq  \left(1+\mathrm{max}\{a+1;a(a+1)\}t\right)\rme^{-ta}
        \\&\leq  \left(1+(a+1)^2 t\right)\rme^{-ta}\eqsp.
    \end{align*}
\end{lemma}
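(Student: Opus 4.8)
The plan is to prove the three displayed claims in sequence: the factorization of $A$, the closed form for $\rme^{tA}$, and the two-step norm bound. Since everything is of the form (something) $\otimes\,\Id_d$ and the Kronecker product with $\Id_d$ preserves matrix products, exponentials and all the relevant norms (operator, $\|\cdot\|_1$, $\|\cdot\|_\infty$), I would immediately reduce to working with the $2\times 2$ block
\[
\widetilde A = \begin{pmatrix} 0 & a^2 \\ -1 & -2a \end{pmatrix},
\]
and only restore the $\otimes\,\Id_d$ at the very end.

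For the factorization, I would simply multiply out the three $2\times2$ matrices on the right-hand side. Taking the product of the last two first gives
$\begin{pmatrix} -a & 1 \\ 0 & -a\end{pmatrix}\begin{pmatrix} 0 & 1 \\ -1 & -a\end{pmatrix} = \begin{pmatrix} -1 & -2a \\ a & a^2 \end{pmatrix}$,
and then left-multiplying by $\begin{pmatrix} -a & -1 \\ 1 & 0 \end{pmatrix}$ yields $\begin{pmatrix} 0 & a^2 \\ -1 & -2a\end{pmatrix} = \widetilde A$, as required. This is a one-line verification. For the exponential, I would observe that $\widetilde A = -a\,\Id_2 + N$ where $N = \begin{pmatrix} a & a^2 \\ -1 & -a \end{pmatrix}$ is nilpotent ($N^2 = 0$, since $\operatorname{tr} N = 0$ and $\det N = -a^2 + a^2 = 0$), so that $-a\,\Id_2$ and $N$ commute and $\rme^{t\widetilde A} = \rme^{-ta}(\Id_2 + tN) = \rme^{-ta}\begin{pmatrix} 1+at & a^2 t \\ -t & 1-at\end{pmatrix}$. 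Tensoring with $\Id_d$ gives \eqref{eq:exp_A_g=0}.

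For the norm bound, I would use the standard interpolation inequality $\|M\| \le \|M\|_1^{1/2}\|M\|_\infty^{1/2}$ for the spectral norm in terms of the maximum-absolute-column-sum ($\|\cdot\|_1$) and maximum-absolute-row-sum ($\|\cdot\|_\infty$) norms, applied to $M = \rme^{t\widetilde A}$ (these norms are unchanged under $\otimes\,\Id_d$). From the explicit form, with $t\ge 0$ and $a>0$, the column sums of $\begin{pmatrix} 1+at & a^2 t \\ -t & 1-at\end{pmatrix}$ in absolute value are at most $1 + at + t = 1 + (a+1)t$ and $a^2 t + |1-at| \le 1 + a^2 t + at = 1 + a(a+1)t$, so $\|\rme^{t\widetilde A}\|_1 \le (1 + \max\{a+1, a(a+1)\}t)\rme^{-ta}$; the row sums give the same bound by an analogous computation, hence $\|\rme^{t\widetilde A}\|_\infty$ obeys it too. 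Combining via the interpolation inequality yields $\|\rme^{tA}\| \le (1 + \max\{a+1,a(a+1)\}t)\rme^{-ta}$, and finally $\max\{a+1, a(a+1)\} \le (a+1)^2$ since both $a+1 \le (a+1)^2$ and $a(a+1) \le (a+1)^2$ for $a > 0$, giving the last inequality.

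I do not anticipate a genuine obstacle here — this is a routine linear-algebra lemma. The only mild subtlety is being careful that the $\|\cdot\|_1$ and $\|\cdot\|_\infty$ bounds are stated in terms of the $2\times 2$ block (equivalently, unaffected by the Kronecker factor $\Id_d$), and handling the absolute value $|1-at|$ uniformly in $t\ge 0$, for which the crude bound $|1-at|\le 1 + at$ suffices.
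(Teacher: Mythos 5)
Your proposal is correct and follows essentially the same route as the paper: verify the $2\times2$ factorization, obtain the closed form of $\rme^{tA_1}$ (you use the splitting $A_1=-a\,\Id_2+N$ with $N$ nilpotent, the paper conjugates the Jordan block — the same computation in different clothing), and then bound $\|\rme^{tA}\|$ via $\|\cdot\|_1^{1/2}\|\cdot\|_\infty^{1/2}$ with the same column/row-sum estimates. No gaps.
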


\begin{proof}
The Jordan matrix decomposition of $A$ when $d=1$ is given by, 
    \begin{align*}
        A_1 = \begin{pmatrix}
        0 & a^2 \\
        -1 & -2 a 
    \end{pmatrix}=\begin{pmatrix}
                -a&-1\\
                1&0
            \end{pmatrix} \times
            \begin{pmatrix}
                -a&1\\
                0&-a
            \end{pmatrix} \times
            \begin{pmatrix}
                0&1\\
                -1&-a
            \end{pmatrix} \eqsp.
        \end{align*}
    We can use this decomposition to obtain a matrix factorization in any dimension. As for all $k \in \mathbb{N}$, $A^k = ( A_1^k \otimes \Id_d )$,
$$
    \rme^{t A}  = \sum_{k=0}^{\infty} \frac{t^k}{k !} \left( A_1^k \otimes \Id_d \right) \\
     = \left( \sum_{k=0}^{\infty} \frac{t^k}{k !}  A_1^k \right) \otimes \Id_d  \\
     = \rme^{t A_1} \otimes \Id_d\eqsp.
$$
Finally, we deduce an upper bound to the spectral norm of $\rme^{tA}$, as
   $$
   \|\rme^{tA}\|_1 \leq \rme^{-ta}\mathrm{max}\left\{(1+(a+1)t;1+a(a + 1)t\right\} \eqsp,
   $$
and  
$$
    \|\rme^{t A}\|_\infty \leq \rme^{-ta}\mathrm{max}\left\{(1+a(a+1)t;1+(a+1)t\right\}\eqsp.
   $$
   Then,
   $$
   \|\rme^{t A}\| \leq \|\rme^{t A}\|^{1/2}_1\|\rme^{t A}\|^{1/2}_\infty \leq \rme^{-ta} \left(1+\mathrm{max}\{a+1;a(a+1)\}t\right) \eqsp,
   $$
   which concludes the proof.

\end{proof}

\begin{lemma}
    \label{prop:convergence-fwd-flow}
    Let $(\Xora_t)_{t \in [0,T]}$ be a solution to the forward process \eqref{eq:forward_SDE-eps} with initial condition
    \begin{align*}
        \Xora_0 \sim \pi_{\rm data} \otimes \pi_v\;,
    \end{align*} where $\pi_v$ is a probability distribution on $(\mathbb{R}^d, \mathcal{B}(\mathbb{R}^d))$. Then, the conditional law of $\Xora_t$ given $\Xora_0$, is Gaussian with mean $\mu_{t|0}$ and covariance $\Sigma_{0,t}$ defined by 
    \begin{align}
        \label{eq:def-mu_0t-Sigma_st}
         \mu_{t|0} := \rme^{t A}
        \Xora_0\;,\qquad
        \Sigma_{0,t} := \Sigma_\infty - \rme^{t A} \Sigma_{\infty} (\rme^{tA})^\top,
    \end{align}
    with 
    \begin{align} \label{eq:sigma_infinity}
        \Sigma_{\infty} := \frac{1}{4} \begin{pmatrix}
        5 \varepsilon^2 a^{-1} + a \sigma^2 & - 2 \varepsilon^2 a^{-2} \\
        - 2 \varepsilon^2 a^{-2} & (\varepsilon^2 + a^2 \sigma^2)a^{-3}
    \end{pmatrix} \otimes \Id_d
        \eqsp.
    \end{align}
    The result still holds when the forward process is defined as in \eqref{eq:forward_SDE} by setting $\varepsilon = 0$.
\end{lemma}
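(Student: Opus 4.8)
The plan is to solve the linear SDE \eqref{eq:forward_SDE-eps} explicitly via the variation-of-constants (Duhamel) formula and then read off the conditional law. First I would write, for the solution started at a deterministic point $\Xora_0 = u_0 \in \mathbb{R}^{2d}$,
\begin{equation*}
\Xora_t = \rme^{tA} u_0 + \int_0^t \rme^{(t-s)A}\SigmaEps\,\rmd B_s\eqsp.
\end{equation*}
Since $\rme^{tA}$ is deterministic (its closed form is given in Lemma~\ref{matrix_A_decomposition}, equation \eqref{eq:exp_A_g=0}) and the integrand is deterministic, the Itô integral is a centered Gaussian vector, so $\Xora_t \mid \Xora_0$ is Gaussian with mean $\mu_{t\mid 0} = \rme^{tA}\Xora_0$, which is the first claimed identity. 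By the Itô isometry, its covariance is
\begin{equation*}
\Sigma_{0,t} = \int_0^t \rme^{(t-s)A}\SigmaEps^2(\rme^{(t-s)A})^\top\,\rmd s = \int_0^t \rme^{rA}\SigmaEps^2(\rme^{rA})^\top\,\rmd r\eqsp,
\end{equation*}
using $\SigmaEps\SigmaEps^\top = \SigmaEps^2$ (it is symmetric) and the change of variable $r = t-s$.

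Next I would identify the limiting covariance $\Sigma_\infty = \int_0^\infty \rme^{rA}\SigmaEps^2(\rme^{rA})^\top\,\rmd r$, which is finite because $\|\rme^{rA}\| \le (1+(a+1)^2 r)\rme^{-ra}$ decays exponentially (Lemma~\ref{matrix_A_decomposition}), and recognize it as the unique solution of the Lyapunov equation $A\Sigma_\infty + \Sigma_\infty A^\top + \SigmaEps^2 = 0$. Because all quantities factor through the Kronecker product with $\Id_d$, it suffices to do the computation for the $2\times 2$ blocks: plug the explicit $\rme^{rA_1}$ from \eqref{eq:exp_A_g=0} into the integral, expand the product of the three $2\times 2$ matrices, and integrate the resulting scalar functions of the form $r^k \rme^{-2ar}$ against $\mathrm{d}r$ over $[0,\infty)$ — these are just Gamma-type integrals $\int_0^\infty r^k \rme^{-2ar}\,\rmd r = k!/(2a)^{k+1}$. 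Collecting the entries yields the claimed formula \eqref{eq:sigma_infinity} for $\Sigma_\infty$. Then the finite-time covariance follows from the decomposition
\begin{equation*}
\int_0^t \rme^{rA}\SigmaEps^2(\rme^{rA})^\top\,\rmd r = \int_0^\infty - \int_t^\infty = \Sigma_\infty - \int_0^\infty \rme^{(t+u)A}\SigmaEps^2(\rme^{(t+u)A})^\top\,\rmd u = \Sigma_\infty - \rme^{tA}\Sigma_\infty(\rme^{tA})^\top\eqsp,
\end{equation*}
where the middle step uses the semigroup property $\rme^{(t+u)A} = \rme^{tA}\rme^{uA}$ and pulls the deterministic factors $\rme^{tA}$ and $(\rme^{tA})^\top$ outside the integral. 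This gives exactly \eqref{eq:def-mu_0t-Sigma_st}. Finally, to pass from a deterministic $\Xora_0$ to the stated conditional law under $\Xora_0 \sim \pi_{\rm data}\otimes\pi_v$, I would note that the map $u_0 \mapsto \mathcal{L}(\Xora_t \mid \Xora_0 = u_0)$ is $\mathcal{N}(\rme^{tA}u_0, \Sigma_{0,t})$, and the case $\varepsilon = 0$ is the special case where the $(1,1)$ block of $\SigmaEps^2$ vanishes, so the same derivation applies verbatim.

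The only mildly delicate point — and the main obstacle — is the bookkeeping in the $2\times 2$ matrix integral: one must carefully expand $\rme^{rA_1}\,\mathrm{diag}(\varepsilon^2,\sigma^2)\,(\rme^{rA_1})^\top$ using \eqref{eq:exp_A_g=0}, keep track of the polynomial-in-$r$ prefactors, and integrate term by term, then verify that the resulting symmetric matrix matches \eqref{eq:sigma_infinity}; this is routine but error-prone. Everything else — Gaussianity, the Itô isometry, exponential integrability ensuring convergence of the improper integral, and the Kronecker reduction to dimension one — is standard and requires no new ideas.
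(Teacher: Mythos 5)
Your proposal is correct and follows essentially the same route as the paper: solve the linear SDE by variation of constants, invoke Itô's isometry for the conditional covariance $\Sigma_{0,t}=\int_0^t \rme^{rA}\SigmaEps^2(\rme^{rA})^\top\rmd r$, compute the improper integral entrywise for the $2\times 2$ block to identify $\Sigma_\infty$, and conclude via $\int_0^t = \int_0^\infty - \int_t^\infty$ together with the semigroup property, which is exactly the identity $\rme^{tA}\Sigma_\infty(\rme^{tA})^\top = \Sigma_\infty - \Sigma_{0,t}$ used in the paper. The only part you leave implicit is the explicit expansion of $\rme^{rA_1}\SigmaEps^2(\rme^{rA_1})^\top$ and the resulting Gamma-type integrals, which the paper carries out but which you correctly identify as routine bookkeeping.
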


\begin{proof}
Recall that the forward process $(\Xora_t)_{t \in [0,T]}$ is solution to,
\begin{align}
    \rmd \Xora_t
     = A \Xora_t \rmd t + \SigmaEps\rmd B_t \eqsp.
\end{align}
With initial condition $\Xora_0\sim \pi_{\mathrm{data}}\otimes\pi_v$, we have
    \begin{align*}
        \Xora_t & = 
        \rme^{tA} \Xora_0  +  \int_0^t \rme^{(t-s) A}
        \SigmaEps \rmd B_s \eqsp.
    \end{align*}
    This means that the law of $\Xora_t$, conditional to the initial condition $\Xora_0$ is Gaussian with mean 
    \begin{align*}
        \mu_{t|0} \eqdef \mathbb{E} \left[ \Xora_t \right] = \rme^{tA} \Xora_0 \eqsp,
    \end{align*}
    and covariance 
    \begin{align*}
        \Sigma_{0,t} \eqdef \text{Cov} \left( \Xora_t \right)  &=  \int_0^t \rme^{(t-s)A}\SigmaEps^2 (\rme^{(t-s)A})^\top \rmd s \\
    & =  \int_0^t \rme^{(t-s)A} \SigmaEps^2 (\rme^{(t-s)A})^\top \rmd s \\
     & = \left( \int_0^t \rme^{(t-s)A_1} \SigmaEps^2 (\rme^{(t-s)A_1})^\top \rmd s \right) \otimes \Id_d \eqsp.
    \end{align*}
    Using Lemma \ref{matrix_A_decomposition}, for $\delta \geq 0$,
    \begin{align*}
        \rme^{\delta A_1}\SigmaEps^2
         \rme^{\delta A_1^\top }
         = \rme^{-2a\delta }
         \begin{pmatrix}
             a^4 \sigma^2 \delta ^2 + \varepsilon^2(1 + a  \delta)^2
             &
             \delta \left( a^2 \sigma^2 \l(1-a \delta \r) - \varepsilon^2 (1+ a \delta ) \right)
             \\
             \delta \left( a^2 \sigma^2 \l(1-a \delta \r) - \varepsilon^2 (1+ a \delta ) \right)            &
             \sigma^2 (1-a \delta)^2 + \delta^2 \varepsilon^2
         \end{pmatrix} \eqsp.
    \end{align*}
    Hence, a straightforward computation provides with $\alpha_t =(-\left(5 + 2a t(3 + a t)\right) \varepsilon^2 - a^2 \left(1 + 2a t(1 + a t)\right) \sigma^2)a^{-1}$ and $\gamma_t = 2\left( (\varepsilon + a t \varepsilon)^2 + a^4 t^2 \sigma^2 \right)a^{-2}$, 
    \begin{align}
    \label{eq:prop-Lipschitz-propagation:decomposition:Sigma_0t}
        \begin{split}
            \Sigma_{0,t} 
            =& \frac{1}{4} \begin{pmatrix}
                5 \varepsilon^2a^{-1} + a \sigma^2 & - 2 \varepsilon^2a^{-2} \\
                - 2 \varepsilon^2a^{-2} & (\varepsilon^2 + a^2 \sigma^2)a^{-3}
            \end{pmatrix} \\
            &+  \frac{\rme^{-2at}}{4} \begin{pmatrix}
             \alpha_t  &
             \gamma_t \\
             \gamma_t &
            (-\left(1 + 2a t(1 + a t)\right) \varepsilon^2 - a^2 \left( 1 + 2a t(-1 + a t) \right) \sigma^2)a^{-3}
            \end{pmatrix}
            \\
            =& \Sigma_{\infty} - \rme^{tA} \Sigma_{\infty} (\rme^{tA})^\top \eqsp,
        \end{split}
    \end{align}
    where we used that,
    \begin{align*}
        \rme^{tA} \Sigma_{\infty} (\rme^{tA})^\top = \int_0^\infty \rme^{(t+s)A} \Sigma^2 \left( \rme^{(t+s)A} \right)^\top \rmd s = \int_t^\infty \rme^{\delta A} \Sigma^2 \left( \rme^{\delta A} \right)^\top \rmd \delta 
        = \Sigma_{\infty} - \Sigma_{0,t} \eqsp.
    \end{align*}
 \end{proof}

\begin{lemma} \label{lem:prop-Sigma0t}
The covariance matrix $\Sigma_{0,t}$ defined in~\eqref{eq:def-mu_0t-Sigma_st} satisfies, for all $\varepsilon>0$,
\begin{align*}
    \lambda_{\min}(\Sigma_{0,t}) 
    & 
    \ge \max \bigg\{\frac{\sigma^2}{4} \min\{a, 1/a\} - 
    \left(\frac{\sigma^2}{4} \max\{a, 1/a\} + \frac{5 \varepsilon^2}{4 a} \right) \rme^{-2at}, \\
    &\hspace{5cm}\qquad \min\{\varepsilon^2, \sigma^2\} \frac{1-\rme^{-2a t}}{2a\big(1+(a+1)^2 t\big)^2} \bigg\}\eqsp, \\ 
    \lambda_{\max}(\Sigma_{0,t}) &\le \frac{\sigma^2}{4} \max\{a, 1/a\} + \frac{5 \varepsilon^2}{4 a}.
\end{align*}
\begin{proof}
    First, consider the following decomposition of $\Sigma_\infty$ defined in \eqref{eq:sigma_infinity}
    \begin{align*}
        \Sigma_\infty
        = \frac{1}{4} \begin{pmatrix}
            a \sigma^2 & 0 \\
            0 & \sigma^2a^{-1}
        \end{pmatrix}
        + \frac{\varepsilon^2}{4a^3} \begin{pmatrix}
            5a^2 & - 2a \\
            - 2a & 1
        \end{pmatrix}
        =:\frac{1}{4} \begin{pmatrix}
            a \sigma^2 & 0 \\
            0 & \sigma^2a^{-1}
        \end{pmatrix} + E_\varepsilon\eqsp.
    \end{align*}
    Since $E_\varepsilon$ is positive definite, its trace and determinant are positive, then
    \begin{align}
        \nonumber
        \lambda_{\min}(\Sigma_{\infty})
        &\geq \frac{1}{4}\lambda_{\min}\left(
         \begin{pmatrix}
            a \sigma^2 & 0 \\
            0 & \sigma^2a^{-1}
        \end{pmatrix}
        \right) = \frac{\sigma^2}{4}\min\{a,1/a\}\eqsp,
        \\
        \label{eq:prop-Lipschitz-propagation:bound-SigmaInfty}
        \lambda_{\max}(\Sigma_{\infty})
        &\leq 
        \frac{1}{4}\lambda_{\max}\left(
         \begin{pmatrix}
            a \sigma^2 & 0 \\
            0 & \sigma^2a^{-1}
        \end{pmatrix}
        \right)+ \lambda_{\max}(E_\varepsilon)
        \leq 
        \frac{\sigma^2}{4}\max\{a,1/a\} + \frac{5\varepsilon^2}{4a}\eqsp.
    \end{align}
    Using that $\Sigma_{0,t} = \Sigma_\infty - \rme^{t A} \Sigma_\infty \rme^{t A^\top}$ together with Weyl's inequality we have that 
    \begin{equation*}
        \lambda_{\min}(\Sigma_{0,t}) \ge \lambda_{\min}(\Sigma_\infty) - \lambda_{\max}\big(\rme^{tA} \Sigma_\infty \rme^{t A^\top} \big)\eqsp.
    \end{equation*}
    Note that, as $\Sigma_\infty$ is positive semidefinite,
    $$
    \lambda_{\max}\big(\rme^{tA} \Sigma_\infty \rme^{t A^\top} \big) = \lambda_{\max}\big(\rme^{tA} \Sigma_\infty^{1/2} \big)^2\leq \lambda_{\max}\big(\rme^{tA}\big)^2\lambda_{\max}\big( \Sigma_\infty\big) \leq \rme^{-2at}\lambda_{\max}\big( \Sigma_\infty\big)\eqsp.
    $$
     On the other hand, using that $\Sigma_{0,t} = \int_0^{t} \rme^{sA} \SigmaEps^2 \rme^{sA^\top} \rmd s$, yields
 \begin{align*}
    \Sigma_{0,t} \succcurlyeq \min \left\{ \varepsilon^2 , \sigma^2 \right\} \int_0^t \rme^{sA} \rme^{sA^\top} \rmd s \eqsp,
 \end{align*}
 therefore,
 \begin{align*}
     \lambda_{\min}(\Sigma_{0,t}) & \geq \min \left\{ \varepsilon^2, \sigma^2 \right\} \int_0^t \lambda_{\min} \left( \rme^{sA} \rme^{sA^\top}\right) \rmd s \\
     & \geq \min \left\{ \varepsilon^2, \sigma^2 \right\} \int_0^t \frac{\rme^{-2as}}{(1+(a+1)^2s)^2} \rmd s, \\
     & \geq \min \left\{ \varepsilon^2, \sigma^2 \right\} \frac{1-\rme^{-2at}}{2a(1+(a+1)^2t)^2} \eqsp,
 \end{align*}
 which gives the other lower bound of $\lambda_{\min}(\Sigma_{0,t})$. To obtain the bound on $\lambda_{\max}(\Sigma_{0,t})$, it is enough to note that $\Sigma_{0,t} \preccurlyeq \Sigma_{\infty}$.
\end{proof}

\end{lemma}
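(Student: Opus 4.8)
\textbf{Proof proposal for Lemma~\ref{lem:prop-Sigma0t}.}

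The plan is to exploit the explicit decomposition $\Sigma_\infty = \frac14\,\mathrm{diag}(a\sigma^2,\sigma^2/a)\otimes\Id_d + E_\varepsilon$, where $E_\varepsilon = \frac{\varepsilon^2}{4a^3}\begin{pmatrix}5a^2 & -2a\\ -2a & 1\end{pmatrix}\otimes\Id_d$, together with the two exact identities $\Sigma_{0,t} = \Sigma_\infty - \rme^{tA}\Sigma_\infty\rme^{tA^\top}$ and $\Sigma_{0,t} = \int_0^t \rme^{sA}\SigmaEps^2\rme^{sA^\top}\,\rmd s$ established in Lemma~\ref{prop:convergence-fwd-flow}. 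First I would check that $E_\varepsilon$ is positive definite (its trace $\frac{\varepsilon^2}{4a^3}(5a^2+1)>0$ and determinant $\frac{\varepsilon^4}{16a^6}(5a^2-4a^2)=\frac{\varepsilon^4}{16a^4}>0$ for $\varepsilon>0$). From positivity of $E_\varepsilon$ and Weyl's inequality applied to the sum, $\lambda_{\min}(\Sigma_\infty) \ge \frac14\min\{a\sigma^2,\sigma^2/a\} = \frac{\sigma^2}{4}\min\{a,1/a\}$; similarly $\lambda_{\max}(\Sigma_\infty) \le \frac{\sigma^2}{4}\max\{a,1/a\} + \lambda_{\max}(E_\varepsilon)$, and I would bound $\lambda_{\max}(E_\varepsilon) \le \mathrm{Tr}(E_\varepsilon) = \frac{\varepsilon^2(5a^2+1)}{4a^3} \le \frac{5\varepsilon^2}{4a}$ (valid since $5a^2+1\le 5a^2+5a^2$ requires $a\ge 1/\sqrt5$... actually more directly $1\le 5a^2$ is not free, so I would instead note $\lambda_{\max}(E_\varepsilon)$ is at most the $(1,1)$ entry plus something; cleanest is $\lambda_{\max}(E_\varepsilon)\le \mathrm{Tr}(E_\varepsilon)$ and then observe $\frac{5a^2+1}{4a^3} \le \frac{5}{4a}$ iff $1\le 5a^2$, which may fail—so the safe route is to keep $\lambda_{\max}(E_\varepsilon)\le \frac{5\varepsilon^2}{4a} + \frac{\varepsilon^2}{4a^3}$ or simply bound via Gershgorin: rows give $5a^2+2a$ and $1+2a$ over $4a^3$, so $\lambda_{\max}(E_\varepsilon)\le \frac{\varepsilon^2}{4a^3}\max\{5a^2+2a,\,1+2a\}$; I would adopt whichever matches the stated $\frac{5\varepsilon^2}{4a}$, noting the paper's bound is the intended clean form and tracing the constant accordingly).

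For the first lower bound on $\lambda_{\min}(\Sigma_{0,t})$, I would combine $\Sigma_{0,t} = \Sigma_\infty - \rme^{tA}\Sigma_\infty\rme^{tA^\top}$ with Weyl: $\lambda_{\min}(\Sigma_{0,t}) \ge \lambda_{\min}(\Sigma_\infty) - \lambda_{\max}(\rme^{tA}\Sigma_\infty\rme^{tA^\top})$. Writing $\rme^{tA}\Sigma_\infty\rme^{tA^\top} = (\rme^{tA}\Sigma_\infty^{1/2})(\rme^{tA}\Sigma_\infty^{1/2})^\top$, its largest eigenvalue equals $\|\rme^{tA}\Sigma_\infty^{1/2}\|^2 \le \|\rme^{tA}\|^2\|\Sigma_\infty^{1/2}\|^2 = \|\rme^{tA}\|^2\lambda_{\max}(\Sigma_\infty)$. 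Here I must be slightly careful: Lemma~\ref{matrix_A_decomposition} gives $\|\rme^{tA}\| \le (1+(a+1)^2 t)\rme^{-at}$, which is \emph{not} simply $\rme^{-at}$; so to reproduce the paper's clean factor $\rme^{-2at}$ I would instead use the sharper submultiplicative-over-$\|\cdot\|_1,\|\cdot\|_\infty$ route only where it helps, or—more honestly—note that since $A$ has both eigenvalues equal to $-a$, one has $\lambda_{\max}(\rme^{tA}\Sigma_\infty\rme^{tA^\top}) \le \rme^{-2at}\lambda_{\max}(\Sigma_\infty)$ is \emph{not} generally true for non-normal $A$; the correct statement carries the polynomial prefactor. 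I would therefore present the first lower bound with the understanding that the $\rme^{-2at}$ there implicitly absorbs the (lower-order) polynomial correction, or restate it with $(1+(a+1)^2t)^2\rme^{-2at}$ if strict rigor is wanted; plugging in $\lambda_{\max}(\Sigma_\infty) \le \frac{\sigma^2}{4}\max\{a,1/a\} + \frac{5\varepsilon^2}{4a}$ then gives the first branch of the max.

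For the second lower bound, I would use the integral representation $\Sigma_{0,t} = \int_0^t \rme^{sA}\SigmaEps^2\rme^{sA^\top}\,\rmd s \succcurlyeq \min\{\varepsilon^2,\sigma^2\}\int_0^t \rme^{sA}\rme^{sA^\top}\,\rmd s$, since $\SigmaEps^2 = \mathrm{diag}(\varepsilon^2,\sigma^2)\otimes\Id_d \succcurlyeq \min\{\varepsilon^2,\sigma^2\}\Id_{2d}$. Then $\lambda_{\min}(\Sigma_{0,t}) \ge \min\{\varepsilon^2,\sigma^2\}\int_0^t \lambda_{\min}(\rme^{sA}\rme^{sA^\top})\,\rmd s$, and since $\lambda_{\min}(\rme^{sA}\rme^{sA^\top}) = \sigma_{\min}(\rme^{sA})^2 = \|(\rme^{sA})^{-1}\|^{-2} = \|\rme^{-sA}\|^{-2}$, Lemma~\ref{matrix_A_decomposition} applied with $-a$ (i.e. to $\rme^{-sA}$, which has the analogous bound $\|\rme^{-sA}\|\le (1+(a+1)^2 s)\rme^{as}$) gives $\lambda_{\min}(\rme^{sA}\rme^{sA^\top}) \ge \rme^{-2as}/(1+(a+1)^2 s)^2$. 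Bounding $(1+(a+1)^2 s)^2 \le (1+(a+1)^2 t)^2$ on $[0,t]$ and integrating $\int_0^t \rme^{-2as}\,\rmd s = (1-\rme^{-2at})/(2a)$ yields the second branch. Finally $\lambda_{\max}(\Sigma_{0,t}) \le \lambda_{\max}(\Sigma_\infty)$ follows immediately from $\Sigma_{0,t} = \Sigma_\infty - \rme^{tA}\Sigma_\infty\rme^{tA^\top} \preccurlyeq \Sigma_\infty$ (the subtracted term is positive semidefinite), combined with the earlier bound on $\lambda_{\max}(\Sigma_\infty)$. The main obstacle I anticipate is the bookkeeping around $\|\rme^{tA}\|$: because $A$ is not normal, the naive bound $\|\rme^{tA}\| = \rme^{-at}$ is false, and one must consistently either carry the polynomial prefactor $(1+(a+1)^2 t)$ from Lemma~\ref{matrix_A_decomposition} or argue that the lower-order terms can be absorbed—reconciling this with the clean $\rme^{-2at}$ appearing in the first displayed lower bound is the delicate point.
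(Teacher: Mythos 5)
Your proposal follows essentially the same route as the paper's proof: the same decomposition of $\Sigma_\infty$ into the diagonal part plus $E_\varepsilon$, Weyl's inequality applied to $\Sigma_{0,t}=\Sigma_\infty-\rme^{tA}\Sigma_\infty\rme^{tA^\top}$ for the first lower bound, the integral representation $\Sigma_{0,t}=\int_0^t\rme^{sA}\SigmaEps^2\rme^{sA^\top}\rmd s$ for the second, and $\Sigma_{0,t}\preccurlyeq\Sigma_\infty$ for the upper bound. The two delicate points you flag are genuine and are present in the paper's own argument: the paper asserts $\lambda_{\max}(E_\varepsilon)\le 5\varepsilon^2/(4a)$, whereas a direct computation of the eigenvalues of $E_\varepsilon$ shows $\lambda_{\max}(E_\varepsilon)$ strictly exceeds $5\varepsilon^2/(4a)$ (e.g.\ for $a=1$ one gets $\tfrac{\varepsilon^2}{8}(6+\sqrt{32})>\tfrac{5\varepsilon^2}{4}$), and it bounds $\lambda_{\max}\big(\rme^{tA}\Sigma_\infty\rme^{tA^\top}\big)$ by $\rme^{-2at}\lambda_{\max}(\Sigma_\infty)$ using $\lambda_{\max}(\rme^{tA})^2\le\rme^{-2at}$, which conflates eigenvalues with singular values for the non-normal matrix $\rme^{tA}$ and drops the polynomial prefactor from Lemma~\ref{matrix_A_decomposition}; your more cautious version carrying $\big(1+(a+1)^2t\big)^2$ is the rigorous one.
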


\begin{lemma}[Forward process $\mathcal{W}_2$-contraction] \label{lem:forward_contraction}
The forward process, defined as in \eqref{eq:forward_SDE-eps}, is contractive for the $\mathcal{W}_2$ distance. In particular, it holds that
\begin{align*}
            \mathcal{W}_2\left( \mathcal{L} ( \Xora_T ), \pi_\infty\right) \leq K_T e^{- a T} \Wc_2\left( \pi_{\rm data}\otimes\pi_v,\pi_\infty\right) \eqsp,
\end{align*}
where $\pi_{\infty}$ is the stationary distribution of \eqref{eq:forward_SDE-eps} as defined in Lemma \ref{prop:convergence-fwd-flow} and 
\begin{align*}
            K_T \eqdef \left(1+\mathrm{max}\{a+1;a(a+1)\}T\right) \eqsp.
\end{align*}
\end{lemma}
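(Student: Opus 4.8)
The plan is to use the explicit solution of the linear forward SDE together with a synchronous coupling of two copies of the process started from $\pi_{\rm data}\otimes\pi_v$ and from the stationary distribution $\pi_\infty$. Concretely, let $\Xora_0\sim\pi_{\rm data}\otimes\pi_v$ and $\Xora_0^\infty\sim\pi_\infty$ be an optimal $\mathcal{W}_2$-coupling of these two initial laws, and run both through \eqref{eq:forward_SDE-eps} with the \emph{same} Brownian motion $(B_t)$. Since the SDE is linear, the stochastic integral terms cancel in the difference, and from the variation-of-constants formula in the proof of Lemma~\ref{prop:convergence-fwd-flow} one gets $\Xora_T-\Xora_T^\infty=\rme^{TA}(\Xora_0-\Xora_0^\infty)$. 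Because $\mathcal{L}(\Xora_T^\infty)=\pi_\infty$ (stationarity), this coupling is admissible for $\mathcal{W}_2(\mathcal{L}(\Xora_T),\pi_\infty)$, so
\begin{equation*}
\mathcal{W}_2\left(\mathcal{L}(\Xora_T),\pi_\infty\right)^2 \le \mathbb{E}\big[\|\rme^{TA}(\Xora_0-\Xora_0^\infty)\|^2\big] \le \|\rme^{TA}\|^2\,\mathbb{E}\big[\|\Xora_0-\Xora_0^\infty\|^2\big].
\end{equation*}

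Next I would take the expectation over the optimal coupling of the initial laws: $\mathbb{E}[\|\Xora_0-\Xora_0^\infty\|^2]=\mathcal{W}_2(\pi_{\rm data}\otimes\pi_v,\pi_\infty)^2$. Combining this with the operator-norm bound from Lemma~\ref{matrix_A_decomposition}, namely $\|\rme^{TA}\|\le\big(1+\max\{a+1;a(a+1)\}T\big)\rme^{-aT}=K_T\rme^{-aT}$, and taking square roots yields exactly
\begin{equation*}
\mathcal{W}_2\left(\mathcal{L}(\Xora_T),\pi_\infty\right)\le K_T\,\rme^{-aT}\,\mathcal{W}_2\left(\pi_{\rm data}\otimes\pi_v,\pi_\infty\right),
\end{equation*}
which is the claim. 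One should also verify that $\pi_\infty$ is indeed stationary: this follows because, by Lemma~\ref{prop:convergence-fwd-flow}, starting from a Gaussian with covariance $\Sigma_\infty$ (and appropriate mean, here zero) the time-$t$ covariance is $\rme^{tA}\Sigma_\infty\rme^{tA^\top}+\Sigma_{0,t}=\Sigma_\infty$ by the identity $\Sigma_{0,t}=\Sigma_\infty-\rme^{tA}\Sigma_\infty\rme^{tA^\top}$ established there, and the mean stays zero since $\rme^{tA}0=0$; hence the law is preserved.

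I do not anticipate a serious obstacle here: the argument is essentially the standard synchronous-coupling contraction estimate for a linear SDE, and all the ingredients (the explicit flow, stationarity of $\pi_\infty$, the bound on $\|\rme^{TA}\|$) are already in place from the preceding lemmas. The only mild point requiring care is ensuring the synchronous coupling is measurable and admissible — i.e.\ that the pair $(\Xora_0,\Xora_0^\infty)$ can be chosen to realize the optimal transport cost while being independent of the driving Brownian motion — but this is routine, since $\mathcal{W}_2$ between measures with finite second moment is attained by some coupling (Assumption~\ref{hyp:one-sided_lipschitz} guarantees the needed second moments, and $\pi_\infty$ is Gaussian), and we are free to construct $(B_t)$ independently on the same probability space. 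A secondary remark is that $\rme^{TA}$ acts as $\rme^{TA_1}\otimes\Id_d$, so the operator norm is unchanged by the tensor structure, consistent with Lemma~\ref{matrix_A_decomposition}.
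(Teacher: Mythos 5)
Your proposal is correct and follows essentially the same route as the paper's proof: a synchronous coupling of the two forward processes, the observation that linearity reduces the difference to $\rme^{TA}$ applied to the initial difference, the operator-norm bound $\|\rme^{TA}\|\le K_T\rme^{-aT}$ from Lemma~\ref{matrix_A_decomposition}, and stationarity of $\pi_\infty$. The only cosmetic difference is that you start from an optimal coupling of the initial laws while the paper fixes an arbitrary coupling and takes the infimum at the end; your additional verification of stationarity via the covariance identity is a harmless (and welcome) extra.
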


\begin{proof}
Let $u = (x,v)\in\R^{2d}$ (resp. $\bar u = (\bar x, \bar v)\in\R^{2d}$) and denote by $ (\Xora^{u}_t)_{t \in [0,T]}$ (resp. $(\Xora^{\bar u}_t)_{t \in [0,T]}$) the solution of \eqref{eq:forward_SDE-eps}, with initial condition $ \Xora^{u}_0 = u$ (resp. $\Xora^{\bar u}_0 = \bar u$). By Itô's lemma, 
        \begin{align*}
            \rmd \left( e^{-tA} \Xora^{x,v} \right) & = e^{-tA} \Sigma_{\varepsilon} \rmd B_t \eqsp.
        \end{align*}
    Using a synchronous coupling for $(\Xora^{u}_t)_{t \in [0,T]}$ and $(\Xora^{\bar u}_t)_{t \in [0,T]}$, we have that
    \begin{align*}
        \Xora^{u}_t - \Xora^{\bar u}_t = \rme^{tA} \left( u- \bar u \right)\eqsp.
    \end{align*}
    By definition of the Wasserstein-$2$ distance $ \mathcal{W}_2 ( \mathcal{L} ( \Xora^{u }_t ) , \mathcal{L} ( \Xora^{\bar u}_t ) ) \leq\| \Xora^{u }_t - \Xora^{\bar u}_t\|_{L_2}$. Then, by Lemma~\ref{matrix_A_decomposition},
    \begin{align} \label{eq:L_2_forward}
        \left\| \Xora^{u}_t - \Xora^{\bar u}_t\right\|_{L_2}
            \leq 
            \left\| \rme^{tA} \right\| \left\|u - \bar u\right\|_{L_2} 
            \leq K_t \rme^{-t a} \left\|u -\bar u \right\|_{L_2} \eqsp,
    \end{align}
    with
    \begin{align*}
            K_t \eqdef \left(1+\mathrm{max}\{a+1;a(a+1)\}t\right) \eqsp.
    \end{align*}
    Finally, assume that $\bar u \sim \pi_{\infty}$, $ u \sim \pi_{\rm data} \otimes \pi_v$ and fix any coupling $\gamma \in \Pi (\pi_{\rm data} \otimes \pi_v, \pi_{\infty})$. Using that $\pi_{\infty}$ is stationary distribution of $\Xora_t$ and taking the infimum over $\gamma \in \Pi (\pi_{\rm data} \otimes \pi_v, \pi_{\infty})$ yields,
    \begin{align*}
        \mathcal{W}_2 \left( \mathcal{L} \left( \Xora_T \right) ,\pi_{\infty} \right) \leq K_T e^{- a T} \Wc_2\left( \pi_{\rm data}\otimes\pi_v,\pi_\infty\right)  \eqsp,
    \end{align*}
    which finishes the proof.
\end{proof}

\section{Proof of Theorem~\ref{th:wass:cld}}
\label{sec:proof:cld}

In this section we prove Theorem~\ref{th:wass:cld}. We use  notations from \eqref{eq:euler_backward_CLD} (resp. \eqref{eq:generative_model_CLD}) for the continuous time interpolation of the discretized backward with modified score function $\Xbar_t$ (resp. for the continuous time interpolation of the discretized backward with approximated modified score function $\Xbar^{\theta}_t$). 
We first establish the propagation of Lipschitz regularity, followed by the proof of Theorem~\ref{th:wass:cld}. To do so, we decompose the generation error as the sum of the discretization error (Lemma~\ref{lem:discr_error-modified_score}), the approximation error (Lemma~\ref{lem:error:approx2}), and the mixing time error (Lemma~\ref{lem:error:mixing}). 

\subsection{Propagation of the regularity assumptions}
\begin{proposition}
\label{prop:Lipschitz-propagation}
    Assume that Assumption \ref{hyp:one-sided_lipschitz} holds. Then, for all $t>0$, $\Sigma_{\varepsilon}^2 \nabla\log p_t$ (resp. $\Sigma_{\varepsilon}^2 \nabla\log \tilde{p}_t$) is $L_t$-Lipschitz (resp. $\tilde{L}_t$-Lipschitz): for all $u\in\mathbb{R}^{2d}$,
     \begin{align*}
        \left\| \Sigma^2_{\varepsilon} \nabla^2 \log p_t (u) \right\| \leq L_t
         \eqsp.
    \end{align*}
    Moreover, there exists a constant $C>0$ such that for all $u\in\mathbb{R}^{2d}$,
    \begin{align}
    \label{eq:prop-Lipschitz-propagation:exp-decay}
        \left\|
            \Sigma^2_{\varepsilon} \nabla^2 \log \tilde{p}_t (u)
        \right\|
        &\leq
        \tilde{L}_{t}
        \leq
        C\left(1 + \frac{1}{\sqrt{t}}\right)\rme^{-2at}
        \eqsp.
    \end{align}
\end{proposition}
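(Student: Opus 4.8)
\textbf{Proof strategy for Proposition~\ref{prop:Lipschitz-propagation}.}
The plan is to propagate the one-sided Lipschitz bound on $-\log p_{\rm data}$ forward along the Gaussian kernel of the kinetic OU flow, and then convert this into a genuine Lipschitz bound on the scaled score by exploiting the Gaussian smoothing. First I would recall from Lemma~\ref{prop:convergence-fwd-flow} that $\X_t = \rme^{tA}\X_0 + G_t$ with $G_t\sim\mathcal{N}(0,\Sigma_{0,t})$ independent of $\X_0$, so $p_t$ is the convolution of the pushforward of $\pi_0=\pi_{\rm data}\otimes\pi_v$ under $\rme^{tA}$ with a Gaussian density of covariance $\Sigma_{0,t}$. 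Using Assumption~\ref{hyp:one-sided_lipschitz}, $\pi_0$ decomposes (in the position variable) as a strongly log-concave measure of curvature lower bound $\alpha$ perturbed by an $L$-Lipschitz potential $H$, tensored with the Gaussian $\pi_v$; the velocity marginal is exactly Gaussian. The key analytic tool is that for a measure of the form $\rme^{-(V+H)}$ with $\nabla^2 V\succeq\alpha\Id$ and $H$ $L$-Lipschitz, convolved with $\mathcal{N}(0,\Sigma)$, one gets two-sided control on the Hessian of the log-density: an upper bound of order $\lambda_{\min}(\Sigma)^{-1}$ (from the Gaussian part, via the Brascamp--Lieb / covariance-of-tilted-measure identity $\nabla^2\log p = -\Sigma^{-1}+\Sigma^{-2}\,\mathrm{Cov}(\cdot)$ and a Cramér--Rao/Brascamp--Lieb bound on that covariance), and the negative-curvature side decaying like the contraction factor of $\rme^{tA}$ applied to $\alpha$.

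The main steps, in order, are: (i) write $p_t$ as the above convolution and reduce to bounding $\|\nabla^2\log p_t\|$; (ii) via Weyl/Brascamp--Lieb show $-(C_1\rme^{-2at}\wedge\text{something})\Id \preceq \nabla^2\log p_t \preceq C_2\,\lambda_{\min}(\Sigma_{0,t})^{-1}\Id$, using that the curvature of the pushed-forward strongly-log-concave part is at least $\alpha\,\sigma_{\min}(\rme^{tA})^2$ up to the $L$-Lipschitz perturbation of $H$; (iii) multiply by $\Sigma_\varepsilon^2$ and use $\|\Sigma_\varepsilon^2\,\nabla^2\log p_t\|\le \|\Sigma_\varepsilon\|\cdot\|\Sigma_\varepsilon\,\nabla^2\log p_t\,\Sigma_\varepsilon\|\cdot\|\Sigma_\varepsilon^{-1}\|$ — but more cleanly, sandwich $\Sigma_\varepsilon\,\nabla^2\log p_t\,\Sigma_\varepsilon$ between multiples of $\Sigma_\varepsilon(\Sigma_{0,t})^{-1}\Sigma_\varepsilon$ and use Lemma~\ref{lem:prop-Sigma0t} to see that $\lambda_{\min}(\Sigma_{0,t})\gtrsim (\varepsilon^2\wedge\sigma^2)\,t$ for small $t$ and $\gtrsim \sigma^2$ for $t$ bounded away from $0$, which yields the $(1+1/\sqrt t)$ factor once one also tracks that $\|\Sigma_\varepsilon^2(\Sigma_{0,t})^{-1}\|$ behaves like $1/t$ near $0$ (hence $1/\sqrt t$ after the correct splitting of one factor of $\Sigma_\varepsilon$); (iv) for the modified score $\tilde p_t = p_t/p_\infty$, note $\nabla^2\log\tilde p_t = \nabla^2\log p_t + \Sigma_\infty^{-1}$, and crucially use that as $t\to\infty$ the tilted-measure covariance identity forces $\nabla^2\log p_t \to -\Sigma_\infty^{-1}$ at rate $\rme^{-2at}$ (the contraction rate of $\rme^{tA}$), so the sum decays like $\rme^{-2at}$; combining with the near-$0$ blow-up of order $1/\sqrt t$ gives the stated $C(1+1/\sqrt t)\rme^{-2at}$ bound. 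I would set $\tilde L_t$ to be exactly this quantity and $L_t$ the analogous non-decaying one.

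The hard part will be step (ii)--(iv): obtaining the \emph{exponential decay rate $\rme^{-2at}$} for $\|\Sigma_\varepsilon^2\nabla^2\log\tilde p_t\|$ rather than merely boundedness. This requires showing that the Hessian of $\log p_t$ converges to $-\Sigma_\infty^{-1}$ and quantifying the rate; the natural route is the identity $\nabla\log p_t(u) = \rme^{tA^\top}\Sigma_{0,t}^{-1}\big(\E[\X_0\mid \X_t=u]\rme^{tA}\cdot(\ldots)\big)$ — more precisely writing $p_t$ as a Gaussian times the Laplace transform $\int \rme^{\langle \cdot,\cdot\rangle}\pi_0$, differentiating twice, and controlling the conditional covariance $\mathrm{Cov}(\X_0\mid\X_t=u)$ uniformly in $u$ by a Brascamp--Lieb inequality that uses the strong log-concavity of $V$ (the $H$ part only contributes a bounded multiplicative/additive perturbation since it is Lipschitz, hence $\rme^{\pm\mathrm{osc}}$-type bounds or the Holley--Stroock perturbation principle apply). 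One then propagates: $\nabla^2\log p_t = \rme^{tA^\top}\big(\nabla^2\log q_{0,t}\big)\rme^{tA} - (\text{Gaussian correction})$ and the $\rme^{tA}$ factors supply $\rme^{-2at}$ while $\lambda_{\min}(\Sigma_{0,t})$ stays bounded below for large $t$. Matching the two regimes ($t$ near $0$, where $\lambda_{\min}(\Sigma_{0,t})\asymp t$ forces the $1/\sqrt t$; $t$ large, where the $\rme^{tA}$ contraction dominates) and choosing the constant $C$ uniformly is the delicate bookkeeping, but conceptually it is a standard "regularization of the score by the forward kernel" argument adapted to the matrix-valued, hypoelliptic-but-now-elliptic ($\varepsilon>0$) setting; for $\varepsilon=0$ one checks that the degenerate direction is still smoothed because $\Sigma_{0,t}$ is nondegenerate for every $t>0$ (the commutator $[A,\Sigma]$ spans the missing direction), though then $\lambda_{\min}(\Sigma_{0,t})\asymp t^3$ near $0$ and the bound is adjusted accordingly — but since the proposition is stated for general $\varepsilon\ge0$ via $\Sigma_\varepsilon^2$, I would handle the two cases in parallel, noting the $\varepsilon>0$ case is cleaner.
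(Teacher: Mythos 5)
Your proposal follows essentially the same route as the paper's proof: the convolution representation of $p_t$, the Saumard--Wellner-type identity $\nabla^2\log p_t = \Sigma_{0,t}^{-1}\mathrm{Cov}(q_t(\cdot|u))\Sigma_{0,t}^{-1}-\Sigma_{0,t}^{-1}$ combined with Brascamp--Lieb on the strongly log-concave part, the small-$t$ expansion $\Sigma_{0,t}\sim t\,\Sigma_\varepsilon^2$ producing the $1/\sqrt{t}$ blow-up from the $H$-perturbation cross term, the relation $\nabla^2\log\tilde p_t=\nabla^2\log p_t+\Sigma_\infty^{-1}$, and the $\rme^{-2at}$ decay extracted from the contraction of $\rme^{tA}$ together with $\Sigma_{0,t}\to\Sigma_\infty$. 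The two-regime matching and the piecewise definition of $\tilde L_t$ are also exactly what the paper does.

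One sub-step would fail as you state it: to control the conditional covariance in the presence of the perturbation $H$, you suggest ``$\rme^{\pm\mathrm{osc}}$-type bounds or the Holley--Stroock perturbation principle.'' Both require $H$ to be \emph{bounded}, whereas Assumption \ref{hyp:one-sided_lipschitz} only makes $H$ Lipschitz on all of $\mathbb{R}^d$, so its oscillation is infinite. The paper instead invokes the covariance bound for Lipschitz perturbations of strongly log-concave measures from \cite{brigati2024heatflowlogconcavitylipschitz}, which yields $\mathrm{Cov}\preccurlyeq (L/\kappa+\sqrt{1/\kappa})^2\Id$ with $\kappa$ the convexity of the unperturbed potential; this is precisely the term that generates the $1/\sqrt{t}$ factor near $t=0$ and the $\rme^{-2at}$ rate for large $t$, so the correct perturbation lemma is load-bearing rather than cosmetic. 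Also note that for $\varepsilon=0$ the paper does not carry out the hypoelliptic $\lambda_{\min}(\Sigma_{0,t})\asymp t^3$ analysis you sketch; it obtains the bound on $\Sigma_\varepsilon^2\nabla^2\log p_t$ uniformly in $\varepsilon>0$ and passes to the limit, which is lighter than your parallel treatment.
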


\begin{proof}
\emph{Step 1: Lower bound on $\nabla^2 \log p_t$. }
Recall the following equality in law given by the modified kinetic OU process \eqref{eq:forward_SDE-eps}
\begin{align*}
    \Xora_t & \eqL \rme^{tA} \Xora_0 + \sqrt{\Sigma_{0,t}} G \eqsp,
\end{align*}
with $\Xora_0 \sim \pi_{\rm data}\otimes\pi_v$, $G \sim \mathcal{N}\l( 0 , \Id_{2d} \r)$, where $G$ and $\Xora_0$ are independent, and $\Sigma_{0,t}$ is defined in \eqref{eq:def-mu_0t-Sigma_st}. Writing $q_{t|0}$ the conditional density of $\Xora_t$ given $\Xora_0$, we have
    \begin{align*}
        p_t(u_t) & = \int_{\mathbb{R}^{2d}} p_0(u_0) q_{t|0}(u_t|u_0) \rmd u_0 \\
    & = \int_{\mathbb{R}^{2d}} p_0(u_0) \, \det\left(2\pi\Sigma_{0,t}\right)^{-1/2} 
\exp\!\left( -\frac{1}{2}\left(u_t - \rme^{tA} u_0\right)^\top \Sigma_{0,t}^{-1} \left(u_t - \rme^{tA} u_0\right) \right) 
 \rmd u_0 \\
& = \det\left(\rme^{-tA}\right)
\int_{\mathbb{R}^{2d}} p_0\bigl(\rme^{-tA}z\bigr)
\det\left(2\pi\Sigma_{0,t}\right)^{-1/2}
\exp\!\left( -\frac{1}{2}\Bigl( u_t - z \Bigr)^\top \Sigma_{0,t}^{-1} \Bigl( u_t - z \Bigr) \right)
\rmd z.
\end{align*}
As also observed in \cite[Proposition 7.1]{saumard2014log}, we get
    \begin{align} \label{eq:hess_cov}
         \nabla^2 \log p_t (u) & = \text{Var}(\nabla\phi_{0,t}(Y_0) | Y_0 + Y_1 = u) - \mathbb{E}[\nabla^2\phi_{0,t}(Y_0) | Y_0 + Y_1 = u] \\
         & = \text{Var}(\nabla\phi_{1,t}(Y_1) | Y_0 + Y_1 = u) - \mathbb{E}[\nabla^2\phi_{1,t}(Y_1) | Y_0 + Y_1 = u] \notag
         \eqsp,
    \end{align}
for $Y_0 = \rme^{tA} \Xora_0$ and $Y_1 = \sqrt{\Sigma_{0,t}} G$ and for $\phi_{0,t}$ and $\phi_{1,t}$ such that for all $u\in\mathbb{R}^{2d}$,
\begin{align*}
    \rme^{- \phi_{0,t} (u)}&\eqdef  \det\left(\rme^{-tA}\right)
    p_0\bigl(\rme^{-tA}u\bigr)  \eqsp, \\
    \rme^{- \phi_{1,t} (u)}&\eqdef \det\left(2\pi\Sigma_{0,t}\right)^{-1/2}
    \exp\! \left( -\frac{1}{2}  u^\top \Sigma_{0,t}^{-1}  u  \right)  \eqsp.
\end{align*}
This implies that
\begin{align}
\label{eq:prop-Lipschitz-propagation:majoration}
    \begin{split}
        \nabla^2 \log p_t (u) &\succcurlyeq
        -\mathbb{E}[\nabla^2\phi_{0,t}(Y_0) | Y_0 + Y_1 = u]
        \eqsp,\\
        \nabla^2 \log p_t (u) &\succcurlyeq
        -\mathbb{E}[\nabla^2\phi_{1,t}(Y_1) | Y_0 + Y_1 = u] 
        \eqsp.
    \end{split}
\end{align}
Note that for all $u\in\mathbb{R}^{2d}$,
\begin{align*}
    \nabla^2 \phi_{0,t}(u) & = - \rme^{-tA^\top} \nabla^2 \log p_0 \left( \rme^{-tA} u \right) \rme^{-tA}\eqsp, \\
    \nabla^2 \phi_{1,t}(u) & = \Sigma_{0,t}^{-1}\eqsp.
\end{align*}
From \cite[Lemma 2.2]{bouchut2005uniqueness} together with \eqref{eq:hyp:one-sided_lipschitz}, we get that the one-sided Lipschitz assumption entails the following inequality over the Hessian of the log-density, since $\log p_0(u) = \log \pi_{\rm data}(x) + \log p_v (v)$,
\begin{align*}
    \nabla^2 \left( - \log p_0 \right) (u)
    &=
    \begin{pmatrix}
        -\nabla^2 \log p_{\rm data}(x) & 0 \\
        0 & -\nabla^2 \log p_v(v)
    \end{pmatrix}
    \\&=
    \begin{pmatrix}
        -\nabla^2 \log p_{\rm data}(x) & 0 \\
        0 & v^{-2} \Id_d
    \end{pmatrix}
    \preccurlyeq \max\left\{\constLip, \frac{1}{v^2}\right\} \Id_{2d}.
\end{align*}
Therefore, for $t>0$, from \eqref{eq:prop-Lipschitz-propagation:majoration}, we get
\begin{align*}
    \nabla^2 \log p_t (u) \succcurlyeq - \mathfrak{h}_{t} \Id_{2d} \eqsp,
\end{align*}
where $\mathfrak{h}_{t} = \min \left\{
        \left\|\rme^{-tA}\right\|^2 \max \left\{ \constLip, v^{-2} \right\} ;
        \left\| \Sigma_{0,t}^{-1} \right\|
    \right\}$.

\emph{Bound on $\mathfrak{h}_{t}$. } 
By Lemma \ref{matrix_A_decomposition}, we have that $\left\|\rme^{-tA}\right\|^2\leq \left(1+(a+1)^2 t\right)^2\rme^{2ta}$. Moreover, from Lemma~\ref{lem:prop-Sigma0t}, it follows that
    \begin{align*}
        \left\| \Sigma_{0,t}^{-1} \right\|
        =
        \frac{1}{\lambda_{\min}(\Sigma_{0,t})}
        &\leq 
        \frac{1}{\lfloor \lambda_{\min}(\Sigma_{\infty}) -  \lambda_{\max}(\rme^{tA}\Sigma_{\infty}\rme^{tA^\top})\rfloor_+}\eqsp,
    \end{align*}
    with $\lfloor \cdot \rfloor_+$ denoting the positive part of a real number.
    Therefore,
    $$
     \left\| \Sigma_{0,t}^{-1} \right\| \leq \frac{4}{\lfloor \sigma^2 \min\{a, 1/a\} - 
    \left(\sigma^2 \max\{a, 1/a\} + 5 \varepsilon^2a^{-1} \right) \rme^{-2at}\rfloor_+} =: \mathfrak{h}_{2,t}\eqsp.
    $$
Combining the two previous bounds, we obtain
\begin{align}
\label{eq:prop-Lipschitz-propagation:bound2}
    \mathfrak{h}_{t} \leq \min \left\{
        \mathfrak{h}_{1,t}
        ;
        \mathfrak{h}_{2,t}
    \right\} \eqsp,
\end{align}
where $\mathfrak{h}_{1,t}:= \left(1+(a+1)^2 t\right)^2\rme^{2ta} \max \left\{ \constLip, v^{-2} \right\}$.

\emph{Step 2: Upper bound on $\nabla^2 \log p_t$.}
We first express the conditional density of $\Xora_0$ given $\Xora_t$ as follows
\begin{align} \label{eq:prop-Lipschitz-propagation:def_q_t}
    q_{t|0} ((x_0,v_0)^\top| u_t) \propto  \left( \rme^{- V(x_0) - H(x_0)} \otimes \mathcal{N} (v_0 ;0_d , v^2 \Id_d)  \right) 
    \mathcal{N} (u_t ; e^{tA} (x_0,v_0)^\top , \Sigma_{0,t}) \eqsp.
\end{align}
 First, we consider the log-concave part of the above distribution,
 \begin{align}  \label{eq:prop-Lipschitz-propagation:def_nu_t}
    \nu_t \propto  \left( \rme^{- V(x_0)} \otimes \mathcal{N} (v_0 ;0_d , v^2 \Id_d)  \right) 
    \mathcal{N} (u_t ; e^{tA} (x_0,v_0)^\top , \Sigma_{0,t}) \eqsp.
\end{align}
Since $\nabla^2 { V(x)} \succcurlyeq \alpha \Id_d$ for all $x \in \mathbb{R}^d$, we obtain
\begin{align*}
    \nabla^2 \left(- \log \nu_t\right) \succcurlyeq \rme^{-tA} \begin{pmatrix}
        \alpha \Id_d & 0 \\
        0 & \frac{1}{v^2} \Id_d
    \end{pmatrix} \rme^{-tA^\top}
    + \Sigma_{0,t}^{-1} \eqsp.
\end{align*}
Therefore, by Brascamp–Lieb inequality \cite{BrascampLieb1976},
\begin{align*}
    \mathrm{Cov} (\nu_t) \preccurlyeq \left( \rme^{-tA} \begin{pmatrix}
        \alpha \Id_d & 0 \\
        0 & \frac{1}{v^2} \Id_d
    \end{pmatrix} \rme^{-tA^\top}
    + \Sigma_{0,t}^{-1} \right)^{-1} \eqsp.
\end{align*}
Using the identity $\Sigma_{0,t} = \Sigma_{\infty} - e^{tA} \Sigma_{\infty} e^{tA^T}$ given in Lemma~\ref{prop:convergence-fwd-flow}, we now expand $\Sigma_{0,t}$ at zero as
\[
\Sigma_{0,t} = t
\begin{pmatrix}
\varepsilon^2 & 0 \\
0 & \sigma^2
\end{pmatrix}
+ \mathcal{O}(t^2)\eqsp,
\]
which implies that
\[
\Sigma_{0,t}^{-1} = \frac{1}{t} \underbrace{\begin{pmatrix} 1/\varepsilon^2 & 0 \\ 0 & 1/\sigma^2 \end{pmatrix}}_{= \Sigma_{\varepsilon}^{-1}} + o\!\left(\tfrac{1}{t}\right).
\]
Therefore, the covariance matrix near zero satisfies
\[
\mathrm{Cov} (\nu_t) \preccurlyeq \begin{pmatrix}
\left(\alpha + \frac{1}{\varepsilon^2 t}\right)^{-1} & 0 \\
0 & \left(\frac{1}{v^2} + \frac{1}{\sigma^2 t}\right)^{-1}
\end{pmatrix}
+ o(t)\eqsp.
\]
Next, the Lipschitz perturbation term, following \cite{brigati2024heatflowlogconcavitylipschitz}, can be bounded as
\begin{align*}
 \mathrm{Cov} ( q_t(.|u_t) ) \preccurlyeq \underbrace{\begin{pmatrix}
        \left( \frac{L}{ \alpha + (\epsilon^2 t)^{-1}} + \sqrt{\frac{1}{ \alpha + (\epsilon^2 t)^{-1}}} \right)^2 & 0 \\
        0 &  \left(\frac{1}{v^2} + \frac{1}{\sigma^2 t}\right)^{-1}
    \end{pmatrix}}_{\eqdef M_{\varepsilon,t}} + o(t) \eqsp.
\end{align*}
Using \eqref{eq:hess_cov}, we have
    \begin{align} \label{eq:prop-Lipschitz-propagation:hessian_def}
         \nabla^2 \log p_t (u) 
         & = \Sigma_{0,t}^{-1} \mathrm{Cov} ( q_t(.|u) ) \Sigma_{0,t}^{-1}  - \Sigma_{0,t}^{-1}
         \eqsp,
    \end{align}
so that
\begin{align*}
    \nabla^2 \log p_t (u) & = \left( \frac{1}{t} \Sigma_{\varepsilon}^{-1} + o\!\left(\frac{1}{t}\right) \right)  \left( M_t + o(t) \right) 
    \left( \frac{1}{t} \Sigma_{\varepsilon}^{-1} + o\!\left(\frac{1}{t}\right) \right) -\left( \frac{1}{t} \Sigma_{\varepsilon}^{-1} + o\!\left(\frac{1}{t}\right) \right) \\
    & = \begin{pmatrix}
        \alpha_t & 0 \\
        0 & \beta_t
    \end{pmatrix} + o\!\left(\frac{1}{t}\right) \eqsp,
\end{align*}
with 
$$
\left|\alpha_t\right| \leq \frac{L^2}{(\alpha \epsilon^2 t +1 )^2} + \frac{2L}{(\epsilon^2 t)^{1/2} (\alpha \epsilon^2 t +1 )^{3/2}} - \frac{\alpha}{\alpha \epsilon^2 t +1 }, \qquad
\beta_t \eqdef - \frac{1}{\sigma^2 t + v^2} \eqsp.
$$
Consequently, for all $\epsilon>0$, as $t \to 0^{+}$,
\begin{equation} \label{eq:upper_bound}
  \begin{pmatrix}
        \frac{2L}{\epsilon \sqrt{t}} & 0 \\
        0 & \frac{1}{v^2}
    \end{pmatrix} + o\!\left(\frac{1}{\sqrt{t}}\right)  \leq \nabla^2 \log p_t (u) \leq\begin{pmatrix}
        \frac{2L}{\epsilon \sqrt{t}} & 0 \\
        0 & -\frac{1}{v^2}
    \end{pmatrix} + o\!\left(\frac{1}{\sqrt{t}}\right) \eqsp.
\end{equation}

\emph{Step 3: Uniform bound on $\nabla^2 \log p_t$. }
We now analyze the structure of the minimum in the upper bound of $\mathfrak{h}_{t}$ in \eqref{eq:prop-Lipschitz-propagation:bound2}. We observe that the first term is increasing, equals \( \max \left\{ \constLip, v^{-2} \right\} \) for \( t \to 0 \), and diverges as \( t \to +\infty \). In contrast, the second term is decreasing: it diverges as \( t \to 0 \) and converges to \( 4/(\sigma^2 \min\{ a, 1/a \}) \) as \( t \to +\infty \). 
Therefore, the minimum coincides with the first term, for \( t \leq T_{\rm change} \), and with the second term, for \( t > T_{\rm change} \). Using~\eqref{eq:upper_bound}, we obtain the following uniform bound, for all $\epsilon > 0$,
\begin{align}
\label{eq:prop-Lipschitz-propagation:uniform_bound}
    \left\| \nabla^2 \log p_t(u) \right\| 
    \leq
    \max \left\{\mathfrak{h}_{1,T_{\rm change}}; C t^{-1/2} \right\}
    \eqsp,
    \quad \text{ for } t>0
    \eqsp.
\end{align}
This bound is uniform in $\varepsilon>0$, therefore, for $\varepsilon \to 0$, we have
\begin{align}
\label{eq:prop-Lipschitz-propagation:uniform_bound_with_epsilon}
    \left\| \Sigma^2_{\epsilon} \nabla^2 \log p_t(u) \right\| 
    \leq
    \max \left\{\mathfrak{h}_{1,T_{\rm change}}; C t^{-1/2} \right\}
    \eqsp,
    \quad \text{ for } t>0
    \eqsp.
\end{align}
Since $\tilde{p}_t = p_t/p_\infty$, and $p_\infty$ is the density of a centered Gaussian vector of variance $\Sigma_\infty$, we have
\begin{equation}
\label{eq:modifiedHess}
    \nabla^2 \log \tilde{p}_t (u) =  \nabla^2 \log {p}_t (u) + \Sigma_\infty^{-1} \eqsp.
\end{equation}
Therefore, the same bound as in  \eqref{eq:prop-Lipschitz-propagation:uniform_bound_with_epsilon} holds for the modified score.

\emph{Step 4: Exponential decay of the modified score. }
From \eqref{eq:prop-Lipschitz-propagation:hessian_def}, we have the following equality
\begin{align*}
    \nabla^2 \log \tilde{p}_t (u) = \nabla^2 \log {p}_t (u) + \Sigma_\infty^{-1} &= \Sigma_{0,t}^{-1} \mathrm{Cov} ( q_t(.|u) ) \Sigma_{0,t}^{-1}  - \Sigma_{0,t}^{-1} + \Sigma_\infty^{-1}\eqsp,
\end{align*}
where $q_t$ is defined in \eqref{eq:prop-Lipschitz-propagation:def_q_t}.
By applying Lemma~\ref{technical-lemma:inverse}, together with the decomposition \eqref{eq:def-mu_0t-Sigma_st} and the positivity of the covariance, we obtain
\begin{align*}
    \nabla^2 \log \tilde{p}_t (u) &\succcurlyeq -\Sigma_{0,t}^{-1}\Big(\rme^{tA} \Sigma_{\infty} (\rme^{tA})^\top\Big)\Sigma_\infty^{-1}
    \eqsp.
\end{align*}
Since $\Sigma_{0,t} = \Sigma_{\infty} + \mathcal{O}(e^{-2at})$ as $t \to \infty$, there exists a constant $C>0$ such that
\begin{align*}
    \left\|
        \Sigma_{0,t}^{-1}\Big(\rme^{tA} \Sigma_{\infty} (\rme^{tA})^\top\Big)\Sigma_\infty^{-1}
    \right\|
    &\leq
    C e^{-2at}
\end{align*}

On the other hand, using the fact that $\Sigma_{0,t}^{-1} \succcurlyeq \Sigma_\infty^{-1}$ (see \eqref{eq:def-mu_0t-Sigma_st}), we get
\begin{align*}
    \nabla^2 \log \tilde{p}_t (u) &\preccurlyeq \Sigma_{0,t}^{-1} \mathrm{Cov} ( q_t(.|u) ) \Sigma_{0,t}^{-1}
    \eqsp.
\end{align*}

Following the same steps as in the derivation of the upper bound on $\nabla^2 \log p_t$ (step 2), we obtain
\begin{align*}
    \nabla^2 \left(- \log \nu_t\right) \succcurlyeq \rme^{-tA} \begin{pmatrix}
        \alpha \Id_d & 0 \\
        0 & \frac{1}{v^2} \Id_d
    \end{pmatrix} \rme^{-tA^\top}
    + \Sigma_{0,t}^{-1} \succcurlyeq \rme^{-tA} \begin{pmatrix}
        \alpha \Id_d & 0 \\
        0 & \frac{1}{v^2} \Id_d
    \end{pmatrix} \rme^{-tA^\top} \eqsp,
\end{align*}
where $\nu_t$ is defined in \eqref{eq:prop-Lipschitz-propagation:def_q_t}.
By the Brascamp–Lieb inequality, this implies that $\mathrm{Cov} (\nu_t) = \mathcal{O}(e^{-2at})$.
Next, similarly to "step 2", for the term involving the Lipschitz perturbation, and following \cite[Theorem 1.3]{brigati2024heatflowlogconcavitylipschitz}, we have 
\begin{align*}
    \mathrm{Cov} ( q_t(.|u) ) &\preccurlyeq \left( L Ce^{-2at} + \sqrt{Ce^{-2at}} \right)^2 \Id_d
    \eqsp.
\end{align*}
Therefore, there exist a universal constant $C>0$ and a finite time $T_{\rm change}>0$ such that, for all $t \geq T_{\rm change}$,
\begin{align*}
        \left\|
            \nabla^2 \log \tilde{p}_t (u)
        \right\|
        &\leq
        C \rme^{-2at}
        \eqsp.
    \end{align*}
This implies that the modified score function is $\tilde{L}_t$-Lipschitz, with $\tilde{L}_t$ defined as
\begin{align*}
    \tilde L_t := 
    \begin{cases}
        \max \left\{\mathfrak{h}_{1,T_{\rm change}}; C t^{-1/2} \right\} + \max\{a, 1/a\}
        \eqsp,
        \quad&\text{ for }t\in (0,T_{\rm change}]\eqsp,
        \\
        C\rme^{-2at}
        \eqsp,
        \quad&\text{ for }t\in (T_{\rm change}, +\infty)\eqsp,
    \end{cases}
\end{align*}
which concludes the proof.
\end{proof}

\subsection{Proofs of the main results}
\begin{lemma}[Discretization error] \label{lem:discr_error-modified_score}  
Assume that \ref{hyp:fisher_info} and \ref{hyp:one-sided_lipschitz} hold. 
Then,
for all $\eta>0$ and all $h>0$, there exists a constant $C>0$ such that
\begin{align}
\label{eq:discr_error_lemma:main-bound}
    \mathcal{W}_2 \left(
        \Lc ( \Xola_T ) , \mathcal{L} ( \Xbar_T ) \right) 
    \leq  
    \sqrt{h} C \sqrt{ \left( h \|A\|^2 B^2 + \left\| \Sigma_{\epsilon} \right\|^2 \left( d + \mathcal{I} ( \pi_{\rm data} \otimes \pi_v | \pi_{\infty} ) \right) \right) \frac{\rme^{C a^{-1}}}{a-\eta}} \eqsp, 
\end{align}
with 
\begin{align*}
        B \eqdef \max_{t\in[0,T]}  \big(1 + (a+1)^2(T-t) \big)^2 \rme^{-2a(T-t)} \left\| \Xora_{0} \right\|_{L_2}^2 + \frac{d}{2} \big(\sigma^2 \max\{a, 1/a\} + \frac{5 \varepsilon^2}{a}\big) \eqsp.
\end{align*}\end{lemma}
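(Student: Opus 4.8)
The plan is to bound the $\mathcal{W}_2$ distance by an $L_2$ distance under a synchronous coupling, turn the error into a random ODE, and close a Grönwall estimate that pairs the exponentially stable (but non-normal) linear part $\tilde A_\epsilon$ with the time-dependent Lipschitz bound $\tilde L_t$ of Proposition~\ref{prop:Lipschitz-propagation}. I would first drive $(\Xola_t)$ and $(\Xbar_t)$ with the same Brownian motion and initialise $\Xbar_0=\Xola_0\sim p_T$, so that $\mathcal{W}_2(\mathcal{L}(\Xola_T),\mathcal{L}(\Xbar_T))\le\|\Xola_T-\Xbar_T\|_{L_2}$. Since the diffusion coefficient is the constant matrix $\SigmaEps$ in both \eqref{eq:backward:modified} and \eqref{eq:euler_backward_CLD}, the stochastic integrals cancel and $e_t:=\Xola_t-\Xbar_t$ is absolutely continuous, with derivative $\dot e_t=\tilde A_\epsilon\Xola_t+\SigmaEps^2\tilde\score_{T-t}(\Xola_t)-\tilde A_\epsilon\Xbar_{t_k}-\SigmaEps^2\tilde\score_{T-t_k}(\Xbar_{t_k})$ for $t\in[t_k,t_{k+1}]$.

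Next I would add and subtract $\tilde A_\epsilon\Xbar_t+\SigmaEps^2\tilde\score_{T-t}(\Xbar_t)$, so that $\dot e_t=\tilde A_\epsilon e_t+\SigmaEps^2\big(\tilde\score_{T-t}(\Xola_t)-\tilde\score_{T-t}(\Xbar_t)\big)-\mathcal{E}_t$ with local error $\mathcal{E}_t:=\tilde A_\epsilon(\Xbar_{t_k}-\Xbar_t)+\SigmaEps^2\big(\tilde\score_{T-t_k}(\Xbar_{t_k})-\tilde\score_{T-t}(\Xbar_t)\big)$. Since $e_0=0$, variation of constants gives $e_t=\int_0^t\rme^{(t-s)\tilde A_\epsilon}\big(\SigmaEps^2(\tilde\score_{T-s}(\Xola_s)-\tilde\score_{T-s}(\Xbar_s))-\mathcal{E}_s\big)\rmd s$. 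The Lyapunov identity for the forward OU gives $\tilde A_\epsilon=\Sigma_\infty A^\top\Sigma_\infty^{-1}$, so $\|\rme^{s\tilde A_\epsilon}\|\le\|\Sigma_\infty\|\,\|\Sigma_\infty^{-1}\|\,\|\rme^{sA}\|\le C(1+(a+1)^2s)\rme^{-as}\le C_\eta\rme^{-(a-\eta)s}$ by Lemma~\ref{matrix_A_decomposition}, for any $\eta\in(0,a)$. Using the $\tilde L_{T-s}$-Lipschitz bound of Proposition~\ref{prop:Lipschitz-propagation} for the first term and Grönwall's lemma then yields $\|e_T\|_{L_2}\le C_\eta\rme^{C_\eta\int_0^T\tilde L_{T-s}\rmd s}\int_0^T\rme^{-(a-\eta)(T-s)}\|\mathcal{E}_s\|_{L_2}\rmd s$; splitting the first integral at $T_{\rm change}$ and using $\tilde L_u\le\max\{\mathfrak{h}_{1,T_{\rm change}};Cu^{-1/2}\}$ on $(0,T_{\rm change}]$ and $\tilde L_u\le C\rme^{-2au}$ afterwards bounds it by $C_1+C_2a^{-1}$, which produces the $\rme^{Ca^{-1}}$ factor, while the outer integral contributes $(a-\eta)^{-1}$.

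It then remains to control $\|\mathcal{E}_s\|_{L_2}$. Writing $\Xbar_s-\Xbar_{t_k}=(\tilde A_\epsilon\Xbar_{t_k}+\SigmaEps^2\tilde\score_{T-t_k}(\Xbar_{t_k}))(s-t_k)+\SigmaEps(B_s-B_{t_k})$ and using $\tilde A_\epsilon u+\SigmaEps^2\tilde\score_r(u)=-Au+\SigmaEps^2\score_r(u)$, I would invoke: (i) a uniform second-moment bound $\sup_k\|\Xbar_{t_k}\|_{L_2}^2\lesssim B$, from propagating the explicit Gaussian moments of the true backward process (Lemma~\ref{prop:convergence-fwd-flow}, Lemma~\ref{lem:prop-Sigma0t}) and absorbing the $\mathcal{O}(\sqrt h)$ Euler perturbation; (ii) a score-magnitude bound $\mathbb{E}\|\SigmaEps\nabla\log p_{T-t_k}(\Xbar_{t_k})\|^2\lesssim d+\mathcal{I}(\pi_0|\pi_\infty)$, from $\score_r(u)=\tilde\score_r(u)-\Sigma_\infty^{-1}u$ and the monotonicity of the (weighted) relative Fisher information along the forward flow under Assumption~\ref{hyp:fisher_info}; (iii) the spatial bound $\|\SigmaEps^2(\tilde\score_{T-s}(\Xbar_s)-\tilde\score_{T-s}(\Xbar_{t_k}))\|\le\tilde L_{T-s}\|\Xbar_s-\Xbar_{t_k}\|$; and (iv) a time-regularity bound $\|\SigmaEps^2(\tilde\score_{T-s}-\tilde\score_{T-t_k})(\Xbar_{t_k})\|\le h\sup_r\|\SigmaEps^2\partial_r\nabla\log\tilde p_r(\Xbar_{t_k})\|$, where the supremand is controlled via the HJB/Fokker–Planck equation for $\tilde p_r$, which expresses $\partial_r\nabla\log\tilde p_r$ through $\SigmaEps^2\nabla^2\log\tilde p_r$, $\SigmaEps^2(\nabla\log\tilde p_r)(\nabla\log\tilde p_r)^\top$ and the linear drift, all already bounded. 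Together these give $\|\mathcal{E}_s\|_{L_2}^2\lesssim h^2\|A\|^2B^2+h\|\SigmaEps\|^2\big(d+\mathcal{I}(\pi_0|\pi_\infty)\big)$ up to $\tilde L$-factors; substituting into the Grönwall bound produces \eqref{eq:discr_error_lemma:main-bound}.

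The hard part will be step (iv) together with obtaining the Grönwall constant in the stated shape. Since $\tilde A_\epsilon$ is non-normal, one cannot just differentiate $\|e_t\|^2$ and use $\tilde A_\epsilon+\tilde A_\epsilon^\top\preccurlyeq-2a\,\Id$; this forces the $\eta$-perturbation of the decay rate and the variation-of-constants route. The perturbation $\tilde L_{T-s}$ is singular as $s\uparrow T$, so integrability of $\int_0^T\tilde L_u\rmd u$—i.e.\ Proposition~\ref{prop:Lipschitz-propagation}—is indispensable and is exactly what produces the $\rme^{Ca^{-1}}$ factor. Finally, quantitatively bounding the time-increment of the modified score, $\partial_t\nabla\log\tilde p_t$, is the most delicate estimate, and it is precisely where both finite Fisher information (Assumption~\ref{hyp:fisher_info}) and the propagated regularity are needed.
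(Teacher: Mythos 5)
Your overall architecture (synchronous coupling, exponential stability of $\tilde A_{\epsilon}$, integrability of $\tilde L$ producing the $\rme^{Ca^{-1}}$ factor, local Euler error of order $h^{3/2}$ per step) matches the paper's, and your route to handling the non-normal drift is a legitimate alternative: the similarity identity $\tilde A_{\epsilon}=\Sigma_\infty A^\top\Sigma_\infty^{-1}$ (from the Lyapunov equation $A\Sigma_\infty+\Sigma_\infty A^\top+\SigmaEps^2=0$) does give $\|\rme^{s\tilde A_{\epsilon}}\|\le\kappa(\Sigma_\infty)\|\rme^{sA}\|$, whereas the paper instead constructs a positive definite $\mathfrak{M}$ with $\mathfrak{M}\tilde A_{\epsilon}\preccurlyeq-(a-\eta)\mathfrak{M}$ and runs a discrete Grönwall in the $\|\cdot\|_{\mathfrak{M}}$ norm. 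Either device works for the linear part.

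The genuine gap is your step (iv). You propose to bound the time increment of the score by $h\sup_r\|\SigmaEps^2\partial_r\nabla\log\tilde p_r(\Xbar_{t_k})\|$ and to control the supremand via the Fokker--Planck/HJB equation. But $\nabla\partial_t\log\tilde p_t$ contains the term $\tfrac12\nabla\,\mathrm{div}(\SigmaEps^2\nabla\log\tilde p_t)=\tfrac12\nabla\,\mathrm{Tr}(\SigmaEps^2\nabla^2\log\tilde p_t)$, a \emph{third} derivative of $\log\tilde p_t$, and none of the propagated regularity results (Proposition~\ref{prop:Lipschitz-propagation} controls only the Hessian) gives a pointwise bound on it; your claim that everything in that equation is ``already bounded'' is therefore unjustified, and this is precisely the delicate point you flag without resolving. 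The paper circumvents it entirely: it evaluates the score increment along the \emph{true} backward trajectory, uses the SDE $\rmd Y_t=-\tilde A_{\epsilon}^\top Y_t\,\rmd t+Z_t\SigmaEps\,\rmd B_t$ for $Y_t=\nabla\log\tilde p_{T-t}(\Xola_t)$ (Lemma~\ref{app:lemma:fokker_planck_renorm}, in which the third-derivative terms cancel), and derives via It\^o the increment bound $\E\|Y_t-Y_{t_k}\|^2\le C\,(g(t_{k+1})-g(t_k))$ with $g(t)=\E\|Y_t\|^2$ (Lemma~\ref{lem:control_gradient}); summed over $k$ this telescopes to $g(t_N)$, which converges to $\mathcal{I}(\pi_{\rm data}\otimes\pi_v\,|\,\pi_\infty)$ — this telescoping is both the replacement for your pointwise time-derivative bound and the place where Assumption~\ref{hyp:fisher_info} actually enters. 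A secondary issue: your bounds (ii) and (iv) are stated at the Euler iterate $\Xbar_{t_k}$, whereas Fisher-information monotonicity and the time-reversal identity are available only along the law of $\Xora_{T-t_k}=\Xola_{t_k}$; the paper's decomposition deliberately places the score time-increment on $\Xola$ rather than on $\Xbar$ for this reason.
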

\begin{proof}
Consider a synchronous coupling for $( \Xola_t)_{t \in [0,T]}$ and $(\Xbar_t)_{t \in [0,T]}$, 
\ie, use the same Brownian motion to drive the two processes, with the same initial point, \ie, $\Xola_0 = \Xbar_0$. Then, it holds that
\begin{align*}
    \mathcal{W}_2 \left( \mathcal{L} ( \Xola_T ), \mathcal{L} ( \Xbar_T ) \right) \leq \left\| \Xola_T - \Xbar_T \right\|_{L_2} \eqsp.
\end{align*}
Fix $0 < \Delta < h $ and let $t_N \eqdef T - \Delta$. Note that, for all $0\leq k \leq N-1$, from \eqref{eq:backward:modified} and \eqref{eq:euler_backward_CLD},  
\begin{multline*}
    \Xola_{t_{k+1}} - \Xbar_{t_{k+1}}\\
    = \Xola_{t_{k}} - \Xbar_{t_{k}}   + 
    \int_{t_k}^{t_{k+1}} \left\{\tilde A_{\epsilon} \left( \Xola_{t} - \Xbar_{t_{k}}  \right)  + \Sigma_{\epsilon}^2 \left(
        \tilde\score_{T-t} \left( \Xola_{t}\right) - \tilde\score_{T-t_k} \left( \Xbar_{t_k} \right) 
    \right)\right\}  \rmd t\eqsp,
\end{multline*}
where 
$$
\tilde A_{\epsilon} = - A - \Sigma_{\epsilon}^2 \Sigma_{\infty}^{-1} \eqsp.
$$
From \cite[Lemma 5 and Proposition 4]{monmarche2023almost} and \cite[Lemma 2.6]{achleitner2015large}, there exists a symmetric positive definite matrix $\mathfrak{M} \in \R^{2d\times 2d}$ such that, for any fixed $\eta>0$, we have
\begin{align}
    \label{eq:discr_error_lemma:matrix-bound}
    \mathfrak{M}\tilde A_{\epsilon} \preccurlyeq -(a-\eta) \mathfrak{M}\eqsp.
\end{align}
We then prove contraction with respect to the norm associated with $\mathfrak{M}$ defined, for all $v \in \mathbb{R}^{2d}$, by  $\|v\|^2_{\mathfrak{M}} \eqdef v^\top \mathfrak{M} v $.

For $t\in [t_k, t_{k+1})$, 
\begin{align*}
    \rmd \left( \Xola_t - \Xbar_t \right) = \tilde A_{\epsilon} \left( \Xola_t - \Xbar_{t_k} \right)  \rmd t + \Sigma^2_{\epsilon} \left( \tilde s_{T-t} \left( \Xola_t \right) - \tilde s_{T-t_k} \left( \Xbar_{t_k}\right) \right) \rmd t \eqsp.
\end{align*}
This means that we have 
\begin{align*}
    \rmd \left( \left( \Xola_t - \Xbar_t \right)^\top \mathfrak{M} \left( \Xola_t - \Xbar_t \right) \right) = 2 \left( \Xola_t - \Xbar_t \right)^\top \mathfrak{M} \rmd \left( \Xola_t - \Xbar_t \right) \eqsp.
\end{align*}
It follows that, 
\begin{align*}
    \left\| \Xola_{t_{k+1}} - \Xbar_{t_{k+1}} \right\|^2_\mathfrak{M} &= \left\| \Xola_{t_{k}} - \Xbar_{t_{k}} \right\|^2_\mathfrak{M}  +
    2\int_{t_k}^{t_{k+1}} \left( \Xola_{t} - \Xbar_{t_{k}}  \right)^\top \mathfrak{M}\tilde A_{\epsilon} \left( \Xola_{t} - \Xbar_{t_{k}}  \right) \rmd t
    \\
    &\qquad+
    2\int_{t_k}^{t_{k+1}} \left( \Xola_{t} - \Xbar_{t_{k}}  \right)^\top \mathfrak{M}\Sigma_{\epsilon}^2\left(
        \tilde\score_{T-t} \left( \Xola_{t}\right) - \tilde\score_{T-t_k} \left( \Xbar_{t_k} \right) 
    \right)\rmd t \\
    & \leq 
    \left\| \Xola_{t_{k}} - \Xbar_{t_{k}} \right\|^2_\mathfrak{M} + 2\left(A_{1,k} + A_{2,k} + A_{3,k} + A_{4,k} + A_{5,k}+ A_{6,k}\right)\eqsp,
\end{align*}
where
\begin{align*}
    A_{1,k}
    \eqdef&
    h \left( \Xola_{t_k} - \Xbar_{t_{k}}  \right)^\top \mathfrak{M}\tilde A_{\epsilon} \left( \Xola_{t_k} - \Xbar_{t_{k}}  \right)
    \\
    &\qquad+h
    \left( \Xola_{t_k} - \Xbar_{t_{k}}  \right)^\top\mathfrak{M} \Sigma^2_{\epsilon} \left(
        \tilde\score_{T-t_k} \left( \Xola_{t_k}\right) - \tilde\score_{T-t_k} \left( \Xbar_{t_k} \right) 
    \right)
    \eqsp, 
    \\
    A_{2,k}
    \eqdef &
    \int_{t_k}^{t_{k+1}}\left( \Xola_{t} - \Xola_{t_{k}} \right)^\top \mathfrak{M} \left\{ \tilde A_{\epsilon} \left( \Xola_{t_k} - \Xbar_{t_{k}}  \right)  + \Sigma^2_{\epsilon} \left( \tilde\score_{T-t_k} \left( \Xola_{t_k} \right) - \tilde\score_{T-t_k} \left( \Xbar_{t_k} \right) \right) \right\} \rmd t 
    \eqsp,
    \\
    A_{3,k}
    \eqdef &\int_{t_k}^{t_{k+1}}\left( \Xola_{t} - \Xola_{t_{k}}  \right)^\top \tilde{A}^\top_{\epsilon} \mathfrak{M} 
    \left( \Xola_{t_k} - \Xbar_{t_{k}}  \right) \rmd t 
    \eqsp,
    \\
    A_{4,k}
    \eqdef&
    \left( \Xola_{t_k} - \Xbar_{t_{k}}  \right)^\top
    \mathfrak{M}\Sigma^2_{\epsilon} \int_{t_k}^{t_{k+1}}\left( \tilde\score_{T-t} \left( \Xola_{t} \right) - \tilde\score_{T-t_k} \left( \Xola_{t_k} \right) \right)\rmd t
   \eqsp,
    \\
    A_{5,k}
    \eqdef&
    \int_{t_k}^{t_{k+1}}\left( \Xola_{t} - \Xola_{t_{k}}  \right)^\top\mathfrak{M} \tilde A_{\epsilon} \left( \Xola_{t} - \Xola_{t_{k}}  \right)\rmd t 
    \eqsp,
    \\
    A_{6,k}
    \eqdef&
    \int_{t_k}^{t_{k+1}}\left( \Xola_{t} - \Xola_{t_{k}}  \right)^\top\mathfrak{M} \Sigma^2_{\epsilon} \left( \tilde\score_{T-t} \left( \Xola_{t} \right) - \tilde\score_{T-t_k} \left( \Xola_{t_k} \right) \right) \rmd t 
    \eqsp.
\end{align*}

Next, we bound each term of the above decomposition separately.

\emph{Bound of \( \E[A_{1,k}] \). }
By Assumption \ref{hyp:one-sided_lipschitz}, applying Proposition \ref{prop:Lipschitz-propagation}, there exists a constant $C$ (that depends on the eigenvalues of $\mathfrak{M}$ or constant terms and that may vary from line to line) such that 
\begin{equation*}
    \left( \Xola_{t_k} - \Xbar_{t_{k}}  \right)^\top\mathfrak{M} \Sigma^2_{\epsilon} \left(
        \tilde\score_{T-t_k} \left( \Xola_{t_k}\right) - \tilde\score_{T-t_k} \left( \Xbar_{t_k} \right) 
    \right) 
\leq C \tilde{L}_{T-t_k}\left\| \Xola_{t_k} -\Xbar_{t_k}\right\|_{\mathfrak{M}}^2 \eqsp,
\end{equation*}
and using \eqref{eq:discr_error_lemma:matrix-bound}, 
\begin{align*}
     \left( \Xola_{t_k} - \Xbar_{t_{k}}  \right)^\top \mathfrak{M}\tilde A_{\epsilon} \left( \Xola_{t_k} - \Xbar_{t_{k}}  \right) \leq -(a - \eta)  \left\| \Xola_{t_k} - \Xbar_{t_{k}}  \right\|_{\mathfrak{M}}^2  \eqsp.
\end{align*}
Combining this with \eqref{eq:discr_error_lemma:matrix-bound} yields
\begin{align*}
    \E\left[A_{1,k}\right]
    &\leq h\left(C \tilde{L}_{T-t_k}-(a-\eta) \right)\E\left[\left\| \Xola_{t_k} - \Xbar_{t_{k}}  \right\|_{\mathfrak{M}}^2 \right]
    \eqsp.
\end{align*}
\emph{Bound of \( \E[A_{2,k}] \). }
Using the Cauchy-Schwarz inequality,
\begin{align*}
    \E\left[A_{2,k}\right] \leq&
    \E\left[\left\|\int_{t_k}^{t_{k+1}}\left( \Xola_{t} - \Xola_{t_{k}}  \right)\rmd t\right\|^2\right]^{1/2}
    \\
    &\times
    \E\left[ \left\|\mathfrak{M} \left\{ \tilde A_{\epsilon} \left( \Xola_{t_k} - \Xbar_{t_{k}}  \right)  + \Sigma_{\epsilon}^2 \left( \tilde\score_{T-t_k} \left( \Xola_{t_k} \right) - \tilde\score_{T-t_k} \left( \Xbar_{t_k} \right) \right) \right\}\right\|^2\right]^{1/2}
    \\
    \leq&
    C\E\left[\left\|\int_{t_k}^{t_{k+1}}\left( \Xola_{t} - \Xola_{t_{k}}  \right)\rmd t\right\|^2\right]^{1/2} \E\left[\left\|\sqrt{\mathfrak{M}}\eqsp\left(
        \tilde b_{t_k}\left( \Xola_{t_k} \right) - \tilde b_{t_k}\left( \Xbar_{t_{k}}  \right)\right)\right\|^2\right]^{1/2}
    \eqsp,
\end{align*}
with $\tilde b_t$ the backward drift in \eqref{eq:backward:modified}  defined by $\tilde b_t:u \mapsto \tilde{A}_{\epsilon} u + \Sigma_{\epsilon}^2 \tilde \score_{T-t} ( u )$.
On the one hand, the Cauchy-Schwarz inequality implies
\begin{align*}
    \E\left[\left\|\int_{t_k}^{t_{k+1}}\left( \Xola_{t} - \Xola_{t_{k}}  \right)\rmd t\right\|^2\right]^{1/2}
        &\leq \sqrt{h} \left(
        \int_{t_k}^{t_{k+1}} \E\left[\left\| \Xola_{t} - \Xola_{t_{k}}  \right\|^2\right] \rmd t  \right)^{1/2} \eqsp.
    \end{align*}
    Using the time-reversal property, Lemma \ref{prop:true_backward_bound}, together with Cauchy-Schwarz inequality and Itô's isometry,
    \begin{align}
    \label{eq:discr_error_lemma:bound-delta-U_t}
        \begin{split}
            \E\left[\left\| \Xola_{t} - \Xola_{t_{k}}  \right\|^2\right] 
            &\leq
            \E\left[\left\|
                \int_{T-t}^{T-t_k} 
                A \Xora_{s}\rmd s + \Sigma_{\epsilon} \rmd B_s
            \right\|^2\right] \\
            & \leq C \left( h\|A\|^2  \int^{T-t_k}_{T-t} \E\left[\left\| \Xora_{s} \right\|^2\right] \rmd s
            + hd \left\|\Sigma_{\epsilon}\right\|^2  \right)
            \\
            &\leq C \left( h^2 \|A\|^2 B^2 + hd\left\|\Sigma_{\epsilon}\right\|^2 \right) 
            \eqsp,
        \end{split}
    \end{align}
    where $B$ is defined in \eqref{eq:def-B}. It follows that
    \begin{align*}
    \E\left[\left\|\int_{t_k}^{t_{k+1}}\left( \Xola_{t} - \Xola_{t_{k}}  \right)\rmd t\right\|^2\right]^{1/2}
        &\leq h \sqrt{h} C \left( h \|A\|^2 B^2 + \left\|\Sigma_{\epsilon}\right\|^2d \right)^{1/2} \eqsp.
    \end{align*}
    On the other hand, 
    \begin{align*}
    &\E\left[\left\|\sqrt{\mathfrak{M}}\eqsp\left(
       \tilde b_{t_k}\left( \Xola_{t_k} \right) - \tilde b_{t_k}\left( \Xbar_{t_{k}}  \right)\right)\right\|^2 \right]^{1/2}
        \\
        &\hspace{3cm}\leq
        C\left(\|\tilde A_{\epsilon}\| + \tilde{L}_{T-t_k}\right)
        \E\left[\left\| \Xola_{t_{k}} - \Xbar_{t_{k}} \right\|_\mathfrak{M}^2 \right]^{1/2}
        \eqsp.
    \end{align*}
    Therefore, 
    \begin{align*}
        \E\left[A_{2,k}\right]
        &\leq 
        Ch\sqrt{h}\left( \|\tilde A_{\epsilon}\| + \tilde{L}_{T-t_k}\right)\left( h \|A\|^2 B^2 + \left\| \Sigma_{\epsilon} \right\|^2 d \right)^{1/2}
        \E\left[\left\| \Xola_{t_{k}} - \Xbar_{t_{k}} \right\|^2_\mathfrak{M} \right]^{1/2}
        \\
        &=
        Ch\sqrt{h}\|\tilde A_{\epsilon}\| \left( h \|A\|^2 B^2 + \left\| \Sigma_{\epsilon} \right\|^2 d \right)^{1/2}
        \E\left[\left\| \Xola_{t_{k}} - \Xbar_{t_{k}} \right\|^2_\mathfrak{M} \right]^{1/2}
        \\
        & \qquad \qquad+
        Ch\sqrt{h}\tilde{L}_{T-t_k}\left( h \|A\|^2 B^2 + \left\| \Sigma_{\epsilon} \right\|^2 d \right)^{1/2}
       \E\left[\left\| \Xola_{t_{k}} - \Xbar_{t_{k}} \right\|^2_\mathfrak{M} \right]^{1/2}
        \eqsp.
    \end{align*}
Moreover, from Young's inequality, we get that, for all $a,b \geq 0$ and $\alpha > 0$,
\begin{align} \label{eq:young}
    ab \leq \frac{\alpha}{2} a^2 + \frac{1}{2 \alpha} b^2 \eqsp.
\end{align} 
It follows that, 
    \begin{align*}
        \E\left[A_{2,k}\right]
        &\leq \frac{a-\eta}{6}h\,\E\left[\left\| \Xola_{t_{k}} - \Xbar_{t_{k}} \right\|^2_\mathfrak{M} \right]
        +
        h^2 C
        \frac{ \|\tilde A_{\epsilon}\|^2 \left( h \|A\|^2 B^2 + \left\| \Sigma_{\epsilon} \right\|^2 d \right) }{a-\eta}
        \\
        &+
        Ch\tilde{L}_{T-t_k}
        \E\left[\left\| \Xola_{t_{k}} - \Xbar_{t_{k}} \right\|^2_\mathfrak{M} \right] +
        C h^2
        \tilde{L}_{T-t_k} \left( h \|A\|^2 B^2 + \left\| \Sigma_{\epsilon} \right\|^2 d \right)
        \eqsp.
    \end{align*}

    \emph{Bound of \( \E[A_{3,k}] \).} Using Cauchy-Schwarz inequality, 
    \begin{equation*}
    \E\left[A_{3,k}\right] \leq
    \E\left[\left\|\int_{t_k}^{t_{k+1}}\left( \Xola_{t} - \Xola_{t_{k}}  \right) \rmd t\right\|^2\right]^{1/2} 
    \E \left[ \left\|\tilde A_{\epsilon}^\top \mathfrak{M}  \left( \Xola_{t_k} - \Xbar_{t_{k}}  \right)  \right\|^2\right]^{1/2}
    \eqsp.
    \end{equation*}
    On the one hand, 
    \begin{equation*}
    \E \left[
        \left\|\tilde A_{\epsilon}^\top \mathfrak{M}  \left( \Xola_{t_k} - \Xbar_{t_{k}}  \right)  \right\|^2
    \right]^{1/2}
    \leq 
    C \| \tilde A_{\epsilon} \| \E \left[
        \left\| \Xola_{t_k} - \Xbar_{t_{k}}  \right\|^2
    \right]^{1/2} \eqsp,
    \end{equation*}
    and, on the other hand, using \eqref{eq:discr_error_lemma:bound-delta-U_t} yields,
\begin{equation*} \E\left[\left\|\int_{t_k}^{t_{k+1}}\left( \Xola_{t} - \Xola_{t_{k}}  \right) \rmd t\right\|^2\right]^{1/2} \leq   C h \sqrt{h} \left( h \|A\|^2 B^2 + \left\|\Sigma_{\epsilon}\right\|^2d \right)^{1/2} \eqsp.
\end{equation*}
Combining both and using \eqref{eq:young} yield,
    \begin{align*}
        \E\left[A_{3,k}\right] 
        &\leq
        Ch\sqrt{h} \|\tilde A_{\epsilon}\| \sqrt{h \|A\|^2 B^2 + \left\| \Sigma_{\epsilon} \right\|^2 d}
        \times
        \E\left[\left\| \Xola_{t_{k}} - \Xbar_{t_{k}} \right\|_\mathfrak{M}^2 \right]^{1/2}
        \\
        &\leq
        \frac{a-\eta}{6}h\E\left[\left\| \Xola_{t_{k}} - \Xbar_{t_{k}} \right\|_\mathfrak{M}^2 \right] 
        + h^2C
        \frac{ \| \tilde A_{\epsilon} \|^2
         \left( h \|A\|^2 B^2 + \left\| \Sigma_{\epsilon} \right\|^2 d \right)}{a-\eta}
        \eqsp.
    \end{align*}
    \emph{Bound of \( \E[A_{4,k}] \).} Using Cauchy-Schwarz inequality, 
    \begin{multline*}
        \E\left[A_{4,k}\right] \leq C
        \E\left[
            \left\| \Xola_{t_k} - \Xbar_{t_{k}}  \right\|_\mathfrak{M}^2
        \right]^{1/2}
        \E\left[\left\|
            \int_{t_k}^{t_{k+1}}\Sigma_{\epsilon}^2 \left( \tilde\score_{T-t}  \left( \Xola_{t} \right) - \tilde\score_{T-t_k} \left( \Xola_{t_k} \right) \right)\rmd t
            \right\|^2
        \right]^{1/2}
        \eqsp.
    \end{multline*}
    Therefore, using Cauchy–Schwarz inequality again,
    \begin{multline*}
        \E\left[\left\|
            \int_{t_k}^{t_{k+1}}
            \Sigma_{\epsilon}^2
            \left( \tilde\score_{T-t} \left( \Xola_{t} \right) - \tilde\score_{T-t_k} \left( \Xola_{t_k} \right) \right)\rmd t
            \right\|^2
        \right]^{1/2}
        \\ 
        \leq \sqrt{h} \left(
        \int_{t_k}^{t_{k+1}} \E\left[\left\|
            \Sigma_{\epsilon}^2 \left(\tilde\score_{T-t} \left( \Xola_{t} \right) - \tilde\score_{T-t_k} \left( \Xola_{t_k} \right)\right)
            \right\|^2
        \right] \rmd t  \right)^{1/2}\eqsp.
    \end{multline*}
    Then, using Lemma \ref{lem:control_gradient}, 
    \begin{align} \label{eq:g_function_bound}
         \E\left[\left\|
            \Sigma_{\epsilon}^2 \left(\tilde\score_{T-t} \left( \Xola_{t} \right) - \tilde\score_{T-t_k} \left( \Xola_{t_k} \right)\right)
            \right\|^2
        \right] & \leq \left\| \Sigma_{\epsilon} \right\|^2 \E\left[\left\|\nabla \log\tilde p_{T-t}\left(\Xora_{t}\right) - \nabla \log\tilde p_{T-t_k}\left(\Xora_{t_k}\right)\right\|^2\right] \notag \\
        & \leq C \left\| \Sigma_{\epsilon} \right\|^2 \left( g(t_{k+1}) - g(t_k) \right)
        \eqsp,
    \end{align}
    with the function $g$ defined in \eqref{eq:def:function-g}. This yields
    \begin{align*}
        \E\left[A_{4,k}\right]
        &\leq
        Ch \left\| \Sigma_{\epsilon} \right\| \sqrt{g(t_{k+1})-g(t_{k})}\times
        \E\left[\left\| \Xola_{t_{k}} - \Xbar_{t_{k}} \right\|^2_\mathfrak{M} \right]^{1/2}
        \\
        &\leq h\frac{a-\eta}{6}\E\left[\left\| \Xola_{t_{k}} - \Xbar_{t_{k}} \right\|_\mathfrak{M}^2\right] +
        h \frac{C \left\| \Sigma_{\epsilon} \right\|^2
        }{a-\eta}(g(t_{k+1})-g(t_{k}))
        \eqsp,
    \end{align*}
    where we have used Young's inequality in the last inequality.

    \emph{Bound of \( \E[A_{5,k}] \).} Using Cauchy-Schwarz inequality, 
    \begin{align*}
     \E\left[A_{5,k}\right]&= \int_{t_k}^{t_{k+1}}\E\left[
            \left( \Xola_{t} - \Xola_{t_{k}}  \right)^\top\mathfrak{M} \tilde A_{\epsilon} \left( \Xola_{t} - \Xola_{t_{k}}  \right) \right]\rmd t \\
        &\leq C \left\| \tilde A_{\epsilon} \right\| \mathbb{E} \left[ \int_{t_k}^{t_{k+1}} \left\| \Xola_{s} - \Xola_{t_{k}}  \right\|^2 \rmd s \right] \eqsp.
    \end{align*}
    Then using Lemma \ref{prop:true_backward_bound} as in \eqref{eq:discr_error_lemma:bound-delta-U_t},
    \begin{align*}
        \E\left[A_{5,k}\right] & \leq C h^2 \left\| \tilde A_{\epsilon} \right\| \left( h \|A\|^2 B^2 + \left\|\Sigma_{\epsilon}\right\|^2d \right) \eqsp,
    \end{align*}
    with $B$ defined as in \ref{eq:def-B}.
    \emph{Bound of \( \E[A_{6,k}] \).} Using Cauchy-Schwarz inequality, 
    \begin{align*}
        \E\left[A_{6,k}\right]
        &=
        \int_{t_k}^{t_{k+1}}\E\left[
            \left( \Xola_{t} - \Xola_{t_{k}}  \right)^\top\mathfrak{M}  \Sigma_\epsilon^2 \left( \tilde\score_{T-t} \left( \Xola_{t} \right) - \tilde\score_{T-t_k} \left( \Xola_{t_k} \right) \right)
        \right]\rmd t 
        \\
        &\leq C\int_{t_k}^{t_{k+1}}\E\left[
            \left\| \Xola_{t} - \Xola_{t_{k}}  \right\|^2
        \right]^{1/2}
        \E\left[\left\|
           \Sigma_\epsilon^2 \left( \tilde\score_{T-t} \left( \Xola_{t} \right) - \tilde\score_{T-t_k} \left( \Xola_{t_k} \right) \right)
            \right\|^2
        \right]^{1/2}\rmd t 
        \eqsp.
    \end{align*}
    Controlling the first term as in \eqref{eq:discr_error_lemma:bound-delta-U_t} and the second term as in \eqref{eq:g_function_bound}, using Lemma \ref{lem:control_gradient}, together with Young's inequality, yields, 
    \begin{align*}
        \E\left[A_{6,k}\right]
        \leq
        C h^2
         (a-\eta)\left( h \|A\|^2 B^2 + \left\| \Sigma_{\epsilon} \right\|^2 d \right)
        + 
        Ch \frac{\left\| \Sigma_{\epsilon} \right\|^2 \left( g(t_{k+1}) - g(t_k) \right)}{a-\eta} \eqsp.
    \end{align*}
    \emph{Final bound.} Combining the upper bounds for \( A_{1,k} \), \( A_{2,k} \), \( A_{3,k} \), \( A_{4,k} \), \( A_{5,k} \) and $A_{6,k}$, there exists a constant \( C>0 \) such that
    \begin{align*}
        \E\left[
            \left\| \Xola_{t_{k+1}} - \Xbar_{t_{k+1}}  \right\|_\mathfrak{M}^2
        \right]
        &\leq
        \delta_k
        \E\left[
            \left\| \Xola_{t_{k}} - \Xbar_{t_{k}}  \right\|_\mathfrak{M}^2
        \right] + Ch \frac{\left\| \Sigma_{\epsilon} \right\|^2}{a-\eta} (g(t_{k+1})-g(t_{k}))
        \\
        &\hspace{1cm}+ C h^2 
         \left( h \|A\|^2 B^2 + \left\| \Sigma_{\epsilon} \right\|^2 d \right) \left( (a-\eta) + \frac{a -\eta +2}{a - \eta} \| \tilde A_\epsilon \|\vee \| \tilde A_\epsilon \|^2\right)
        \\
        &\hspace{1cm}+C h^2
        \tilde{L}_{T-t_k} \left( h \|A\|^2 B^2 + \left\| \Sigma_{\epsilon} \right\|^2 d \right)
        \eqsp,
    \end{align*}
    with $\delta_k :=
        1+h(C \tilde{L}_{T-t_k}-(a-\eta)/2 )$.
    Therefore, 
    \begin{align}
    \label{eq:discr_error_lemma:recursive-bound}
    \begin{split}
        &\E\left[
            \left\| \Xola_{t_{N}} - \Xbar_{t_{N}}  \right\|_\mathfrak{M}^2
        \right]
        \leq \left(\prod_{k=0}^{N-1}\delta_k\right)
        \E\left[
            \left\| \Xola_{0} - \Xbar_{0}  \right\|_\mathfrak{M}^2
        \right]
        \\
        &\qquad+ C h^2 
         \left( h \|A\|^2 B^2 + \left\| \Sigma_{\epsilon} \right\|^2 d \right) \left( (a-\eta) + \frac{a -\eta +2}{a - \eta} \| \tilde A_\epsilon \|\vee \| \tilde A_\epsilon \|^2 \right)
        \sum_{k=0}^{N-1}\prod_{j=k+1}^{N-1}\delta_j \\
        & \qquad +  Ch \frac{\left\| \Sigma_{\epsilon} \right\|^2}{a-\eta} \sum_{k=0}^{N-1}(g(t_{k+1})-g(t_{k})) \prod_{j=k+1}^{N-1}\delta_j
        \\
        &\qquad+ 
        C h^2
        \left( h \|A\|^2 B^2 + \left\| \Sigma_{\epsilon} \right\|^2 d \right)
        \sum_{k=0}^{N-1}
        \tilde{L}_{T-t_k}
        \prod_{j=k+1}^{N-1}\delta_j
        \eqsp.
    \end{split}
    \end{align}
    First recall that the two processes share the same initialization, \ie $\Xola_0 = \Xbar_0$. Note that, since $\exp(x)\geq 1+x$, for $x\in\R$, 
    \begin{align*}
        \prod_{j=k+1}^{N-1}\delta_j
        &\leq \exp\l(\sum_{j=k+1}^{N-1}h\left(C \tilde{L}_{T-t_j}-\frac{a-\eta}{2} \right) \r)\\
        &\leq \exp\l(-\frac{a-\eta}{2} h(N-k-1) +C\sum_{j=k+1}^{N-1}h  \tilde{L}_{T-t_j} \r)
        \\ &
        \leq
        \exp\l(-\frac{a-\eta}{2} h(N-k-1)
        + C\int_{0}^\infty\tilde{L}_{s}\rmd s\r)\\
        &\leq \exp\l(-\frac{a-\eta}{2} h(N-k-1) +Ca^{-1} \r)
        \eqsp,
    \end{align*}
    
     where we use the bound \eqref{eq:prop-Lipschitz-propagation:exp-decay} from Proposition \ref{prop:Lipschitz-propagation}.
    Combining this bound with the fact that $h\leq(1-\rme^{-h})\rme^h$, we obtain
    \begin{align*}
        h\exp\left(-\frac{a-\eta}{2}(N-k)h\right)
        &\leq \frac{2\rme^h}{a-\eta}\left(
            \exp\left(-\frac{a-\eta}{2}(N-k)h\right) - \exp\left(-\frac{a-\eta}{2}(N-k+1)h\right)
        \right)
        \eqsp,
    \end{align*}
    which then implies that
    \begin{align*}
        h\sum_{k=0}^{N-1}\prod_{j=k+1}^{N-1}\delta_j
        &\leq
        h\sum_{k=0}^{N-1}
        \rme^{-\frac{a-\eta}{2}(N-k-2)h}
        \times
        \rme^{C a^{-1}}\\
        &\leq
        \frac{2\rme^h}{a-\eta} \sum_{k=0}^{N-1} \left(
            \rme^{-\frac{a-\eta}{2}(N-k-2)h} - \rme^{-\frac{a-\eta}{2}(N-k-1)h}
        \right)
        \times
        \rme^{C a^{-1}}
        \leq
        \frac{C\rme^{C a^{-1}}}{a-\eta}
        \eqsp,
    \end{align*}
    increasing the value of the constant \( C \) if necessary.
    For the term involving $\tilde{L}_{T-t_k}$, note that 
    \begin{align*}
        &h\tilde{L}_{T-t_k}\exp(-\frac{a-\eta}{2}(N-k)h)
        \\
        &\leq
        h\frac{C}{\sqrt{(N-k)h}}\exp(-\frac{a-\eta}{2}(N-k)h)
        \\
        &\leq \frac{2\rme^h}{a-\eta}\left(
            \Gamma\left(\frac{1}{2},
            \frac{a-\eta}{2}(N-k)h
            \right)-
            \Gamma\left(\frac{1}{2},
            \frac{a-\eta}{2}(N-k+1)h
            \right)
        \right)
        \eqsp,
    \end{align*}
    where $\Gamma(a, b)$ denotes the Gamma function.
    Consequently,
    \begin{align*}
        &h \sum_{k=0}^{N-1}
        \tilde{L}_{T-t_k}
        \prod_{j=k+1}^{N-1}\delta_j \\
        &\leq
        \frac{2\rme^h}{a-\eta} \sum_{k=0}^{N-1} \left(
            \Gamma\left(\frac{1}{2},
            \frac{a-\eta}{2}(N-k-2)h
            \right)-
            \Gamma\left(\frac{1}{2},
            \frac{a-\eta}{2}(N-k-1)h
            \right)
        \right)
        \times
        \rme^{C a^{-1}} \\
        &\leq
        \frac{C\rme^{C a^{-1}}}{a-\eta}
        \eqsp.
    \end{align*}
    Moreover, we have that
    \begin{align*}
        \sum_{k=0}^{N-1}(g(t_{k+1})-g(t_{k}))\prod_{j=k+1}^{N-1}\delta_j
        &\leq
        \rme^{C a^{-1}}
        \sum_{k=0}^{N-1}(g(t_{k+1})-g(t_{k}))
        \leq 
        \rme^{C a^{-1}}g(t_N)
        \\
        &\leq C\rme^{C a^{-1}}\E\l[\l\|\tilde\score_{\Delta}\l(\Xora_\Delta\r)\r\|^2\r] \eqsp.
    \end{align*}
Note that $\E[\|\tilde\score_\Delta(\Xora_\Delta)\|^2]$ corresponds to the relative Fisher information between $p_\Delta$ and $ \pi_\infty$.  We can conclude for $\Delta \to 0$, following the argument of \cite[Lemma 3.9]{conforti2025kl} and using Assumption \ref{hyp:fisher_info}, that 
$\mathcal{I} ( \pi_{\rm data} \otimes \pi_v | \pi_{\infty} ) = \E[\|\tilde\score_0 (\Xora_0)\|^2] < \infty$. Then, applying \eqref{eq:discr_error_lemma:recursive-bound} directly yields
    \begin{align*}
        &\E\left[
            \left\| \Xola_{t_{N}} - \Xbar_{t_{N}}  \right\|_\mathfrak{M}^2
        \right]
        \leq
        h \times C \left( h \|A\|^2 B^2 + \left\| \Sigma_{\epsilon} \right\|^2 \left( d + \mathcal{I} ( \pi_{\rm data} \otimes \pi_v | \pi_{\infty} ) \right) \right) \frac{\rme^{C a^{-1}}}{a-\eta}
        \eqsp,
    \end{align*}
    which concludes the proof.
\end{proof}

\begin{lemma}[Approximation error]
\label{lem:error:approx2}
   Assume that Assumptions \ref{hyp:one-sided_lipschitz} and \ref{hyp:sup_approx} hold. Then, for any $\eta>0$, there exists a constant \( C >0 \) such that
   \begin{align}
    \label{eq:approx-error:main-bound}
      \mathcal{W}_2 \left(\Lc\l( \Xbar^\infty_T\r) ,\Lc\l(\Xbar^\theta_T\r) \right) \leq   C
      \frac{\left\| \Sigma_{\epsilon} \right\|^2}{a-\eta} M \eqsp.
   \end{align}
\end{lemma}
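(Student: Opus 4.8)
The plan is to compare the two Euler interpolations $(\Xbar^\infty_t)_{t\in[0,T]}$ and $(\Xbar^\theta_t)_{t\in[0,T]}$ through a synchronous coupling: drive both by the same Brownian motion and use the common initialization $\Xbar^\infty_0=\Xbar^\theta_0\sim\pi_\infty$, so that $\mathcal{W}_2(\Lc(\Xbar^\infty_T),\Lc(\Xbar^\theta_T))\le\|\Xbar^\infty_T-\Xbar^\theta_T\|_{L_2}$. Writing $e_t\eqdef\Xbar^\infty_t-\Xbar^\theta_t$, the Brownian increments cancel and, from \eqref{eq:euler_backward_CLD}--\eqref{eq:generative_model_CLD}, $e$ obeys the piecewise–affine recursion
$$ e_{t_{k+1}} = e_{t_k} + h\,\tilde A_{\epsilon} e_{t_k} + h\,\SigmaEps^2\Big(\tilde\score_{T-t_k}(\Xbar^\infty_{t_k}) - \tilde s_{\theta}(T-t_k,\Xbar^\theta_{t_k})\Big), \qquad e_0 = 0. $$
I would split the score difference as $\big(\tilde\score_{T-t_k}(\Xbar^\infty_{t_k})-\tilde\score_{T-t_k}(\Xbar^\theta_{t_k})\big)+\big(\tilde\score_{T-t_k}(\Xbar^\theta_{t_k})-\tilde s_{\theta}(T-t_k,\Xbar^\theta_{t_k})\big)$: the first part is controlled after multiplication by $\SigmaEps^2$ by the Lipschitz bound of Proposition~\ref{prop:Lipschitz-propagation} (contributing $\le C\tilde L_{T-t_k}\|e_{t_k}\|$), and the second is the approximation residual, bounded in $L_2$ by $M$ via Assumption~\ref{hyp:sup_approx}, so that $\|\SigmaEps^2(\cdots)\|_{L_2}\le\|\SigmaEps\|^2 M$. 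Note that here, unlike in Lemma~\ref{lem:discr_error-modified_score}, no truncation at $T-\Delta$ is needed: the score is evaluated only at times $T-t_k\in\{h,\dots,T\}$, the residual bound is uniform over these indices, and no Fisher-information term appears.

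Next I would run the Lyapunov computation of Lemma~\ref{lem:discr_error-modified_score}. Let $\mathfrak{M}$ be the positive definite matrix from \eqref{eq:discr_error_lemma:matrix-bound}, so $\mathfrak{M}\tilde A_{\epsilon}\preccurlyeq-(a-\eta)\mathfrak{M}$, and expand $\|e_{t_{k+1}}\|_{\mathfrak{M}}^2=\|e_{t_k}\|_{\mathfrak{M}}^2+2h\,e_{t_k}^\top\mathfrak{M}d_k+h^2\|d_k\|_{\mathfrak{M}}^2$ with $d_k$ the drift increment. The linear term yields $-(a-\eta)\|e_{t_k}\|_{\mathfrak{M}}^2+C\tilde L_{T-t_k}\|e_{t_k}\|_{\mathfrak{M}}^2$ plus a cross term with the residual which Young's inequality absorbs as $\tfrac{a-\eta}{4}\|e_{t_k}\|_{\mathfrak{M}}^2+\tfrac{C}{a-\eta}\|\SigmaEps\|^4 M^2$; the $h^2$ term contributes $O(h^2)\big((\|\tilde A_{\epsilon}\|^2+\tilde L_{T-t_k}^2)\|e_{t_k}\|_{\mathfrak{M}}^2+\|\SigmaEps\|^4 M^2\big)$, which is lower order. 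Taking expectations gives a recursion $u_{k+1}\le\delta_k u_k+Ch\,\|\SigmaEps\|^4 M^2/(a-\eta)$ for $u_k\eqdef\E[\|e_{t_k}\|_{\mathfrak{M}}^2]$, with $\delta_k\le\exp\!\big(h(C\tilde L_{T-t_k}-(a-\eta)/4)\big)$.

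Unrolling with $u_0=0$ gives $u_N\le Ch\frac{\|\SigmaEps\|^4 M^2}{a-\eta}\sum_{k=0}^{N-1}\prod_{j=k+1}^{N-1}\delta_j$. Exactly as in the proof of Lemma~\ref{lem:discr_error-modified_score}, the exponential decay \eqref{eq:prop-Lipschitz-propagation:exp-decay} makes $\int_0^\infty\tilde L_s\,\rmd s$ finite, hence $\prod_{j=k+1}^{N-1}\delta_j\le C\rme^{-\frac{a-\eta}{4}h(N-k-1)}$ and, by summing the geometric-type series, $h\sum_{k}\prod_{j>k}\delta_j\le C/(a-\eta)$. Therefore $u_N\le C\|\SigmaEps\|^4 M^2/(a-\eta)^2$, and by equivalence of $\|\cdot\|_{\mathfrak{M}}$ with the Euclidean norm,
$$ \mathcal{W}_2\big(\Lc(\Xbar^\infty_T),\Lc(\Xbar^\theta_T)\big)\le\|e_T\|_{L_2}\le C\sqrt{u_N}\le C\,\frac{\|\SigmaEps\|^2}{a-\eta}\,M, $$
which is the claimed bound. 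The main obstacle is the same as in Lemma~\ref{lem:discr_error-modified_score}: carefully verifying that all constants depend only on $\eta$ and the time-uniform regularity bounds, and that the time-integrated Lipschitz constant $\int_0^\infty\tilde L_s\,\rmd s$ — whose finiteness rests on Proposition~\ref{prop:Lipschitz-propagation} — keeps $\prod_j\delta_j$ bounded despite the $t^{-1/2}$ blow-up of $\tilde L_t$ near $t=0$.
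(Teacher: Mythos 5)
Your proposal is correct and follows essentially the same route as the paper's proof: synchronous coupling with common initialization, the $\mathfrak{M}$-norm Lyapunov expansion with $\mathfrak{M}\tilde A_{\epsilon}\preccurlyeq-(a-\eta)\mathfrak{M}$, splitting the score difference into a Lipschitz part (Proposition~\ref{prop:Lipschitz-propagation}) and the residual controlled by \ref{hyp:sup_approx}, Young's inequality, and the same geometric summation via $\int_0^\infty\tilde L_s\,\rmd s<\infty$. The only differences are cosmetic: you keep the quadratic $h^2\|d_k\|_{\mathfrak{M}}^2$ term explicitly (the paper drops it) and you correctly observe that the $\Delta\to 0$ truncation used in the paper is not actually needed here.
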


\begin{proof}
    As in the proof of Lemma \ref{lem:discr_error-modified_score}, consider the synchronous coupling of the two processes \( \Xbar^\infty \) and \( \Xbar^\theta \) with the same initial condition \( \Xbar_0^\infty = \Xbar_0^\theta \). We have
    \begin{align*}
        \mathcal{W}_2 \left(\Lc\l( \Xbar^\infty_T\r) ,\Lc\l(\Xbar^\theta_T\r) \right) & \leq \left\| \Xbar^\infty_T - \Xbar^\theta_T \right\|_{L_2} \eqsp.
    \end{align*}
    Fix $\Delta \geq 0$ such that $t_N = T - \Delta$ and note that for all $0\leq k \leq N-1$, from \eqref{eq:euler_backward_CLD} and \eqref{eq:generative_model_CLD}, we get
    \begin{align*}
        &\Xbar^\infty_{t_{k+1}} - \Xbar^\theta_{t_{k+1}}\\
        &= \Xbar^\infty_{t_{k}} - \Xbar^\theta_{t_{k}}   + 
       \int_{t_{k}}^{t_{k+1}} \left\{\tilde A_{\epsilon} \left( \Xbar^\infty_{t_{k}} - \Xbar^\theta_{t_{k}}  \right)  + \Sigma^2_\epsilon \left(
            \tilde\score_{T-t_k} \left( \Xbar^\infty_{t_{k}}\right) - \tilde s_{\theta} \left( T-t_k, \Xbar^\theta_{t_{k}} \right) 
        \right)\right\}
        \rmd t
        \eqsp.
    \end{align*}
    Taking $\mathfrak{M}$ as in the proof of Lemma \ref{lem:discr_error-modified_score}, we have
    \begin{align*}
        &\left\| \Xbar^\infty_{t_{k+1}} - \Xbar^\theta_{t_{k+1}} \right\|^2_\mathfrak{M} 
        = \left\| \Xbar^\infty_{t_{k}} - \Xbar^\theta_{t_{k}} \right\|^2_\mathfrak{M}   + 2 B_{1,k} + 2 B_{2,k}
        \eqsp,
    \end{align*}
    with
    \begin{align*}
        B_{1,k} &= h \left( \Xbar^\infty_{t_k} - \Xbar^\theta_{t_k}  \right)^\top \mathfrak{M}\tilde A_{\epsilon} \left( \Xbar^\infty_{t_k} - \Xbar^\theta_{t_k}  \right)\eqsp,
        \\
        B_{2,k} &=h \left( \Xbar^\infty_{t_k} - \Xbar^\theta_{t_k}  \right)^\top \mathfrak{M}\Sigma^2_\epsilon \left(
            \tilde\score_{T-t_k} \left( \Xbar^\infty_{t_{k}}\right) - \tilde s_{\theta} \left( T-t_k, \Xbar^\theta_{t_{k}} \right) 
        \right)
        \eqsp.
    \end{align*}

    \emph{Bound of \( \E[B_{1,k}] \).}
    From \eqref{eq:discr_error_lemma:matrix-bound}, we note that
    \begin{align*}
        \E\left[B_{1,k}\right]
        &\leq -h\left(a-\eta \right)\E\left[\left\| \Xbar^\infty_{t_k} - \Xbar^\theta_{t_k}  \right\|^2_\mathfrak{M} \right]
        \eqsp.
    \end{align*}

    \emph{Bound of \( \E[B_{2,k}] \).}
    We decompose the second term into the score and approximation components:
       \begin{align*}
        &\E\left[B_{2,k}\right] \\
        & = h \E \left[ \left( \Xbar^\infty_{t_k} - \Xbar^\theta_{t_k}  \right)^\top \mathfrak{M}\Sigma^2_\epsilon \left(
            \tilde\score_{T-t_k} \left( \Xbar^\infty_{t_{k}}\right) - \tilde\score_{T-t_k} \left( \Xbar^\theta_{t_{k}} \right)
        \right) \right] \\
        & \qquad + h \E \left[ \left( \Xbar^\infty_{t_k} - \Xbar^\theta_{t_k}  \right)^\top \mathfrak{M}\Sigma^2_\epsilon \left(
            \tilde\score_{T-t_k} \left( \Xbar^\theta_{t_{k}} \right) - \tilde s_{\theta} \left( T-t_k, \Xbar^\theta_{t_{k}} \right) 
        \right) \right] \\
        & \leq h C \tilde L_{T-t_k} \E\left[\left\| \Xbar^\infty_{t_k} - \Xbar^\theta_{t_k}  \right\|^2_\mathfrak{M} \right] + h \left\| \Sigma_{\epsilon} \right\| M \E\left[\left\| \Xbar^\infty_{t_k} - \Xbar^\theta_{t_k}  \right\|_\mathfrak{M} \right] \\
        & \leq  h C \tilde L_{T-t_k} \E\left[\left\| \Xbar^\infty_{t_k} - \Xbar^\theta_{t_k}  \right\|^2_\mathfrak{M} \right] + h \frac{a - \eta}{2} \E\left[\left\| \Xbar^\infty_{t_k} - \Xbar^\theta_{t_k}  \right\|_\mathfrak{M}^2 \right] + h C \left\| \Sigma_{\epsilon} \right\|^4 M^2 \eqsp,
    \end{align*}
    where we have used Young's inequality in the last inequality,
    for $C>0$ a universal constant (which may change from line to line) depending only on the eigenvalues of the matrix \( \mathfrak{M} \) or constant factors.
    \emph{Final bound.} Combining the bounds on \( B_{1,k} \) and \( B_{2,k} \), there exists a constant \( C>0 \) such that
    \begin{align*}
        &\E\left[
            \left\| \Xbar^\infty_{t_{k+1}} - \Xbar^\theta_{t_{k+1}} \right\|^2_\mathfrak{M} 
        \right] \leq
        \delta_k
        \E\left[
            \left\| \Xbar^\infty_{t_{k}} - \Xbar^\theta_{t_{k}} \right\|^2_\mathfrak{M}
        \right] + h\frac{C}{a-\eta} \left\| \Sigma_{\epsilon} \right\|^4 M^2 \eqsp,
    \end{align*}
    with $\delta_k :=  1 +h \left( C \tilde L_{T-t_k} -  (a-\eta)/2 \right)$. Therefore, we have
    \begin{align*}
        &\E\left[
            \left\| \Xbar^\infty_{t_{N}} - \Xbar^\theta_{t_{N}} \right\|^2_\mathfrak{M} 
        \right]
        \\
        &\leq \prod_{j=k+1}^{N-1}\delta_j 
        \E\left[
            \left\| \Xbar^\infty_{0} - \Xbar^\theta_{0} \right\|^2_\mathfrak{M}
        \right] + 
        h\frac{C}{a-\eta} \left\| \Sigma_{\epsilon} \right\|^4 M^2 
        \sum_{k=0}^{N-1}\prod_{j=k+1}^{N-1}\delta_j
        \\
         &\leq 
        h\frac{C}{a-\eta} \left\| \Sigma_{\epsilon} \right\|^4 M^2 
        \sum_{k=0}^{N-1}\prod_{j=k+1}^{N-1}\delta_j \eqsp,
    \end{align*}
    where we used that $\Xbar_0^\infty = \Xbar_0^\theta$. 
    Following the same argument as in Lemma \ref{lem:discr_error-modified_score} (discretization error term), we obtain
    \begin{align*}
        &\E\left[
            \left\| \Xbar^\infty_{t_{N}} - \Xbar^\theta_{t_{N}} \right\|^2_\mathfrak{M}
        \right]
        \leq
        C
        \frac{\left\| \Sigma_{\epsilon} \right\|^4 M^2}{(a-\eta)^2}
        \eqsp.
    \end{align*}
    We conclude the proof by taking the limit as $\Delta \to 0$ together with Fatou's lemma.
\end{proof}

\begin{lemma}[Mixing time]
\label{lem:error:mixing}
    Assume that \ref{hyp:one-sided_lipschitz} holds. Then, for all $\eta>0$, there exists a constant $C>0$ such that
    \begin{align}
        \label{eq:mixing-time}
        \mathcal{W}_2 \left( \mathcal{L} \left( \Xbar_T \right) , \mathcal{L} \left( \Xbar_T^{\infty} \right) \right) & \leq C \rme^{C a^{-1} }\times T e^{- \frac{3}{2}(a-\eta) T} \Wc_2\left( \pi_{\rm data}\otimes\pi_v,\pi_\infty\right)
        \eqsp.
    \end{align}
\end{lemma}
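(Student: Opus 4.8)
The plan is to couple $(\Xbar_t)_{t\in[0,T]}$ and $(\Xbar^\infty_t)_{t\in[0,T]}$ synchronously in \eqref{eq:euler_backward_CLD} (same Brownian motion), with the initial pair realizing an optimal $\mathcal{W}_2$-coupling of $p_T$ and $\pi_\infty$, so that $\mathcal{W}_2(\mathcal{L}(\Xbar_T),\mathcal{L}(\Xbar^\infty_T))\leq \|\Xbar_T-\Xbar^\infty_T\|_{L_2}$ and $\|\Xbar_0-\Xbar^\infty_0\|_{L_2}=\mathcal{W}_2(p_T,\pi_\infty)$. Setting $v_k:=\Xbar_{t_k}-\Xbar^\infty_{t_k}$ and subtracting the two recursions \eqref{eq:euler_backward_CLD}, the Brownian increments cancel and an integral mean-value expansion of $\tilde\score_{T-t_k}$ gives the exact linear recursion
\begin{equation*}
    v_{k+1}=\big(\Id_{2d}+h\tilde A_\epsilon+h\SigmaEps^2\bar G_k\big)v_k\eqsp,\qquad \bar G_k:=\int_0^1\nabla^2\log\tilde p_{T-t_k}\big(\Xbar^\infty_{t_k}+\gamma v_k\big)\,\rmd\gamma\eqsp,
\end{equation*}
with $\|\SigmaEps^2\bar G_k\|\leq\tilde L_{T-t_k}$ by Proposition~\ref{prop:Lipschitz-propagation}. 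As in the neighbouring lemmas I would first fix $\Delta>0$ with $t_N=T-\Delta$ and let $\Delta\to0$ at the end (Fatou).

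I would then prove an $\mathfrak{M}$-weighted contraction, reusing the symmetric positive definite $\mathfrak{M}$ from the proof of Lemma~\ref{lem:discr_error-modified_score} (from \cite{monmarche2023almost,achleitner2015large}), which satisfies $\mathfrak{M}\tilde A_\epsilon+\tilde A_\epsilon^\top\mathfrak{M}\preccurlyeq-2(a-\eta)\mathfrak{M}$. Expanding $\|v_{k+1}\|_\mathfrak{M}^2$, the $\tilde A_\epsilon$ cross-term gives $-2h(a-\eta)\|v_k\|_\mathfrak{M}^2$, the $\SigmaEps^2\bar G_k$ cross-term is bounded by $2h\,c\,\tilde L_{T-t_k}\|v_k\|_\mathfrak{M}^2$ ($c$ depending only on the eigenvalues of $\mathfrak{M}$), and the remaining quadratic term is $O\!\big(h^2(\|\tilde A_\epsilon\|+\tilde L_{T-t_k})^2\big)\|v_k\|_\mathfrak{M}^2$. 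Absorbing the $h^2$ term exactly as in Lemma~\ref{lem:discr_error-modified_score} gives the pathwise bound $\|v_{k+1}\|_\mathfrak{M}^2\leq\delta_k\|v_k\|_\mathfrak{M}^2$ with $\delta_k=1+h\big(C\tilde L_{T-t_k}-(a-\eta)\big)$.

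Iterating, using $1+x\leq\rme^x$ and the decay $\tilde L_t\leq C(1+t^{-1/2})\rme^{-2at}$ from \eqref{eq:prop-Lipschitz-propagation:exp-decay} (so that $\sum_j h\tilde L_{T-t_j}\leq\int_0^\infty\tilde L_s\,\rmd s\leq Ca^{-1}$ uniformly in $h,\Delta$, as in Lemma~\ref{lem:discr_error-modified_score}), I obtain $\prod_{k=0}^{N-1}\delta_k\leq\exp(Ca^{-1}-(a-\eta)T)$, whence, after passing between $\|\cdot\|_\mathfrak{M}$ and $\|\cdot\|$ (ratio of eigenvalues of $\mathfrak{M}$ into $C$) and taking $L_2$-norms, $\|\Xbar_T-\Xbar^\infty_T\|_{L_2}\leq C\rme^{Ca^{-1}}\rme^{-(a-\eta)T/2}\,\mathcal{W}_2(p_T,\pi_\infty)$. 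Finally $\mathcal{W}_2(p_T,\pi_\infty)=\mathcal{W}_2(\mathcal{L}(\Xora_T),\pi_\infty)\leq K_T\rme^{-aT}\mathcal{W}_2(\pi_{\rm data}\otimes\pi_v,\pi_\infty)$ by Lemma~\ref{lem:forward_contraction}; multiplying the two contraction factors yields $\rme^{-(a+(a-\eta)/2)T}=\rme^{-(\frac{3a}{2}-\frac{\eta}{2})T}\leq\rme^{-\frac32(a-\eta)T}$, and the polynomial factor $K_T\leq C(1+T)$ is absorbed into the $T$ prefactor, giving the claim.

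\textbf{Main obstacle.} The structural work is already carried by the cited ingredients; what needs care is (i) showing the \emph{Euler map} $\Id_{2d}+h\tilde A_\epsilon+h\SigmaEps^2\bar G_k$, not merely the continuous flow, is an $\mathfrak{M}$-contraction — which forces control of the $h^2$ terms and is the familiar subtlety in discretizing a non-normal, merely hypocoercive drift — and (ii) handling the time-inhomogeneous constant $\tilde L_{T-t_k}$, which blows up like $(T-t_k)^{-1/2}$ as $t_k\to T$: the product $\prod_k\delta_k$ remains bounded only because $\tilde L$ is integrable on $(0,\infty)$, precisely the content of \eqref{eq:prop-Lipschitz-propagation:exp-decay}.
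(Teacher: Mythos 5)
Your proposal is correct and follows essentially the same route as the paper's proof: synchronous coupling, contraction in the $\mathfrak{M}$-weighted norm via $\mathfrak{M}\tilde A_\epsilon\preccurlyeq-(a-\eta)\mathfrak{M}$ and the integrable decay of $\tilde L_{T-t_k}$ from Proposition~\ref{prop:Lipschitz-propagation}, followed by the forward contraction of Lemma~\ref{lem:forward_contraction} to convert $\mathcal{W}_2(p_T,\pi_\infty)$ into $\rme^{-aT}\mathcal{W}_2(\pi_{\rm data}\otimes\pi_v,\pi_\infty)$. If anything, you are slightly more careful than the paper in tracking the $h^2$ quadratic term of the Euler map and in the final exponent bookkeeping $\rme^{-(\frac{3a}{2}-\frac{\eta}{2})T}\leq\rme^{-\frac32(a-\eta)T}$.
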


\begin{proof}
    Consider a synchronous coupling of the continuous‐time interpolations  $( \Xbar_t)_{t \in [0,T]}$ and $(\Xbar^{\infty}_t)_{t \in [0,T]}$, defined in \eqref{eq:euler_backward_CLD}, with initialization 
    \begin{align*}
        \mathcal{W}_2 \left(\pi_{\infty} , \mathcal{L} \left( \Xora_T \right) \right) = \left\| \Xbar_0 - \Xbar_0^{\infty} \right\|_{L_2} \eqsp.
    \end{align*}
    By definition of the $\mathcal{W}_2$ distance, 
    \begin{align*}
        \mathcal{W}_2 \left( \mathcal{L} \left( \Xbar_T \right) , \mathcal{L} \left( \Xbar_T^{\infty} \right) \right) & \leq \left\| \Xbar_T - \Xbar_T^{\infty} \right\|_{L_2} \eqsp.
    \end{align*}
    Analogously to the proof of Lemma \ref{lem:discr_error-modified_score} and Lemma \ref{lem:error:approx2}, fix $\Delta \geq 0$ such that $t_N = T - \delta$, and note that for $t \in [t_k,t_{k+1}]$, we have that
    \begin{align*}
        &\left\| 
            \Xbar_{t_{k+1}} - \Xbar_{t_{k+1}}^{\infty} 
        \right\|^2_\mathfrak{M} 
        = \left\|
            \Xbar_{t_{k}} - \Xbar_{t_{k}}^{\infty} 
         \right\|^2_\mathfrak{M}  + C_{k}
        \eqsp,
    \end{align*}
    with
    \begin{align*}
        C_{k} &= h 2 \left(
            \Xbar_{t_{k}} - \Xbar_{t_{k}}^{\infty} 
        \right)^\top \mathfrak{M}\left\{
            \tilde A_{\epsilon} \left(
                \Xbar_{t_{k}} - \Xbar_{t_{k}}^{\infty}
            \right)  + \Sigma^2_\epsilon \left( \tilde\score_{T-t_k} \left( \Xbar_{t_{k}}\right) - \tilde\score_{T-t_k} \left( \Xbar_{t_{k}}^{\infty}\right)  \right)
        \right\}
        \eqsp.
    \end{align*}
    Similarly to Lemma \ref{lem:discr_error-modified_score}, we use that, for any fixed $\eta>0$, we have
\begin{align*}
    \mathfrak{M}\tilde A_{\epsilon} \preccurlyeq -(a-\eta) \mathfrak{M}\eqsp.
\end{align*}
Therefore, 
\begin{align*}
    \left(
            \Xbar_{t_{k}} - \Xbar_{t_{k}}^{\infty} 
        \right)^\top \mathfrak{M}
            \tilde A_{\epsilon} \left(
                \Xbar_{t_{k}} - \Xbar_{t_{k}}^{\infty}
            \right) \leq -(a - \eta) \left\| 
            \Xbar_{t_{k}} - \Xbar_{t_{k}}^{\infty} 
         \right\|^2_\mathfrak{M} \eqsp,
\end{align*}
and using Proposition \ref{prop:Lipschitz-propagation} there exists $C>0$ a universal constant (depending only on the eigenvalues of the matrix \( \mathfrak{M} \) or constant factors) such that
\begin{align*}
   \left(
            \Xbar_{t_{k}} - \Xbar_{t_{k}}^{\infty} 
        \right)^\top \mathfrak{M} \Sigma^2 \left( \tilde\score_{T-t_k} \left( \Xbar_{t_{k}}\right) - \tilde\score_{T-t_k} \left( \Xbar_{t_{k}}^{\infty}\right)  \right) \leq C \tilde L_{T-t_k} \left\| 
            \Xbar_{t_{k}} - \Xbar_{t_{k}}^{\infty} 
         \right\|^2_\mathfrak{M} \eqsp,
\end{align*}
it follows that
    \begin{align*}
        \E\left[C_{k}\right]
        &\leq h\left(C \tilde{L}_{T-t_k}-(a-\eta) \right)\E\left[\left\| 
            \Xbar_{t_{k}} - \Xbar_{t_{k}}^{\infty} 
         \right\|^2_\mathfrak{M} \right]
        \eqsp.
    \end{align*}
    As a consequence,
    \begin{align*}
        \E\left[\left\| 
            \Xbar_{t_{N}} - \Xbar_{t_{N}}^{\infty} 
        \right\|^2_\mathfrak{M} \right]  & \leq \E\left[\left\| 
            \Xbar_{0} - \Xbar_{0}^{\infty} 
        \right\|^2_\mathfrak{M} \right] \prod_{\ell = 0}^{N-1} \delta^{\prime}_\ell \eqsp,
    \end{align*}
    with $\delta^{\prime}_\ell = 1 + h(C \tilde{L}_{T-t_k}-(a-\eta) )$. Since $\exp(x)\geq 1+x$, for $x\in\R$, we have that
    \begin{align*}
        \prod_{\ell=0}^{N-1}\delta^{\prime}_\ell
        &\leq \rme^{\sum_{k=0}^{N-1}h\left(C \tilde{L}_{T-t_k}-(a-\eta)\right) } \\
        & \leq \rme^{-(a-\eta) T +C \sum_{k=0}^{N-1}h  \tilde{L}_{T-t_k} } \\
         &
        \leq
        \rme^{-(a-\eta) T + C \int_{0}^\infty\tilde{L}_{s}\rmd s} \\
        & \leq \rme^{-(a-\eta) T +C a^{-1} }
        \eqsp,
    \end{align*}
    thus,
    \begin{align*}
        \E\left[\left\|
            \Xbar_{t_{N}} - \Xbar_{t_{N}}^{\infty} 
        \right\|^2_\mathfrak{M} \right]  & \leq 
        \rme^{C a^{-1} }
        \rme^{-(a-\eta) T }
        \E\left[\left\| 
            \Xbar_{0} - \Xbar_{0}^{\infty} 
        \right\|^2_\mathfrak{M} \right] \eqsp,
    \end{align*}
    which implies, taking the limit as $\Delta \to 0$ together with Fatou's lemma, that
    \begin{align*}
        \mathcal{W}_2^2 \left( \mathcal{L} \left( \Xbar_T \right) , \mathcal{L} \left( \Xbar_T^{\infty} \right) \right) & \leq C\rme^{C a^{-1} }
        \rme^{-(a-\eta) T }
        \left\| \Xbar_{0} - \Xbar_{0}^{\infty} \right\|^2_{L_2} 
        \eqsp.
    \end{align*}
    Moreover, similarly to the backward, the forward process also satisfies the following contraction property (Lemma  \ref{lem:forward_contraction}  ),
    \begin{align*}
        \left\| \Xbar_{0} - \Xbar_{0}^{\infty} \right\|_{L_2}
        =
        \mathcal{W}_2 \left(\pi_{\infty} , \mathcal{L} \left( \Xora_T \right) \right)
        &\leq C T
        \rme^{-(a-\eta) T }
        \Wc_2\left( \pi_{\rm data}\otimes\pi_v,\pi_\infty\right)
        \eqsp,
    \end{align*}
    yielding \eqref{eq:mixing-time}.
\end{proof}

\section{Proof of Theorem~\ref{th:wass:epsilon}}
\label{sec:proofs:epsilon}

In this section, we prove Theorem~\ref{th:wass:epsilon}. To establish this result, we work with the (unmodified) score function rather than the modified one used previously.
Similarly to the previous section, we introduce the continuous time interpolation $(\Xbar_t)_{t \in [0,T]}$ of the Euler scheme for the time-reversed process $(\Xola_t)_{t \in [0,T]}$ defined as the Itô process, for $t \in \left[ t_k, t_{k+1} \right]$,
\begin{equation} \label{eq:euler_backward_bis}
    \Xbar_{t}
    = \Xbar_{t_k} + \left(- A \Xbar_{t_k} + \SigmaEps^2 \score_{T-t_k}\big( \Xbar_{t_k} \big) \right) (t-t_k) + \SigmaEps \big(B_t - B_{t_k}\big)\eqsp,
\end{equation}
when initialized at $p_T$ (\ie, $\Xbar_{0} \sim p_T $). When initialized at $\pi_{\infty}$, we write $(\Xbar^{\infty}_t)_{t\in [0,T]}$ this Itô process.
We also introduce the continuous time Euler scheme $(\Xbar^\theta_t)_{t \in [0,T]}$ in which the true, unknown score function is replaced by a neural network approximation $s_\theta$, and defined for \( t \in [t_k, t_{k+1}] \) as
\begin{equation} \label{eq:generative_model_bis}
    \Xbar^{\theta}_t = \Xbar^{\theta}_{t_k} + \left(- A \Xbar^{\theta}_{t_k} + \SigmaEps^2 s_{\theta} \big(t_k, \Xbar_{t_k}^{\theta} \big)\right) (t-t_k) + \SigmaEps \big( B_t - B_{t_k} \big)
    \eqsp,
\end{equation}
where $\Xbar^{\theta}_{0} \sim \pi_{\infty}$. 

We first establish the propagation of regularity properties: strong log-concavity propagation (Proposition~\ref{prop:strongly_log_conc_prop}) and Lipschitz regularity propagation (Proposition~\ref{prop:smooth_prop}), followed by the proof of Theorem~\ref{th:wass:epsilon}.
To this end, we decompose the generation error into the sum of the discretization error (Lemma~\ref{lem:discr_error_non_degenerate}), the approximation error (Lemma~\ref{lem:error:approx}), and the mixing time error (Lemma~\ref{lem:error:mixing:eps}), as in Theorem~\ref{th:wass:cld}.

\subsection{Propagation of the regularity assumptions}

\begin{proposition}
\label{prop:strongly_log_conc_prop}
    Assume that \ref{hyp:strong-log-concavity} holds. Then for all $t \in [0, T]$ and all $u \in \mathbb{R}^{2d}$,
    \begin{align*}
          \nabla^2 \log p_t(u) \preccurlyeq  - \alpha_t \Id_{2d} \eqsp,
    \end{align*}
    where 
    \begin{align} \label{eq:c_t_def_log_conc}
\alpha_t = \left( \frac{1}{ (\alpha_0\wedge v^{-2}) \sigma^2_{\min}(e^{-tA})} + \lambda_{\max}(\Sigma_{0,t}) \right)^{-1} \eqsp.
\end{align}
\end{proposition}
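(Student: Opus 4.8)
The plan is to exploit the Gaussian‐channel representation of $p_t$. By Lemma~\ref{prop:convergence-fwd-flow}, $\Xora_t \eqL Y_0 + Y_1$ with $Y_0 \eqdef \rme^{tA}\Xora_0$ and $Y_1 \eqdef \sqrt{\Sigma_{0,t}}\,G$, where $G\sim\mathcal N(0,\Id_{2d})$ is independent of $\Xora_0\sim\pi_{\mathrm{data}}\otimes\pi_v$; thus $p_t$ is the convolution of the law $q$ of $Y_0$ with $\mathcal N(0,\Sigma_{0,t})$, and the statement amounts to the (anisotropic) fact that convolving a strongly log-concave density with a Gaussian again gives a strongly log-concave density, with an explicit constant. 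First I would establish the strong log-concavity of $q$: under \ref{hyp:strong-log-concavity}, $\nabla^2(-\log p_0)(x,v_0)=\mathrm{diag}\big(\nabla^2 V(x),\,v^{-2}\Id_d\big)\succcurlyeq(\alpha_0\wedge v^{-2})\Id_{2d}$, and since $q(y)=|\det\rme^{-tA}|\,p_0(\rme^{-tA}y)$,
\[
\nabla^2(-\log q)(y)=\rme^{-tA^\top}\nabla^2(-\log p_0)(\rme^{-tA}y)\,\rme^{-tA}\succcurlyeq(\alpha_0\wedge v^{-2})\,\rme^{-tA^\top}\rme^{-tA}\succcurlyeq \beta_t\,\Id_{2d},
\]
with $\beta_t\eqdef(\alpha_0\wedge v^{-2})\,\sigma_{\min}^2(\rme^{-tA})$, which is finite and strictly positive because $\rme^{-tA}$ is invertible for every $t\ge 0$.

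Next I would invoke the Hessian identity \eqref{eq:hess_cov} with the Gaussian potential $\phi_{1,t}$, for which $\nabla\phi_{1,t}(u)=\Sigma_{0,t}^{-1}u$ and $\nabla^2\phi_{1,t}\equiv\Sigma_{0,t}^{-1}$, so that, using $\mathrm{Cov}(Y_1\mid Y_0+Y_1=u)=\mathrm{Cov}(Y_0\mid Y_0+Y_1=u)$,
\[
\nabla^2\log p_t(u)=\Sigma_{0,t}^{-1}\,\mathrm{Cov}\big(Y_0\mid Y_0+Y_1=u\big)\,\Sigma_{0,t}^{-1}-\Sigma_{0,t}^{-1}.
\]
The conditional law of $Y_0$ given $Y_0+Y_1=u$ has density $\propto q(y)\exp\!\big(-\tfrac12(u-y)^\top\Sigma_{0,t}^{-1}(u-y)\big)$, whose potential has Hessian $\succcurlyeq\beta_t\Id_{2d}+\Sigma_{0,t}^{-1}$, so the Brascamp–Lieb inequality \cite{BrascampLieb1976} yields $\mathrm{Cov}(Y_0\mid Y_0+Y_1=u)\preccurlyeq(\beta_t\Id_{2d}+\Sigma_{0,t}^{-1})^{-1}$. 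Plugging this in and using that $M\mapsto\Sigma_{0,t}^{-1}M\Sigma_{0,t}^{-1}$ preserves the Loewner order, together with the fact that $\Sigma_{0,t}$, $\Sigma_{0,t}^{-1}$ and $(\beta_t\Id_{2d}+\Sigma_{0,t}^{-1})^{-1}$ pairwise commute, a direct rearrangement gives $\Sigma_{0,t}^{-1}(\beta_t\Id_{2d}+\Sigma_{0,t}^{-1})^{-1}\Sigma_{0,t}^{-1}-\Sigma_{0,t}^{-1}=-(\Sigma_{0,t}+\beta_t^{-1}\Id_{2d})^{-1}$. Hence $\nabla^2\log p_t(u)\preccurlyeq-(\Sigma_{0,t}+\beta_t^{-1}\Id_{2d})^{-1}\preccurlyeq-\big(\lambda_{\max}(\Sigma_{0,t})+\beta_t^{-1}\big)^{-1}\Id_{2d}=-\alpha_t\Id_{2d}$, which is exactly \eqref{eq:c_t_def_log_conc}.

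The routine parts are the final algebraic collapse and the bound $\lambda_{\max}(\Sigma_{0,t})+\beta_t^{-1}\ge\big\|\Sigma_{0,t}+\beta_t^{-1}\Id_{2d}\big\|$. The step that requires the most care is the first one: turning the coordinatewise strong log-concavity of $\pi_{\mathrm{data}}\otimes\pi_v$ into a \emph{dimension-free} quantitative lower bound on $\nabla^2(-\log q)$ after the non-normal change of variables $\rme^{tA}$ — this is precisely where the smallest singular value $\sigma_{\min}(\rme^{-tA})$ must enter, rather than the (critically damped, degenerate) eigenvalue $\rme^{at}$ of $\rme^{-tA}$ — and in checking that the conditioned law is regular enough to apply Brascamp–Lieb, which holds since $\beta_t>0$ makes it strongly log-concave (hence with finite second moments).
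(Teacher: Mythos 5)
Your proof is correct and follows the same overall strategy as the paper: represent $p_t$ as the convolution of the pushforward law of $\rme^{tA}\Xora_0$ with $\mathcal N(0,\Sigma_{0,t})$, show that the pushforward is $\beta_t$-strongly log-concave with $\beta_t=(\alpha_0\wedge v^{-2})\sigma_{\min}^2(\rme^{-tA})$ via the congruence $\rme^{-tA^\top}\nabla^2(-\log p_0)\rme^{-tA}$, and then invoke preservation of strong log-concavity under Gaussian convolution with the harmonic-mean constant. The only difference is that the paper closes the argument by citing \cite{saumard2014log} for this last step, whereas you derive it from scratch using the Hessian--covariance identity \eqref{eq:hess_cov} together with the Brascamp--Lieb inequality and the algebraic collapse $\Sigma_{0,t}^{-1}(\beta_t\Id+\Sigma_{0,t}^{-1})^{-1}\Sigma_{0,t}^{-1}-\Sigma_{0,t}^{-1}=-(\Sigma_{0,t}+\beta_t^{-1}\Id)^{-1}$; this is exactly the machinery the paper itself deploys in the proof of Proposition~\ref{prop:Lipschitz-propagation}, so your version is a self-contained unpacking of the citation rather than a different route. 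All the individual steps (the commutation used in the collapse, the passage from $-(\Sigma_{0,t}+\beta_t^{-1}\Id)^{-1}$ to $-\alpha_t\Id_{2d}$, and the applicability of Brascamp--Lieb under \ref{hyp:strong-log-concavity}) check out.
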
 

\begin{proof}
Similar to Proposition \ref{prop:Lipschitz-propagation} recall the following equality in law given by the modified kinetic OU process \eqref{eq:forward_SDE-eps}
\begin{align*}
    \Xora_t & \eqL \rme^{tA} \Xora_0 + \sqrt{\Sigma_{0,t}} G \eqsp,
\end{align*}
with $\Xora_0 \sim \pi_{\rm data}\otimes\pi_v$, $G \sim \mathcal{N}\l( 0 , \Id_{2d} \r)$, where $G$ and $\Xora_0$ are independent, and $\Sigma_{0,t}$ is defined in \eqref{eq:def-mu_0t-Sigma_st}. Writing $q_{t|0}$ the conditional density of $\Xora_t$ given $\Xora_0$, we have
    \begin{align*}
        p_t(u_t) & = \det\left(\rme^{-tA}\right)
\int_{\mathbb{R}^{2d}} p_0\bigl(\rme^{-tA}z\bigr)
\det\left(2\pi\Sigma_{0,t}\right)^{-1/2}
\exp\!\left( -\frac{1}{2}\Bigl( u_t - z \Bigr)^\top \Sigma_{0,t}^{-1} \Bigl( u_t - z \Bigr) \right)
\rmd z.
\end{align*}
Since $\pi_{\rm data}$ is $\alpha_0$–strongly log-concave and $\pi_v$ is a centered Gaussian with covariance $v^2 \Id_d$, their product (\textit{i.e.} the probability density function of $\Xora_0$) satisfies
\begin{align*}
\nabla^2 \log p_0(z) \preccurlyeq - (\alpha_0\wedge v^{-2})\,\Id_{2d}\,.
\end{align*}
Consequently, for any $z \in \mathbb{R}^{2d}$,
\begin{align*}
    \nabla^2 \log p_0(\rme^{-tA} z) \preccurlyeq - (\alpha_0\wedge v^{-2}) (\rme^{-tA})^\top \rme^{-tA} \eqsp.
\end{align*}
Finally, using \cite{saumard2014log}, $p_t$ is strongly log-concave with constant
\begin{align*}
\alpha_t = \left( \frac{1}{ (\alpha_0\wedge v^{-2}) \sigma^2_{\min}(e^{-tA})} + \lambda_{\max}(\Sigma_{0,t}) \right)^{-1} \eqsp.
\end{align*}
\end{proof}

\begin{proposition}
\label{prop:smooth_prop}
    Assume that \ref{hyp:strong-log-concavity} holds. Then, for all $t>0$, $\nabla\log p_t$ is $L_t$-Lipschitz: for all $u\in\mathbb{R}^{2d}$,
     \begin{align*}
        \left\| \nabla^2 \log p_t (u) \right\| \leq L_t \leq \min \left\{
            \mathfrak{h}_{1,t}
            ;
            \mathfrak{h}_{2,t} \right\}
         \eqsp.
    \end{align*}
    where
    \begin{align*}
        \mathfrak{h}_{1,t} &= \left(1+(a+1)^2 t\right)^2\rme^{2ta} \max \left\{ \constLip, v^{-2} \right\}\\
        \mathfrak{h}_{2,t} & = \frac{4}{\lfloor \sigma^2 \min\{a, 1/a\} - 
    \left(\sigma^2 \max\{a, 1/a\} + 5 \varepsilon^2a^{-1} \right) \rme^{-2at}\rfloor_+}\eqsp.
    \end{align*}
\end{proposition}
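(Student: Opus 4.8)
The plan is to reuse the machinery from the proof of Proposition~\ref{prop:Lipschitz-propagation}, now specialized to the strongly log-concave and log-smooth setting of \ref{hyp:strong-log-concavity} (and with $\varepsilon>0$, as throughout this section). The key preliminary observation is that, by Proposition~\ref{prop:strongly_log_conc_prop}, under \ref{hyp:strong-log-concavity} one already has $\nabla^2\log p_t(u)\preccurlyeq -\alpha_t\Id_{2d}\preccurlyeq 0$ for all $t\in[0,T]$ and all $u\in\mathbb{R}^{2d}$. Hence $\nabla^2\log p_t(u)$ is negative semidefinite, so $\|\nabla^2\log p_t(u)\|=-\lambda_{\min}(\nabla^2\log p_t(u))$, and it suffices to establish a lower bound of the form $\nabla^2\log p_t(u)\succcurlyeq -L_t\Id_{2d}$: no genuinely two-sided estimate is needed, which is what makes the argument shorter than the generic Lipschitz-propagation proof.

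To produce such a lower bound I would invoke the Gaussian convolution representation $\Xora_t\eqL \rme^{tA}\Xora_0+\sqrt{\Sigma_{0,t}}\,G$ together with the Saumard–Wellner covariance identity \eqref{eq:hess_cov}, exactly as in Step~1 of the proof of Proposition~\ref{prop:Lipschitz-propagation}. This yields the two one-sided inequalities in \eqref{eq:prop-Lipschitz-propagation:majoration}, namely $\nabla^2\log p_t(u)\succcurlyeq -\mathbb{E}[\nabla^2\phi_{0,t}(Y_0)\mid Y_0+Y_1=u]$ and $\nabla^2\log p_t(u)\succcurlyeq -\mathbb{E}[\nabla^2\phi_{1,t}(Y_1)\mid Y_0+Y_1=u]$, with $\nabla^2\phi_{0,t}(u)=-\rme^{-tA^\top}\nabla^2\log p_0(\rme^{-tA}u)\rme^{-tA}$ and $\nabla^2\phi_{1,t}(u)=\Sigma_{0,t}^{-1}$.

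For the first bound I would use that $L_0$-log-smoothness of $\pi_{\rm data}$ in \ref{hyp:strong-log-concavity} gives $\nabla^2(-\log p_{\rm data})=\nabla^2V\preccurlyeq L_0\Id_d$ directly (no appeal to \cite{bouchut2005uniqueness} is needed here), and since $p_0=\pi_{\rm data}\otimes\pi_v$ with $\pi_v=\mathcal{N}(0,v^2\Id_d)$, one gets $\nabla^2(-\log p_0)(u)\preccurlyeq\max\{L_0,v^{-2}\}\Id_{2d}$. Consequently $-\nabla^2\phi_{0,t}(u)\succcurlyeq -\max\{L_0,v^{-2}\}\,\|\rme^{-tA}\|^2\Id_{2d}$, and Lemma~\ref{matrix_A_decomposition} yields $\|\rme^{-tA}\|^2\le(1+(a+1)^2t)^2\rme^{2ta}$; after conditioning this gives $\nabla^2\log p_t(u)\succcurlyeq -\mathfrak{h}_{1,t}\Id_{2d}$. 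For the second bound, since $\nabla^2\phi_{1,t}\equiv\Sigma_{0,t}^{-1}$ one gets $\nabla^2\log p_t(u)\succcurlyeq -\|\Sigma_{0,t}^{-1}\|\Id_{2d}$, and Lemma~\ref{lem:prop-Sigma0t} bounds $\|\Sigma_{0,t}^{-1}\|=1/\lambda_{\min}(\Sigma_{0,t})\le\mathfrak{h}_{2,t}$. Taking the smaller of the two constants gives $\|\nabla^2\log p_t(u)\|\le\min\{\mathfrak{h}_{1,t},\mathfrak{h}_{2,t}\}=:L_t$, which is the claim.

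Since every nontrivial analytic input — the conditional covariance identity, the norm estimate on $\rme^{-tA}$, and the lower bound on $\lambda_{\min}(\Sigma_{0,t})$ — is already established earlier in the excerpt, I do not anticipate a real obstacle. The only point worth a moment's care is the reduction in the first paragraph: it is Proposition~\ref{prop:strongly_log_conc_prop} that lets us replace an operator-norm control of $\nabla^2\log p_t$ by a one-sided lower bound, so that we never have to reprove an upper bound on the Hessian; without strong log-concavity one would instead need the more delicate two-sided analysis carried out in Steps~2–3 of the proof of Proposition~\ref{prop:Lipschitz-propagation}.
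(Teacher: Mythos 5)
Your proposal is correct and follows essentially the same route as the paper: the lower bound $\nabla^2\log p_t(u)\succcurlyeq-\min\{\mathfrak{h}_{1,t},\mathfrak{h}_{2,t}\}\Id_{2d}$ is obtained exactly as in Step~1 of the proof of Proposition~\ref{prop:Lipschitz-propagation} (via the covariance identity, the bound on $\|\rme^{-tA}\|^2$, and Lemma~\ref{lem:prop-Sigma0t}), and the upper bound $\nabla^2\log p_t(u)\preccurlyeq 0$ comes from Proposition~\ref{prop:strongly_log_conc_prop}, so the two one-sided estimates combine into the operator-norm bound. Your observation that under \ref{hyp:strong-log-concavity} the bound $\nabla^2 V\preccurlyeq L_0\Id_d$ is available directly, without the one-sided-Lipschitz argument, is a harmless simplification that yields the same constant $\mathfrak{h}_{1,t}$.
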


\begin{proof}
Following ``\emph{Step 1: Lower bound on $\nabla^2 \log p_t$}'' in the proof of Proposition~\ref{prop:Lipschitz-propagation}, we obtain for all $t>0$
\begin{align*}
    \nabla^2 \log p_t (u) \succcurlyeq - \min \left\{
        \mathfrak{h}_{1,t}
        ;
        \mathfrak{h}_{2,t}
    \right\} \Id_{2d} \eqsp,
\end{align*}
where
    \begin{align*}
        \mathfrak{h}_{1,t} &= \left(1+(a+1)^2 t\right)^2\rme^{2ta} \max \left\{ \constLip, v^{-2} \right\}\\
        \mathfrak{h}_{2,t} & = \frac{4}{\lfloor \sigma^2 \min\{a, 1/a\} - 
    \left(\sigma^2 \max\{a, 1/a\} + 5 \varepsilon^2a^{-1} \right) \rme^{-2at}\rfloor_+}\eqsp.
    \end{align*}
Moreover, Proposition \ref{prop:strongly_log_conc_prop} implies that
\begin{align*}
          \nabla^2 \log p_t(u)  & \preccurlyeq  - \alpha_t \Id_{2d} \\
          & \preccurlyeq 0_{2d \times 2d} \eqsp,
\end{align*}
    where $\alpha_t$ defined as in \eqref{eq:c_t_def_log_conc}.
Consequently,
\begin{align*}
    \left\| \nabla^2 \log p_t (u) \right\| \leq \min \left\{
        \mathfrak{h}_{1,t}
        ;
        \mathfrak{h}_{2,t}
    \right\} \eqsp.
\end{align*}
\end{proof}

\subsection{Proofs of the main results}
\begin{lemma}[Discretization error] \label{lem:discr_error_non_degenerate}  
Assume that \ref{hyp:strong-log-concavity} holds and let $\varepsilon>0$. If the step size $h$ satisfies
$$
0 < h < \frac{ 2 \min_k \alpha_{t_k} \left(  \sigma^2 \wedge \varepsilon^2 \right) - (\sigma - \varepsilon)^2 \max_k L_{t_k}- (a+1)^2}{\|A\|^2 + (\varepsilon^4 + \sigma^4) \max_k L_{t_k}^2 + 2 \left( \sigma^2 \vee \varepsilon^2 \right) \|A\| \max_k L_{t_k}} \eqsp,
$$
then, there exists $\delta_\varepsilon>0$ such that $\mathcal{W}_2 \left( \Lc \big( \Xola_T \big), \mathcal{L} \big( \Xbar_T \big) \right) 
\leq 2 \sqrt{h} C_a(\varepsilon)/\delta_\varepsilon$ where 
\begin{equation} \label{def:Ceps}
    C_a(\varepsilon) = \Big(2 \|A\|^4 B_\varepsilon + 4 d (a^2 \sigma^2 + \varepsilon)^2 \Lambda^*_\varepsilon(T) \Big) h + 4 d \Big(\|A\|^2  + \sigma^4 \sup_{t \in [0, T]} L_{T-t}^2 \Big)\eqsp,
\end{equation}
with 
$$
\Lambda^*_\varepsilon(T) = 
\min\Big\{\frac{2 a \big(1 + (a+1)^2 T\big)^2}{\min\{\varepsilon^2, \sigma^2\}}, \frac{4}{\sigma^2 \min\{a,1/a\} - \big(\sigma^2 \max\{a, 1/a\} + 5a\varepsilon^{-2}\big) \rme^{-2aT}}  \Big\}\eqsp,
$$ 
such that $\sup_{T > 0} \Lambda_\varepsilon^*(T) < +\infty
$
and 
\begin{equation} \label{def:Beps}
    B_\varepsilon := \max_{t \in [0,T]} \big(1 + (a+1)^2(T-t)\big)^2 \rme^{-2a(T-t)} \| \Xora_0 \|_{L^2}^2 
    + \frac{d}{2} \left( \sigma^2 \max\{a, 1/a\} + \frac{5 \varepsilon^2}{a} \right) \eqsp.
\end{equation}
\end{lemma}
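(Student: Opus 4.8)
The plan is a synchronous‑coupling/discrete‑Grönwall argument with the same skeleton as Lemma~\ref{lem:discr_error-modified_score}, but carried out in the plain Euclidean norm: here no hypocoercive weight $\mathfrak{M}$ is needed, because ellipticity ($\varepsilon>0$) together with the propagated strong log‑concavity of Proposition~\ref{prop:strongly_log_conc_prop} already yields Euclidean contraction. I would couple the exact time‑reversed process $(\Xola_t)_{t\in[0,T]}$ of \eqref{eq:backward_SDE} with the Euler interpolation $(\Xbar_t)_{t\in[0,T]}$ of \eqref{eq:euler_backward_bis}, driving both by the same Brownian motion from the common start $\Xola_0=\Xbar_0\sim p_T$, so that $\mathcal W_2(\Lc(\Xola_T),\Lc(\Xbar_T))\le\|\Xola_T-\Xbar_T\|_{L_2}$. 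Since $\SigmaEps$ is constant and identical for both dynamics, the noise cancels in $D_t:=\Xola_t-\Xbar_t$, which on each $[t_k,t_{k+1}]$ solves the random ODE $\dot D_t=-A(\Xola_t-\Xbar_{t_k})+\SigmaEps^2(\score_{T-t}(\Xola_t)-\score_{T-t_k}(\Xbar_{t_k}))$. Introducing the mean Hessian at the grid time, $\bar H_k:=\int_0^1\nabla^2\log p_{T-t_k}(\Xbar_{t_k}+\gamma D_{t_k})\,\rmd\gamma$, the mean value theorem lets one split the drift as $(-A+\SigmaEps^2\bar H_k)D_{t_k}+\mathcal E_{k,t}$, with error term $\mathcal E_{k,t}:=-A(\Xola_t-\Xola_{t_k})+\SigmaEps^2(\score_{T-t}(\Xola_t)-\score_{T-t_k}(\Xola_{t_k}))$ depending only on the true backward process; as the first summand is constant in $t$ over the step, $D_{t_{k+1}}=D_{t_k}+h(-A+\SigmaEps^2\bar H_k)D_{t_k}+\int_{t_k}^{t_{k+1}}\mathcal E_{k,t}\,\rmd t$.

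The heart of the proof is a one‑step contraction estimate for $-A+\SigmaEps^2\bar H_k$. From Proposition~\ref{prop:strongly_log_conc_prop}, $\SigmaEps\,\nabla^2\log p_s(u)\,\SigmaEps\preccurlyeq-(\sigma^2\wedge\varepsilon^2)\alpha_s\Id_{2d}$, and from Proposition~\ref{prop:smooth_prop}, $\|\nabla^2\log p_s(u)\|\le L_s$. Because $\SigmaEps^2$ and $\nabla^2\log p_s$ do not commute, I would symmetrise via the identity, valid for every symmetric $H$ with $d\times d$ blocks $H_{11},H_{12},H_{22}$,
\[
\SigmaEps^2 H+H\SigmaEps^2=2\,\SigmaEps H\SigmaEps+(\sigma-\varepsilon)^2\begin{pmatrix}0 & H_{12}\\ H_{12}^\top & 0\end{pmatrix},
\]
whose last term has operator norm at most $(\sigma-\varepsilon)^2\|H\|$. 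Combined with the exact spectral fact $\lambda_{\max}\!\big(-\tfrac12(A+A^\top)\big)=\tfrac12(a+1)^2$ (which uses the critically damped form of $A$), this gives, pathwise,
\[
\big\langle D_{t_k},\,(-A+\SigmaEps^2\bar H_k)D_{t_k}\big\rangle\le\tfrac12\big[(a+1)^2+(\sigma-\varepsilon)^2 L_{t_k}-2(\sigma^2\wedge\varepsilon^2)\alpha_{t_k}\big]\|D_{t_k}\|^2\le-\tfrac12\,\kappa_\varepsilon\,\|D_{t_k}\|^2,
\]
where $\kappa_\varepsilon:=2\min_k\alpha_{t_k}(\sigma^2\wedge\varepsilon^2)-(\sigma-\varepsilon)^2\max_k L_{t_k}-(a+1)^2$ is exactly the numerator of \eqref{eq:admissible_h}, hence positive.

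For the discretization error, Itô's isometry gives $\E\|\Xola_t-\Xola_{t_k}\|^2\le C(h^2\|A\|^2 B_\varepsilon+hd\|\SigmaEps\|^2)$, with $B_\varepsilon$ from \eqref{def:Beps} dominating $\sup_s\E\|\Xora_s\|^2$ through the time reversal $\Xola_t\stackrel{\mathcal L}{=}\Xora_{T-t}$ and Lemma~\ref{prop:convergence-fwd-flow}; the score increment along the true backward is handled as in Lemma~\ref{lem:discr_error-modified_score} by a telescoping bound, now estimated through $\|\Sigma_{0,s}^{-1}\|\le\Lambda^*_\varepsilon(T)$ from Lemma~\ref{lem:prop-Sigma0t} (controlling both the linear growth of $\score_s$ and its time variation) together with $B_\varepsilon$; this yields $\int_{t_k}^{t_{k+1}}\E\|\mathcal E_{k,t}\|^2\,\rmd t\le h^2\,C_a(\varepsilon)$ with $C_a(\varepsilon)$ as in \eqref{def:Ceps}. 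Expanding $\|D_{t_{k+1}}\|^2$ from the one‑step identity above, the contraction estimate controls the cross term $2h\langle D_{t_k},(-A+\SigmaEps^2\bar H_k)D_{t_k}\rangle\le-\kappa_\varepsilon h\|D_{t_k}\|^2$, the quadratic self‑term obeys $\|h(-A+\SigmaEps^2\bar H_k)D_{t_k}\|^2\le h^2\big(\|A\|^2+(\varepsilon^4+\sigma^4)\max_k L_{t_k}^2+2(\sigma^2\vee\varepsilon^2)\|A\|\max_k L_{t_k}\big)\|D_{t_k}\|^2$ — precisely $h$ times the denominator of \eqref{eq:admissible_h} — and the cross terms involving $\int\mathcal E_{k,t}\,\rmd t$ are absorbed by Cauchy–Schwarz and Young's inequality. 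This produces the one‑step recursion $\E\|D_{t_{k+1}}\|^2\le(1-\delta_\varepsilon h)\,\E\|D_{t_k}\|^2+h^2 C_a(\varepsilon)^2/\delta_\varepsilon$, where $\delta_\varepsilon:=\kappa_\varepsilon-h\big(\|A\|^2+(\varepsilon^4+\sigma^4)\max_k L_{t_k}^2+2(\sigma^2\vee\varepsilon^2)\|A\|\max_k L_{t_k}\big)>0$ exactly under \eqref{eq:admissible_h}. Iterating from $D_0=0$ and summing the geometric series gives $\E\|D_T\|^2\le h\,C_a(\varepsilon)^2/\delta_\varepsilon^2$, whence $\mathcal W_2(\Lc(\Xola_T),\Lc(\Xbar_T))\le 2\sqrt h\,C_a(\varepsilon)/\delta_\varepsilon$.

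The main obstacle is the contraction estimate: the anisotropy of $\SigmaEps$ makes $\SigmaEps^2\nabla^2\log p_s$ non‑symmetric, so $\langle D,\SigmaEps^2\bar H_k D\rangle$ is \emph{not} bounded by $-(\sigma^2\wedge\varepsilon^2)\alpha_{t_k}\|D\|^2$ in general; the symmetrisation identity is what confines the error to an off‑diagonal term of size $(\sigma-\varepsilon)^2 L_{t_k}$, which vanishes in the isotropic limit $\varepsilon=\sigma$ and is small enough to be dominated by the propagated log‑concavity $\alpha_{t_k}$. The secondary delicacy is bookkeeping: one must check that the only $\|D_{t_k}\|^2$‑proportional quantity created at order $h^2$ is the drift self‑interaction, whose coefficient is exactly the denominator of \eqref{eq:admissible_h}, so that the admissible step‑size condition is precisely what makes $\delta_\varepsilon$ positive and the iteration contractive.
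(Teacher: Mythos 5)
Your proposal is correct and follows essentially the same route as the paper: synchronous coupling in the plain Euclidean norm, a one-step split into a frozen-coefficient contraction term and a drift-variation term along the true backward path, the same symmetrisation identity $\SigmaEps^2 H+H\SigmaEps^2=2\SigmaEps H\SigmaEps+(\sigma-\varepsilon)^2(\text{off-diagonal})$ combined with Propositions~\ref{prop:strongly_log_conc_prop} and~\ref{prop:smooth_prop} and $\lambda_{\min}(A+A^\top)=-(a+1)^2$, yielding exactly the step-size condition~\eqref{eq:admissible_h}, and a geometric-series summation from $D_0=0$. The only imprecision is in the error term: to obtain the per-step bound $\int_{t_k}^{t_{k+1}}\E\|\mathcal E_{k,t}\|^2\,\rmd t\lesssim h^2 C_a(\varepsilon)$ the paper does not use the telescoping $g$-function of Lemma~\ref{lem:discr_error-modified_score} (which only controls the sum over steps via the Fisher information), but instead applies It\^o's formula to the score along the backward path (Lemma~\ref{app:lemma:fokker_planck}) and splits the drift increment into a finite-variation part bounded via Lemmas~\ref{prop:true_backward_bound} and~\ref{lem:l2:score} (producing $B_\varepsilon$ and $\Lambda^*_\varepsilon$) and a martingale part bounded via Doob's inequality and the $L_t$-Lipschitz property; you invoke the right constants but the cited mechanism should be this one.
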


\begin{proof}
Consider a synchronous coupling for $( \Xola_t)_{t \in [0,T]}$ and $(\Xbar_t)_{t \in [0,T]}$
\ie, use the same Brownian motion to drive the two processes, with the same initial point, \ie, $\Xola_0 = \Xbar_0$. Then it holds, that
\begin{align*}
    \mathcal{W}_2 \left( \mathcal{L} (\Xola_T ), \mathcal{L} (\Xbar_T) \right) \leq \left\| \Xola_T - \Xbar_T \right\|_{L_2} \eqsp.
\end{align*}
Fix $\Delta \geq 0$ such that $t_N = T-\Delta$ and note that for all $0\leq k \leq N-1$, 
\begin{align*}
     \left\| \Xola_{t_{k+1}} - \Xbar_{t_{k+1}} \right\|_{L_2} 
    & \\
    &\hspace{-2cm}= \left\| \Xola_{t_{k}} - \Xbar_{t_{k}}   + \int_{t_k}^{t_{k+1}} \left\{- A \left( \Xola_{t} - \Xbar_{t_{k}}  \right)  + \SigmaEps^2 \left( \score_{T-t} \left( \Xola_{t} \right) - \score_{T-t_k} \left( \Xbar_{t_k} \right) \right)\right\}  \rmd t \right\|_{L_2} \\
    & \hspace{-2cm}\leq A_{1,k} + A_{2,k}\eqsp,
\end{align*}
where
\begin{align*}
    A_{1,k} & = \left\| \Xola_{t_{k}} - \Xbar_{t_{k}}   + \int_{t_k}^{t_{k+1}}\left\{ - A \left( \Xola_{t_k} - \Xbar_{t_{k}}  \right)  + \SigmaEps^2 \left( \score_{T-t_k} \left( \Xola_{t_k} \right) - \score_{T-t_k} \left( \Xbar_{t_k} \right) \right)\right\}  \rmd t \right\|_{L_2}\eqsp,\\
    A_{2,k} & = \left\| \int_{t_k}^{t_{k+1}}\left\{ - A \left( \Xola_{t} - \Xola_{t_{k}}  \right)  + \SigmaEps^2 \left( \score_{T-t} \left( \Xola_{t} \right) - \score_{T-t_k} \left( \Xola_{t_k} \right) \right) \right\} \rmd t \right\|_{L_2}\eqsp.
\end{align*}
For the first term, note that,
\begin{align*}
A_{1,k}^2 &= \left\| \left(\Id_{2d}-hA\right)\left(\Xola_{t_{k}} - \Xbar_{t_{k}}\right) + h\SigmaEps^2 \left( \score_{T-t_k} \left( \Xola_{t_k} \right) - \score_{T-t_k} \left( \Xbar_{t_k} \right) \right)\right\|_{L_2}^2\\
&= \left\| \left(\Id_{2d}-hA\right)\left(\Xola_{t_{k}} - \Xbar_{t_{k}}\right)\right\|_{L_2}^2 + \left\|h\SigmaEps^2 \left( \score_{T-t_k} \left( \Xola_{t_k} \right) - \score_{T-t_k} \left( \Xbar_{t_k} \right) \right)\right\|_{L_2}^2\\
& \hspace{2cm} + 2h \mathbb{E}\left[\left(\Xola_{t_{k}} - \Xbar_{t_{k}}\right)^\top \left(\Id_{2d}-hA\right)^\top \SigmaEps^2 \left( \score_{T-t_k} \left( \Xola_{t_k} \right) - \score_{T-t_k} \left( \Xbar_{t_k} \right) \right)\right]\eqsp.
\end{align*}
By Proposition \ref{prop:smooth_prop}, it follows that the score at time $t$ is $L_t-$Lipschitz continuous for all $t \in [0,T]$, in particular, 
$$
\left\|h\SigmaEps^2 \left( \score_{T-t_k} \left( \Xola_{t_k} \right) - \score_{T-t_k} \left( \Xbar_{t_k} \right) \right)\right\|_{L_2}^2 \leq h^2 (\varepsilon^4 + \sigma^4)L^2_{T-t_k}\left\|\Xola_{t_k}-\Xbar_{t_k}\right\|_{L_2}^2\eqsp. 
$$
Therefore,
\begin{multline*}
   A_{1,k}^2 \leq  \mathbb{E}\l[\left(\Xola_{t_{k}} - \Xbar_{t_{k}}\right)^\top\left((\Id_{2d}-hA)^\top(\Id_{2d}-hA) + h^2 (\varepsilon^4 + \sigma^4)L^2_{T-t_k}\Id_{2d}\right)\left(\Xola_{t_{k}} - \Xbar_{t_{k}}\right)\r]
   \\ 
   +2h \mathbb{E}\l[\left(\Xola_{t_{k}} - \Xbar_{t_{k}}\right)^\top \left(\Id_{2d}-hA\right)^\top \SigmaEps^2 \left( \score_{T-t_k} \left( \Xola_{t_k} \right) - \score_{T-t_k} \left( \Xbar_{t_k} \right) \right)\r]\eqsp.
\end{multline*}
For all $0\leq t\leq T$, let  $C_{t,k} \eqdef \int_0^1 \nabla^2 \log p_{t}\big( \Xola_{t_k} - \gamma (\Xola_{t_k} - \Xbar_{t_k} )\big) \rmd \gamma$ and write $\mathbf{A}_h = \Id_{2d}-hA$ so that,
\begin{align*}
  A_{1,k}^2 &\leq  \mathbb{E}\l[\left(\Xola_{t_{k}} - \Xbar_{t_{k}}\right)^\top
  \Big( \mathbf{A}_h^\top \mathbf{A}_h + h^2 (\varepsilon^4 + \sigma^4)L^2_{T-t_k}\Id_{2d}  + 2h \mathbf{A}_h^\top \SigmaEps^2 C_{T-t_k,k} \Big)
  \left(\Xola_{t_{k}} - \Xbar_{t_{k}}\right)\r] \\
  & \leq \left\| \Xola_{t_{k}} - \Xbar_{t_{k}} \right\|_{L_2}^2 + h \left(\Xola_{t_{k}} - \Xbar_{t_{k}}\right)^\top M_h(\varepsilon) \left(\Xola_{t_{k}} - \Xbar_{t_{k}}\right) \eqsp,
\end{align*}
where
$$
M_h(\varepsilon) = -( A^\top + A) + 2 \SigmaEps^2 C_{T-t_k,k} + h \left(A^\top A +  (\varepsilon^4 + \sigma^4) L_{T-t_k}^2 \Id_{2d} - 2 A^\top \SigmaEps^2 C_{T-t_k,k}  \right)\eqsp.
$$
In order to control $(\Xola_{t_{k}} - \Xbar_{t_{k}})^\top M_h(\varepsilon)(\Xola_{t_{k}} - \Xbar_{t_{k}})$, it is enough to control the eigenvalues of $\tilde M_h(\varepsilon)$ where
\begin{align*}
\tilde M_h(\varepsilon) &= \frac{1}{2}(M_h(\varepsilon)+M_h(\varepsilon)^\top)\\ &= -( A^\top + A) +  (\SigmaEps^2 C_{T-t_k,k} + C_{T-t_k,k} \SigmaEps^2 ) \\
& \qquad + h \left\{A^\top A +  (\varepsilon^4 + \sigma^4) L_{T-t_k}^2 \Id_{2d} - \left( A^\top \SigmaEps^2 C_{T-t_k,k}  + C_{T-t_k,k} \SigmaEps^2 A \right)\right\}\eqsp.
\end{align*}
Noting that,
\begin{align*}
    (\SigmaEps^2 C_{T-t_k,k} + C_{T-t_k,k} \SigmaEps^2 ) = 2 \SigmaEps C_{T-t_k,k} \SigmaEps + \SigmaEps^2 C_{T-t_k,k} + C_{T-t_k,k} \SigmaEps^2 - 2 \SigmaEps C_{T-t_k,k} \SigmaEps 
\end{align*}
By Proposition~\ref{prop:strongly_log_conc_prop},
\begin{align*}
     \SigmaEps C_{T-t_k,k} \SigmaEps  & \preccurlyeq - \alpha_{T-t_k} \lambda_{\min} \left( \SigmaEps^2 \right) \Id_{2d} \\
    & \preccurlyeq - \alpha_{T-t_k}  \left( \sigma^2 \wedge \varepsilon^2 \right) \Id_{2d} \eqsp,
\end{align*}
and simple calculations yields
\begin{align*}
    \SigmaEps^2 C_{T-t_k,k} + C_{T-t_k,k} \SigmaEps^2 - 2 \SigmaEps C_{T-t_k,k} \SigmaEps = (\sigma - \varepsilon)^2 \begin{pmatrix}
        0_{d \times d} & C_{T-t_k,k}^{12} \\
        {C_{T-t_k,k}^{12}}^\top &  0_{d \times d} 
    \end{pmatrix} \eqsp,
\end{align*}
where $C_{T-t_k,k}^{12}$ denotes the block anti diagonal elements of $C_{T-t_k}$. Hence,
\begin{align*}
    \SigmaEps^2 C_{T-t_k,k} + C_{T-t_k,k} \SigmaEps^2 - 2 \SigmaEps C_{T-t_k,k} \SigmaEps  & \preccurlyeq  (\sigma - \varepsilon)^2 \left\| C_{T-t_k,k} \right\| \Id_{2d} \\
    & \preccurlyeq  (\sigma - \varepsilon)^2 L_{T-t_k} \Id_{2d} \eqsp,
\end{align*}
where we used Proposition \ref{prop:smooth_prop} in the last line. It follows that,
\begin{multline*}
\tilde M_h(\varepsilon) 
 \preccurlyeq - \lambda_{\min} (A^\top + A) \Id_{2d} - 2 \alpha_{T-t_k} \left( \sigma^2 \wedge \varepsilon^2 \right) \Id_{2d} + (\sigma-\varepsilon)^2 L_{T-t_k} \Id_{2d}
 \\
 + h \left( \left\| A \right\|^2 + (\varepsilon^4 + \sigma^4) L_{T-t_k}^2 + 2 \left( \sigma^2 \vee \varepsilon^2 \right)\|A\| L_{T-t_k} \right) \Id_{2d}\eqsp.
\end{multline*}
Therefore, using that $\lambda_{\min} (A^\top + A) = -(a+1)^2$, 
 $\tilde M_h(\varepsilon)$ is negative when $h$ is chosen so that
\begin{align} \label{cond:h}
    h < \frac{ 2 \min_k \alpha_{t_k} \left(  \sigma^2 \wedge \varepsilon^2 \right) - (\sigma - \varepsilon)^2 \max_k L_{t_k}- (a+1)^2}{\|A\|^2 + (\varepsilon^4 + \sigma^4) \max_k L_{t_k}^2 + 2 \left( \sigma^2 \vee \varepsilon^2 \right) \|A\| \max_k L_{t_k}} \eqsp.
\end{align}
It follows that when $h$ satisfies \eqref{cond:h}, there exists $\delta_\varepsilon > 0$ such that 
\begin{align*}
    A_{1,k} \leq \sqrt{1 - h \delta_{\varepsilon}} \left\| \Xola_{t_{k}} - \Xbar_{t_{k}} \right\|_{L_2} \eqsp.
\end{align*}

For the second term $A_{2,k}$, note the backward drift function as  $b(t, u) = -A u  + \SigmaEps^2 \score_{T-t}(u)$ so that 
\begin{equation*}
    A_{2,k}^2 = \mathbb{E} \left[ \left\| \int_{t_k}^{t_{k+1}} \big(b(t, \Xola_t) - b(t_k, \Xola_{t_k}) \big) \rmd t \right\|^2 \right] \eqsp.
\end{equation*}
Applying Lemma \ref{app:lemma:fokker_planck} combining with Itô's formula, we obtain
\begin{align*}
\rmd b(t, \Xola_{t}) &= -A \rmd \Xola_{t} + \SigmaEps^2 \rmd \score_{T - t}(\Xola_{t}) \\
&= \left\{  A A \Xola_{t} - A \SigmaEps^2 \score_{T - t}(\Xola_{t}) + \SigmaEps^2 A^T \score_{T - t}(\Xola_{t}) \right\} \rmd t +  \left( A + \SigmaEps^2\nabla^2 \log p_{T - t}(\Xola_{t})  \right) \SigmaEps \rmd B_t \\
&=  A A \Xola_{t} \rmd t + \left( \SigmaEps^2 A^\top - A \SigmaEps^2 \right) \score_{T - t}(\Xola_{t}) \rmd t +  \left( A  + \SigmaEps^2\nabla^2 \log p_{T - t}(\Xola_{t})  \right) \SigmaEps \rmd B_t \eqsp.
\end{align*}
Using $H_s = A A \Xola_{s} + \left( \SigmaEps^2 A^\top - A \SigmaEps^2 \right) \score_{T - s}(\Xola_{s})$ and $K_s = \left( A  + \SigmaEps^2\nabla^2 \log p_{T - s}(\Xola_{s})  \right) \SigmaEps$ we have that 
\begin{align*}
    A_{2,k}^2 &= \mathbb{E} \left[ \left\| \int_{t_k}^{t_{k+1}} \int_{t_k}^t H_s \rmd s \rmd t 
    + \int_{t_k}^{t_{k+1}} \int_{t_k}^t K_s \rmd B_s \rmd t \right\|^2 \right] \eqsp, \\
    & \le 2 h \int_{t_k}^{t_{k+1}} \mathbb{E} \left[ \left\| \int_{t_k}^t H_s \rmd s \right\|^2 \rmd t \right] 
    + 2 h^2 \mathbb{E} \left[ \sup_{t \in [t_k, t_{k+1}]} \left\| \int_{t_k}^{t} K_s \rmd B_s \right\|^2 \right] \eqsp,
\end{align*}
by convexity of $\|\cdot\|^2$. Using again the convexity (or applying Cauchy-Schwartz inequality) we have $\mathbb{E}[\| \int_{t_k}^t H_s \rmd s \|^2] \le h \int_{t_k}^t \mathbb{E}[ \| H_s \|^2] \rmd s$ and then 
\begin{equation}\label{prf:A_2k}
    A_{2,k}^2 \le h^4 \sup_{t \in [t_k, t_{k+1}]} \mathbb{E} \left[ \| H_t \|^2 \right] 
    + 2 h^2 \mathbb{E} \left[ \sup_{t \in [t_k, t_{k+1}]} \left\| \int_{t_k}^{t} K_s \rmd B_s \right\|^2 \right] \eqsp.
\end{equation}

First we have for $t \in  [0,T]$, 
\begin{equation*}
    \mathbb{E} \left[ \| H_t \|^2 \right] \le 2 \|A\|^4_2 \mathbb{E} \left[ \| \Xola_t \|^2 \right] 
    + 2 
    \left\| \SigmaEps^2 A^\top - A \SigmaEps^2 \right\|^2 \mathbb{E} \left[ \left\| \score_{T - t}(\Xola_{t}) \right\|^2 
    \right] \eqsp, 
\end{equation*}
and by Lemma~\ref{prop:true_backward_bound} and Lemma~\ref{lem:l2:score} we get 
\begin{equation*} 
    \mathbb{E} \left[ \| H_t \|^2 \right] \le 2 \|A\|^4 B_\varepsilon 
    + 2 \left\| \SigmaEps^2 A^\top - A \SigmaEps^2 \right\|^2 \frac{2d}{\lambda_{\min}(\Sigma_{0, T-t})} \eqsp,
\end{equation*}
where $B_\varepsilon$ is defined in~\eqref{def:Beps}.
By Lemma~\ref{lem:prop-Sigma0t} we get 
\begin{equation} \label{prf:A_2k_1}
    \max_{t \in [0,T]} \mathbb{E} \left[ \| H_t \|^2 \right] \le 2 \|A\|^4 B_\varepsilon 
    + 4 d (a^2 \sigma^2 + \varepsilon)^2 \Lambda^*_\varepsilon(T) \eqsp,
\end{equation}
with 
$$
\Lambda^*_\varepsilon(T) = 
\min\Big\{\frac{2 a \big(1 + (a+1)^2 T\big)^2}{\min\{\varepsilon^2, \sigma^2\}}, \frac{4}{\sigma^2 \min\{a,1/a\} - \big(\sigma^2 \max\{a, 1/a\} + 5a^{-1}\varepsilon^{2}\big) \rme^{-2aT}}  \Big\}\eqsp,
$$ 
such that $\sup_{T > 0} \Lambda_\varepsilon^*(T) < +\infty$.

Now by Doob's inequality and Itô's isometry, we have 
\begin{equation*}
    \mathbb{E} \left[ \sup_{t \in [t_k, t_{k+1}]} \left\| \int_{t_k}^{t} K_s \rmd B_s \right\|^2 \right] 
    = \int_{t_k}^{t_{k+1}} \mathbb{E} \left[ \Big\| A + \SigmaEps^2 \nabla^2 \log p_{T - s}(\Xola_{s}) \Big\|_{F}^2 \right] \rmd s \eqsp,
\end{equation*}
where $\|\cdot\|_F$ is the Frobenius norm, so that 
\begin{equation*}
    \mathbb{E} \left[ \sup_{t \in [t_k, t_{k+1}]} \left\| \int_{t_k}^{t} K_s \rmd B_s \right\|^2 \right] 
    \le h d \max_{t \in [t_k, t_{k+1}]} \mathbb{E} \left[ 
    \Big\| A + \SigmaEps^2 \nabla^2 \log p_{T - t}(\Xola_{t}) \Big\|^2
    \right] \eqsp.
\end{equation*}
Using the $L_t$--Lipschitz continuous property of the score at time $t$ we have
\begin{equation} \label{prf:A_2k_2}
    \mathbb{E} \left[ \sup_{t \in [t_k, t_{k+1}]} \left\| \int_{t_k}^{t} K_s \rmd B_s \right\|^2 \right] 
    \le 2 h d \Big(\|A\|^2  + \max\{\sigma^4, \varepsilon^4\} \sup_{t \in [t_k, t_{k+1}]} L_{T-{t_k}}^2 \Big).
\end{equation}

Plugging \eqref{prf:A_2k_1} and \eqref{prf:A_2k_2} into \eqref{prf:A_2k} we obtain 
\begin{equation}
    A_{2,k}^2 \le C_a(\varepsilon) h^3 
\end{equation}
with $C_a(\varepsilon)$ defined in~\eqref{def:Ceps}.

Combining the bound on $A_{1,k}$ to the bound on $A_{2,k}$ yields, 
\begin{equation*}
  \left\| \Xola_{t_{k+1}} - \Xbar_{t_{k+1}} \right\|_{L_2} \leq  
  \sqrt{1 - h \delta_{\varepsilon}} \left\| \Xola_{t_{k}} - \Xbar_{t_{k}} \right\|_{L_2} 
  + h \sqrt{h} C_a(\varepsilon) \eqsp.
\end{equation*} 
Using that $\Xola_{0} - \Xbar_{0} = 0$ we have by induction
\begin{align*}
    \left\| \Xola_{t_{N}} - \Xbar_{t_{N}} \right\|_{L_2} & \leq 
    \sum_{k=0}^{N-1} \prod_{j=k+1}^{N-1} \big(1 - h \delta_{\varepsilon} \big)^{1/2} h \sqrt{h} C_a(\varepsilon) \eqsp, \\
    & \le \frac{2}{\delta_\varepsilon} \sqrt{h} C_a(\varepsilon) \eqsp, 
\end{align*}
since $\sqrt{1 - \delta_\varepsilon h} \le 1 - h \delta_\varepsilon/2$.  Letting $\Delta \to 0$ together with Fatou's lemma finishes the proof.
\end{proof}

\begin{lemma}[Approximation error]
\label{lem:error:approx}
   Assume that \ref{hyp:strong-log-concavity} and \ref{hyp:sup_approx} hold. Then, there exists $\delta_\varepsilon>0$ such that
   \begin{align*}
      \mathcal{W}_2 \left(\Lc\l( \Xbar^\infty_T\r) ,\Lc\l(\Xbar^\theta_T\r) \right) \leq  \frac{2}{\delta_{\varepsilon}} \max \left\{ \varepsilon^2 ,\sigma^2 \right\} M \eqsp.
   \end{align*}
\end{lemma}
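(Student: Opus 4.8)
The plan is to mirror the proof of Lemma~\ref{lem:discr_error_non_degenerate}, now comparing the exact interpolation $\Xbar^{\infty}$ with the generative interpolation $\Xbar^{\theta}$ and replacing the true score by its learned approximation. First I would use a synchronous coupling of $(\Xbar^{\infty}_t)_{t\in[0,T]}$ and $(\Xbar^{\theta}_t)_{t\in[0,T]}$ sharing the common initialization $\Xbar^{\infty}_0=\Xbar^{\theta}_0\sim\pi_\infty$, which gives $\mathcal{W}_2(\Lc(\Xbar^{\infty}_T),\Lc(\Xbar^{\theta}_T))\le\|\Xbar^{\infty}_T-\Xbar^{\theta}_T\|_{L_2}$. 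Fixing $\Delta\ge0$ with $t_N=T-\Delta$, the Brownian increments cancel in the subtraction of \eqref{eq:euler_backward_bis} and \eqref{eq:generative_model_bis}, so for $0\le k\le N-1$
\[
\Xbar^{\infty}_{t_{k+1}}-\Xbar^{\theta}_{t_{k+1}}=(\Id_{2d}-hA)\big(\Xbar^{\infty}_{t_k}-\Xbar^{\theta}_{t_k}\big)+h\SigmaEps^2\big(\score_{T-t_k}(\Xbar^{\infty}_{t_k})-s_\theta(t_k,\Xbar^{\theta}_{t_k})\big)\eqsp.
\]

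Next I would split the score term as $\score_{T-t_k}(\Xbar^{\infty}_{t_k})-\score_{T-t_k}(\Xbar^{\theta}_{t_k})$ plus $\score_{T-t_k}(\Xbar^{\theta}_{t_k})-s_\theta(t_k,\Xbar^{\theta}_{t_k})$. Writing $C_{T-t_k,k}=\int_0^1\nabla^2\log p_{T-t_k}\big(\Xbar^{\theta}_{t_k}+\gamma(\Xbar^{\infty}_{t_k}-\Xbar^{\theta}_{t_k})\big)\rmd\gamma$, the contribution of the first part together with the $(\Id_{2d}-hA)$ factor is exactly the quantity controlled through the symmetrized matrix $\tilde M_h(\varepsilon)$ in the analysis of $A_{1,k}$ in Lemma~\ref{lem:discr_error_non_degenerate}: expanding the squared $L_2$-norm, invoking Proposition~\ref{prop:strongly_log_conc_prop} for $\SigmaEps C_{T-t_k,k}\SigmaEps\preccurlyeq-\alpha_{T-t_k}(\sigma^2\wedge\varepsilon^2)\Id_{2d}$ and Proposition~\ref{prop:smooth_prop} for the off-diagonal blocks, one gets that, under the step-size condition \eqref{eq:admissible_h}, there is $\delta_\varepsilon>0$ (the one produced in Lemma~\ref{lem:discr_error_non_degenerate}) with
\[
\big\|(\Id_{2d}-hA)(\Xbar^{\infty}_{t_k}-\Xbar^{\theta}_{t_k})+h\SigmaEps^2\big(\score_{T-t_k}(\Xbar^{\infty}_{t_k})-\score_{T-t_k}(\Xbar^{\theta}_{t_k})\big)\big\|_{L_2}\le\sqrt{1-h\delta_\varepsilon}\,\big\|\Xbar^{\infty}_{t_k}-\Xbar^{\theta}_{t_k}\big\|_{L_2}\eqsp.
\]
For the second part, Assumption~\ref{hyp:sup_approx} (read for the true score, as noted in the remark following Theorem~\ref{th:wass:epsilon}) together with $\|\SigmaEps^2\|=\max\{\varepsilon^2,\sigma^2\}$ yields $\big\|h\SigmaEps^2(\score_{T-t_k}(\Xbar^{\theta}_{t_k})-s_\theta(t_k,\Xbar^{\theta}_{t_k}))\big\|_{L_2}\le h\max\{\varepsilon^2,\sigma^2\}M$. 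The triangle inequality in $L_2$ then gives the recursion
\[
\big\|\Xbar^{\infty}_{t_{k+1}}-\Xbar^{\theta}_{t_{k+1}}\big\|_{L_2}\le\sqrt{1-h\delta_\varepsilon}\,\big\|\Xbar^{\infty}_{t_k}-\Xbar^{\theta}_{t_k}\big\|_{L_2}+h\max\{\varepsilon^2,\sigma^2\}M\eqsp.
\]

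Finally I would iterate from $k=0$ to $N-1$; since $\Xbar^{\infty}_0=\Xbar^{\theta}_0$ the homogeneous term vanishes and, using $\sqrt{1-h\delta_\varepsilon}\le1-h\delta_\varepsilon/2$,
\[
\big\|\Xbar^{\infty}_{t_N}-\Xbar^{\theta}_{t_N}\big\|_{L_2}\le h\max\{\varepsilon^2,\sigma^2\}M\sum_{k=0}^{N-1}\prod_{j=k+1}^{N-1}\sqrt{1-h\delta_\varepsilon}\le h\max\{\varepsilon^2,\sigma^2\}M\sum_{n\ge0}\big(1-h\delta_\varepsilon/2\big)^{n}=\frac{2}{\delta_\varepsilon}\max\{\varepsilon^2,\sigma^2\}M\eqsp.
\]
Letting $\Delta\to0$ and applying Fatou's lemma closes the proof. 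The only genuinely delicate step is re-deriving the one-step contraction factor $\sqrt{1-h\delta_\varepsilon}$, i.e. the negative-definiteness of $\tilde M_h(\varepsilon)$ under \eqref{eq:admissible_h}; but this has already been established for $A_{1,k}$ in Lemma~\ref{lem:discr_error_non_degenerate}, so here it needs only to be invoked, and the remaining manipulations are elementary.
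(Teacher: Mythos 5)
Your proposal is correct and follows essentially the same route as the paper: synchronous coupling with common initialization, a triangle-inequality split of the one-step update into the Lipschitz/contraction part (handled exactly as $A_{1,k}$ in Lemma~\ref{lem:discr_error_non_degenerate}, yielding the factor $\sqrt{1-h\delta_\varepsilon}$ under the step-size condition) and the approximation part bounded by $h\max\{\varepsilon^2,\sigma^2\}M$ via Assumption~\ref{hyp:sup_approx}, followed by the geometric-sum iteration and the limit $\Delta\to0$ with Fatou's lemma. No gaps.
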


\begin{proof}
Note that
\begin{align*}
    \mathcal{W}_2 \left(\Lc\l( \Xbar^\infty_T\r) ,\Lc\l(\Xbar^\theta_T\r) \right) & \leq \left\| \Xbar^\infty_T - \Xbar^\theta_T \right\|_{L_2} \eqsp.
\end{align*}
Using a decomposition similar to that in \ref{lem:discr_error_non_degenerate}, with $t_N = T - \Delta$, we obtain:
\begin{align*}
    & \left\| \Xbar^\infty_{t_{k+1}} - \Xbar^\theta_{t_{k+1}} \right\|_{L_2} \\
    & = \left\| \Xbar^\infty_{t_{k}} - \Xbar^\theta_{t_{k}}   + \int_{t_k}^{t_{k+1}} - A \left( \Xbar^\infty_{t_{k}} - \Xbar^\theta_{t_{k}}  \right)  + \Sigma^2_{\varepsilon} \left( \score_{T-t_{k}} \left( \Xbar^\infty_{t_{k}} \right) - s_{\theta} \left( T-t_k, \Xbar^{\theta}_{t_k} \right) \right)  \rmd t \right\|_{L_2} \\
    & \leq \left\| \Xbar^\infty_{t_{k}} - \Xbar^\theta_{t_{k}}   + \int_{t_k}^{t_{k+1}} - A \left( \Xbar^\infty_{t_k} - \Xbar^\theta_{t_{k}}  \right)  + \Sigma^2_{\varepsilon} \left( \score_{T-t_k} \left( \Xbar^\infty_{t_k} \right) - \score_{T-t_k} \left( \Xbar^\theta_{t_k} \right) \right)  \rmd t \right\|_{L_2} \\
    & \quad + \left\| \int_{t_k}^{t_{k+1}} \Sigma^2_{\varepsilon} \left( \score_{T-t_{k}} \left( \Xbar^{\theta}_{t_k} \right) - s_{\theta} \left( T-t_k, \Xbar^{\theta}_{t_k} \right) \right)  \rmd t \right\|_{L_2} \\
    & \leq B_{1,k} + B_{2,k} \eqsp.
\end{align*}
For the first term, note that, 
\begin{align*}
B_{1,k}^2 &= \left\| \left(\Id_{2d}-hA\right)\left(\Xbar^\infty_{t_{k}} - \Xbar^\theta_{t_{k}} \right) + h\SigmaEps^2 \left( \score_{T-t_k} \left( \Xbar^\infty_{t_{k}} \right) - \score_{T-t_k} \left( \Xbar^\theta_{t_{k}} \right) \right)\right\|_{L_2}^2\\
&= \left\| \left(\Id_{2d}-hA\right)\left(\Xbar^\infty_{t_{k}} - \Xbar^\theta_{t_{k}} \right)\right\|_{L_2}^2 + \left\|h\SigmaEps^2 \left( \score_{T-t_k} \left( \Xbar^\infty_{t_{k}} \right) - \score_{T-t_k} \left( \Xbar^\theta_{t_{k}} \right) \right)\right\|_{L_2}^2\\
& \hspace{3cm} + 2h \mathbb{E}\left[\left(\Xbar^\infty_{t_{k}} - \Xbar^\theta_{t_{k}}\right)^\top \left(\Id_{2d}-hA\right)^\top \SigmaEps^2 \left( \score_{T-t_k} \left( \Xbar^\infty_{t_{k}} \right) - \score_{T-t_k} \left( \Xbar^\theta_{t_{k}} \right) \right)\right]\eqsp.
\end{align*}
It follows that $B_{1,k}$ can be treated similarly to $A_{1,k}$. Using \ref{hyp:one-sided_lipschitz}, \ref{hyp:strong-log-concavity}, and for $h$ satisfying \eqref{cond:h}, we have
\begin{align*}
    B_{1,k} \leq \sqrt{1 - h \delta_{\varepsilon}} \left\| \Xbar^\infty_{t_{k}} - \Xbar^\theta_{t_{k}} \right\|_{L_2} \eqsp,
\end{align*}
where $\delta_{\varepsilon}$ is defined as in the proof of Lemma \ref{lem:discr_error_non_degenerate}. For $B_{2,k}$, using Assumption \ref{hyp:sup_approx}, we get 
\begin{align*}
B_{2,k} & \leq h \left\| \Sigma_{\epsilon} \right\|^2 M \leq h \max \left\{ \varepsilon^2,\sigma^2 \right\} M\eqsp.
\end{align*}
Finally, for $h$ satisfying \eqref{cond:h}, it follows from the same argument as in the proof of Lemma \ref{lem:discr_error_non_degenerate} that
\begin{align*}
    \left\| \Xbar^\infty_{t_N} - \Xbar^\theta_{t_N} \right\|_{L_2} &\leq  \frac{2}{\delta_{\varepsilon}} \max \left\{ \varepsilon^2 ,\sigma^2 \right\} M \eqsp.
\end{align*}
Taking the limit as $\Delta \to 0$, toghether with Fatou's lemma finishes the proof.
\end{proof}

\begin{lemma}[Mixing time]
\label{lem:error:mixing:eps}
    Assume that \ref{hyp:strong-log-concavity} holds. Then
    \begin{align*}
\mathcal{W}_2 \left( \mathcal{L} \left( \Xbar_T \right) , \mathcal{L} \left( \Xbar_T^{\infty} \right) \right) & \leq K_T e^{- a T} \Wc_2\left( \pi_{\rm data}\otimes\pi_v,\pi_\infty\right) \eqsp,
    \end{align*}
    with
        \begin{align*}
            K_T \eqdef \left(1+\mathrm{max}\{a+1;a(a+1)\}T\right) \eqsp.
    \end{align*}
\end{lemma}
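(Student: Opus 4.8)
The plan is to run a synchronous coupling between $(\Xbar_t)_{t\in[0,T]}$ and $(\Xbar^{\infty}_t)_{t\in[0,T]}$, started from a $\mathcal{W}_2$–optimal coupling of their initial laws, to observe that under \ref{hyp:strong-log-concavity} the one–step Euler map is non–expansive in $L_2$, and hence that the whole gap at time $T$ is controlled by the initial gap, which in turn decays through the forward contraction of Lemma~\ref{lem:forward_contraction}.

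Concretely, I would fix $\Delta\ge0$ with $t_N:=T-\Delta$ and drive the two interpolations \eqref{eq:euler_backward_bis} by the same Brownian motion, choosing the initialisation so that $\|\Xbar_0-\Xbar_0^{\infty}\|_{L_2}=\mathcal{W}_2(\mathcal{L}(\Xora_T),\pi_\infty)$; this is possible since $\Xbar_0\sim p_T=\mathcal{L}(\Xora_T)$ and $\Xbar_0^{\infty}\sim\pi_\infty$, and it does not depend on $\Delta$. By definition of the Wasserstein distance, $\mathcal{W}_2(\mathcal{L}(\Xbar_{t_N}),\mathcal{L}(\Xbar_{t_N}^{\infty}))\le\|\Xbar_{t_N}-\Xbar_{t_N}^{\infty}\|_{L_2}$, and since the Brownian increments cancel in the difference, a mean value argument applied to $\score_{T-t_k}=\nabla\log p_{T-t_k}$ gives the deterministic recursion
\[
\Xbar_{t_{k+1}}-\Xbar_{t_{k+1}}^{\infty}=\big(\Id_{2d}-hA+h\,\SigmaEps^2\,C_{T-t_k,k}\big)\big(\Xbar_{t_k}-\Xbar_{t_k}^{\infty}\big),\qquad C_{T-t_k,k}:=\int_0^1\nabla^2\log p_{T-t_k}\big(\Xbar_{t_k}^{\infty}+\gamma(\Xbar_{t_k}-\Xbar_{t_k}^{\infty})\big)\rmd\gamma.
\]
This is exactly the term $A_{1,k}$ analysed in the proof of Lemma~\ref{lem:discr_error_non_degenerate}: invoking Propositions~\ref{prop:strongly_log_conc_prop} and~\ref{prop:smooth_prop} (both valid under \ref{hyp:strong-log-concavity}) together with the admissible step size \eqref{eq:admissible_h}, there is $\delta_\varepsilon>0$ such that $\|\Xbar_{t_{k+1}}-\Xbar_{t_{k+1}}^{\infty}\|_{L_2}\le\sqrt{1-h\delta_\varepsilon}\,\|\Xbar_{t_k}-\Xbar_{t_k}^{\infty}\|_{L_2}\le\|\Xbar_{t_k}-\Xbar_{t_k}^{\infty}\|_{L_2}$.

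Iterating this non–expansiveness over $k=0,\dots,N-1$ yields $\|\Xbar_{t_N}-\Xbar_{t_N}^{\infty}\|_{L_2}\le\|\Xbar_0-\Xbar_0^{\infty}\|_{L_2}=\mathcal{W}_2(\mathcal{L}(\Xora_T),\pi_\infty)$, and Lemma~\ref{lem:forward_contraction} bounds the right–hand side by $K_T\rme^{-aT}\Wc_2(\pi_{\rm data}\otimes\pi_v,\pi_\infty)$ with $K_T=1+\max\{a+1;a(a+1)\}T$. Letting $\Delta\to0$ and using Fatou's lemma (or the $L_2$–continuity of $t\mapsto\Xbar_t$) transfers the estimate from $t_N$ to $T$, which is the claim. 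One could even retain the factor $(1-h\delta_\varepsilon)^{N/2}\le\rme^{-\delta_\varepsilon T/2}$ to obtain a sharper bound, but the stated one already follows from mere non–expansiveness.

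The argument is essentially bookkeeping; the only genuine content is the $L_2$–contractivity of the Euler map, which is already contained in Lemma~\ref{lem:discr_error_non_degenerate}. The point to be careful about is that this contractivity requires both the propagation of strong log-concavity—so that the block coming from $\SigmaEps^2 C_{T-t_k,k}$ is negative definite—and the propagation of log-smoothness—to control the $O(h)$ correction in $\tilde M_h(\varepsilon)$—together with the step-size restriction \eqref{eq:admissible_h} assumed in Theorem~\ref{th:wass:epsilon}. For $\varepsilon=0$ the position block of $\SigmaEps^2 C_{T-t_k,k}$ degenerates and this step breaks down, which is precisely why the general case needed the separate treatment of Lemma~\ref{lem:error:mixing}.
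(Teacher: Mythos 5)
Your proposal follows essentially the same route as the paper's proof: a synchronous coupling of the two interpolations initialized at a $\mathcal{W}_2$-optimal coupling of $p_T$ and $\pi_\infty$, one-step non-expansiveness of the Euler map obtained from the $A_{1,k}$ analysis of Lemma~\ref{lem:discr_error_non_degenerate}, iteration over the grid, and finally the forward contraction of Lemma~\ref{lem:forward_contraction}. You are in fact slightly more careful than the paper's statement in making explicit that the non-expansiveness relies on the step-size restriction~\eqref{eq:admissible_h}, which the paper uses implicitly through its reference to $\delta_k$.
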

\begin{proof}
Consider a synchronous coupling of the continuous‐time interpolations  $( \Xbar_t)_{t \in [0,T]}$ and $(\Xbar^{\infty}_t)_{t \in [0,T]}$, with initialization 
\begin{align*}
    \mathcal{W}_2 \left(\pi_{\infty} , \mathcal{L} \left( \Xora_T \right) \right) = \left\| \Xbar_0 - \Xbar_0^{\infty} \right\|_{L_2} \eqsp.
\end{align*}
By definition of the $\mathcal{W}_2$ distance, 
\begin{align*}
    \mathcal{W}_2 \left( \mathcal{L} \left( \Xbar_T \right) , \mathcal{L} \left( \Xbar_T^{\infty} \right) \right) & \leq \left\| \Xbar_T - \Xbar_T^{\infty} \right\|_{L_2} \eqsp.
\end{align*}
For $t \in [t_k,t_{k+1}]$ and with $t_N = T-\Delta$ we have that,
\begin{align*}
    & \left\| \Xbar_{t_{k+1}} - \Xbar_{t_{k+1}}^{\infty} \right\|_{L_2} \\
    & = \left\| \Xbar_{t_{k}} - \Xbar_{t_{k}}^{\infty}   + \int_{t_k}^{t_{k+1}} - A \left( \Xbar_{t_k} - \Xbar_{t_{k}}^{\infty}  \right)  + \Sigma^2 \left( \score_{T-t_k} \left( \Xbar_{t_k} \right) - \score_{T-t_k} \left( \Xbar_{t_k}^{\infty} \right) \right)  \rmd t \right\|_{L_2} \\
    & \leq \left\| \Xbar_{t_{k}} - \Xbar_{t_{k}}^{\infty} \right\|_{L_2} \delta_k \eqsp,
\end{align*}
where $\delta_k$ is defined as in \eqref{cond:h}. As a consequence,
\begin{align*}
   \left\| \Xbar_{T} - \Xbar_{T}^{\infty} \right\|_{L_2}  & \leq \left\| \Xbar_{0} - \Xbar_{0}^{\infty} \right\|_{L_2} \prod_{\ell = 0}^{N-1} \delta_\ell \eqsp,
\end{align*}
where we let $\Delta \to 0$ together with Fatou's lemma. Finally, using Lemma \ref{lem:forward_contraction}, yields
\begin{align*}
    \left\| \Xbar_{0} - \Xbar_{0}^{\infty} \right\|_{L_2} =    \mathcal{W}_2 \left(\pi_{\infty} , \mathcal{L} \left( \Xora_T \right) \right)  \leq K_T e^{- a T} \Wc_2\left( \pi_{\rm data}\otimes\pi_v,\pi_\infty\right) \eqsp,
\end{align*}
which finishes the proof.
\end{proof}

\section{Technical Lemmata}

\begin{lemma} \label{lem:pi_data_sub_gaussian}
Assume that \ref{hyp:one-sided_lipschitz} holds. Then, the data distribution $p_{\rm data}(x)\propto \exp(-(V(x)+H(x)))$ has sub-Gaussian tails, \ie, there exist constants $C,\kappa>0$ such that
\begin{align*}
    p_{\rm data}(x) \le C \, \exp(-\kappa \|x\|^2)\,, \qquad x\in\R^d.
\end{align*}
In particular, $\pi_{\rm data}$ admits finite moments of all orders.
\end{lemma}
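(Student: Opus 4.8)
The plan is to use the strong convexity of $V$ to obtain a quadratic lower bound on $V$, absorb the Lipschitz perturbation $H$ into that bound, and thereby deduce a Gaussian upper envelope on the unnormalized density $e^{-(V+H)}$.

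First I would record the quadratic minorization. Since $\nabla^2 V(x)\succeq\alpha\Id_d$ with $\alpha>0$, the potential $V$ is $\alpha$-strongly convex; in particular it is coercive, so it attains its infimum, and strict convexity makes the minimizer $x^\star\in\R^d$ unique, with $\nabla V(x^\star)=0$. A second-order Taylor expansion with integral remainder then gives, for all $x\in\R^d$,
\[
V(x)\ \ge\ V(x^\star)+\frac{\alpha}{2}\|x-x^\star\|^2 .
\]
By the $L$-Lipschitz property of $H$ (first bullet of Assumption~\ref{hyp:one-sided_lipschitz}$(i)$), $H(x)\ge H(x^\star)-L\|x-x^\star\|$, hence, setting $c_0:=V(x^\star)+H(x^\star)$,
\[
V(x)+H(x)\ \ge\ c_0+\frac{\alpha}{2}\|x-x^\star\|^2-L\|x-x^\star\| .
\]

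Next I would complete the square and switch to $\|x\|$. Writing $r=\|x-x^\star\|\ge0$, the identity $\frac{\alpha}{2}r^2-Lr=\frac{\alpha}{4}r^2+\frac{\alpha}{4}\bigl(r-\tfrac{2L}{\alpha}\bigr)^2-\frac{L^2}{\alpha}$ yields $\frac{\alpha}{2}r^2-Lr\ge\frac{\alpha}{4}r^2-\frac{L^2}{\alpha}$, and the elementary bound $\|x-x^\star\|^2\ge\frac12\|x\|^2-\|x^\star\|^2$ gives
\[
V(x)+H(x)\ \ge\ \frac{\alpha}{8}\|x\|^2-c_1 ,
\]
with $c_1:=-c_0+\frac{L^2}{\alpha}+\frac{\alpha}{4}\|x^\star\|^2$. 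This already shows $Z:=\int_{\R^d}e^{-(V(x)+H(x))}\,\rmd x<\infty$ (and $Z>0$ since the integrand is positive and continuous), so $p_{\rm data}=e^{-(V+H)}/Z$ is a bona fide probability density, and
\[
p_{\rm data}(x)\ \le\ \frac{e^{c_1}}{Z}\,\exp\!\Bigl(-\frac{\alpha}{8}\|x\|^2\Bigr),\qquad x\in\R^d ,
\]
which is the asserted sub-Gaussian tail with $C=e^{c_1}/Z$ and $\kappa=\alpha/8$. Finiteness of all moments then follows since, for every $m\in\mathbb N$, $\int_{\R^d}\|x\|^m\,p_{\rm data}(x)\,\rmd x\le C\int_{\R^d}\|x\|^m e^{-\kappa\|x\|^2}\,\rmd x<\infty$.

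I do not expect a genuine obstacle in this lemma: the only points needing a little care are the existence and uniqueness of the minimizer $x^\star$ (a standard consequence of strong convexity plus continuity) and the bookkeeping of constants when completing the square and passing from $\|x-x^\star\|$ to $\|x\|$. Note also that part $(ii)$ of Assumption~\ref{hyp:one-sided_lipschitz} is not needed here—only the decomposition $p_{\rm data}\propto e^{-(V+H)}$ with $V$ strongly convex and $H$ Lipschitz is used.
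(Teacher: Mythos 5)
Your proof is correct and follows essentially the same route as the paper's: a quadratic lower bound on $V$ at its unique minimizer from $\alpha$-strong convexity, a linear lower bound on $H$ from its Lipschitz property, and an absorption of the linear term into the quadratic one (you complete the square where the paper invokes Young's inequality, which is the same estimate), arriving at the same exponent $\kappa=\alpha/8$. Your added remark on the finiteness and positivity of the normalizing constant $Z$ is a small but welcome extra precision.
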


\begin{proof}
By $\alpha$–strong convexity of $V$, for all $x,y\in\R^d$,
$$
V(x) \ge V(y) + \nabla V(y)^\top(x-y) + \frac{\alpha}{2}\|x-y\|^2 \eqsp.
$$
Let $x^*$ denote the unique minimizer of $V$, so that $\nabla V(x^*)=0$.  
Then,
$$
V(x) \ge V(x^*) + \frac{\alpha}{2}\|x-x^*\|^2
\ge \frac{\alpha}{4}\|x\|^2 - c_1 \eqsp,
$$
for some constant $c_1 \in \mathbb{R}$. Since $H$ is $L$–Lipschitz, we have
$$
H(x) \ge H(x^*) - L\|x-x^*\|
\ge -L\|x\| + c_2 \eqsp,
$$
for some $c_2 \in \mathbb{R}$. Combining these two inequalities yields, for some $C \in \mathbb{R}$,
$$
V(x)+H(x) \ge \frac{\alpha}{4}\|x\|^2 - L\|x\| + C \eqsp.
$$
Using Young’s inequality $L\|x\|\le \alpha\|x\|^2/8+2L^2/\alpha$,
we obtain
$$
V(x)+H(x) \ge \frac{\alpha}{8}\|x\|^2 - \frac{2L^2}{\alpha} + C \eqsp.
$$
Hence, up to a multiplicative constant,
$$
p_{\rm data}(x)
\;\propto\;
\exp(-(V(x)+H(x)))
\leq
C'\,\exp\!\big(-\frac{\alpha}{8}\|x\|^2\big) \eqsp,
$$
for some $C' >0$ which concludes the proof.
\end{proof}
\begin{lemma} \label{lem:info_fisher_finie}
Assume that \ref{hyp:one-sided_lipschitz} holds and that there exist $m \in \mathbb{N}$ and $C>0$ such that, for all $x \in \mathbb{R}^{d}$,
\begin{align} \label{eq:lem:info_fisher_finie:growth_cond}
\left\|\nabla V(x)\right\| \le C \left(1+\left\|x\right\|^{m}\right) \eqsp.
\end{align}
Then, the relative Fisher Information between $\pi_0 = \pi_{\rm data} \otimes \pi_v $ (\textit{i.e.} the initialization of the stochastic process defined in \eqref{eq:forward_SDE}) and $\pi_{\infty}$ is finite, \ie
  \begin{align*}
\mathcal{I}(\pi_{0} | \pi_{\infty}) \eqdef
\int \left\| \nabla \log \left( \frac{\rmd \pi_{0}}{\rmd \pi_{\infty}}(u) \right) \right\|^2 \pi_{0} (\rmd u) <\infty \eqsp.
\end{align*}
\end{lemma}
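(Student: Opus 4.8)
The plan is to show that the integrand defining $\mathcal{I}(\pi_0\mid\pi_\infty)$ is dominated by a polynomial in $\|u\|$, and then to integrate using that $\pi_0$ has finite moments of every order.

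First I would write out the relative score. By Lemma~\ref{prop:convergence-fwd-flow} and~\eqref{eq:sigma_infinity}, $\pi_\infty$ is the centered Gaussian on $\R^{2d}$ with covariance $\Sigma_\infty$, which is positive definite (hence invertible, with $\|\Sigma_\infty^{-1}\|<\infty$); its density $p_\infty$ is everywhere positive and smooth, with $\nabla\log p_\infty(u)=-\Sigma_\infty^{-1}u$. The density of $\pi_0=\pi_{\rm data}\otimes\pi_v$ factorizes as $p_0(x,v)=p_{\rm data}(x)\,\varphi_v(v)$, with $\varphi_v$ the density of $\mathcal{N}(0,v^2\Id_d)$ and $p_{\rm data}=\exp(-(V+H))>0$ everywhere; since $V\in C^{2}$ while $H$ is $L$-Lipschitz — hence, by Rademacher's theorem, differentiable Lebesgue-a.e.\ with $\|\nabla H\|\le L$ — the function $\log p_0$ is differentiable $\pi_0$-a.e., with
\[
\nabla\log p_0(x,v)=\big(-\nabla V(x)-\nabla H(x),\ \nabla\log\varphi_v(v)\big)^{\!\top},
\]
where $v\mapsto\nabla\log\varphi_v(v)$ is linear. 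Consequently, for $\pi_0$-a.e.\ $u=(x,v)^\top$,
\[
\nabla\log\frac{\rmd\pi_0}{\rmd\pi_\infty}(u)=\nabla\log p_0(u)+\Sigma_\infty^{-1}u .
\]

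Next I would bound this quantity. Using $\|a+b\|^2\le 2\|a\|^2+2\|b\|^2$ repeatedly, together with the growth bound~\eqref{eq:lem:info_fisher_finie:growth_cond} on $\|\nabla V\|$, the estimate $\|\nabla H\|\le L$, the linearity of $\nabla\log\varphi_v$, and $\|\Sigma_\infty^{-1}u\|\le\|\Sigma_\infty^{-1}\|\,\|u\|$, one gets a constant $C'>0$ such that, for $\pi_0$-a.e.\ $u=(x,v)^\top$,
\[
\Big\|\nabla\log\tfrac{\rmd\pi_0}{\rmd\pi_\infty}(u)\Big\|^2\le C'\big(1+\|x\|^{2m}+\|x\|^2+\|v\|^2\big),
\]
i.e.\ a polynomial in $\|x\|$ and $\|v\|$. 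Integrating against $\pi_0=\pi_{\rm data}\otimes\pi_v$ and invoking Lemma~\ref{lem:pi_data_sub_gaussian} — which gives that $\pi_{\rm data}$ has sub-Gaussian tails and therefore finite polynomial moments of all orders — together with the finiteness of all Gaussian moments of $\pi_v$, the right-hand side integrates to a finite quantity, so $\mathcal{I}(\pi_0\mid\pi_\infty)<\infty$.

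The only slightly delicate point is the a.e.\ differentiability of $\log(\rmd\pi_0/\rmd\pi_\infty)$ and the identification of its gradient in the presence of the merely Lipschitz term $H$; this is exactly what Rademacher's theorem supplies, and once it is in place the remainder is a routine moment estimate, so I do not anticipate any genuine obstacle.
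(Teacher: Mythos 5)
Your proposal is correct and follows essentially the same route as the paper: write the relative score as $\nabla\log p_0(u)+\Sigma_\infty^{-1}u$, bound its squared norm polynomially using the growth condition on $\nabla V$, the $L$-Lipschitz bound on $H$, and the Gaussian form of $\pi_v$ and $\pi_\infty$, then integrate via the sub-Gaussianity of $\pi_{\rm data}$ from Lemma~\ref{lem:pi_data_sub_gaussian}. The only difference is that you explicitly invoke Rademacher's theorem to justify the a.e.\ differentiability of $H$, a point the paper's proof leaves implicit.
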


\begin{proof}
    From Assumption~\ref{hyp:one-sided_lipschitz}, together with the fact that $\pi_0 = \pi_{\rm data} \otimes \pi_v$ and $\pi_v \sim \mathcal{N} \left( 0_d , v^2 \Id_d\right)$,
    \begin{align*}
        p_0(u) & = p_{\rm data}(x) \mathcal{N}(y ; 0_d, v^2 \Id_d) \propto \rme^{-(V(x) + H(x))} \rme^{- \frac{\| y \|^2}{2v^2} } \eqsp.
    \end{align*}
    Therefore, the relative Fisher Information satisfies
    \begin{align*}
       \mathcal{I}(\pi_{0} | \pi_{\infty}) & = \mathbb{E} \left[ \left\| -\begin{pmatrix}
            \nabla V(\ora X_0) + \nabla H(\ora X_0) \\
            v^{-2} \ora V_0
        \end{pmatrix}   + \Sigma_{\infty}^{-1} \begin{pmatrix}
            \ora X_0 \\ \ora V_0
        \end{pmatrix} \right\|^2\right] \\
        & \leq 2 \mathbb{E} \left[ \left\| -\begin{pmatrix}
            \nabla V(\ora X_0) + \nabla H(\ora X_0) \\
            v^{-2} \ora V_0
        \end{pmatrix}   \right\|^2 \right] + 2 \mathbb{E} \left[ \left\| \Sigma_{\infty}^{-1} \begin{pmatrix}
            \ora X_0 \\ \ora V_0
        \end{pmatrix} \right\|^2\right]\eqsp. 
    \end{align*}
    By Lemma \ref{lem:pi_data_sub_gaussian}, $\pi_{\rm data}$ has sub-Gaussian tails, hence
    \begin{align*}
\mathbb{E} \left[ \left\| \Sigma_{\infty}^{-1} \begin{pmatrix}
            \ora X_0 \\ \ora V_0
        \end{pmatrix} \right\|^2\right] < \infty  \eqsp.
\end{align*}
Moreover,
    \begin{align*}
 \mathbb{E} \left[ \left\| -\begin{pmatrix}
            \nabla V(\ora X_0) + \nabla H(\ora X_0) \\
            v^{-2} \ora V_0
        \end{pmatrix}   \right\|^2 \right] \leq 2  \mathbb{E} \left[ \left\| 
            \nabla V(\ora X_0)   \right\|^2 \right] + 2 \mathbb{E} \left[ \left\| 
            \nabla H(\ora X_0)   \right\|^2 \right] + v^{-4} \mathbb{E} \left[ \left\| \ora V_0\right\|^2 \right]
            \end{align*}

Since $\ora V_0$ is Gaussian, $\mathbb{E} [ \| \ora V_0\|^2] < \infty$, and 
by Assumption~\ref{hyp:one-sided_lipschitz}, $H$ is $L$-Lipschitz, so that $\mathbb{E}[\| 
            \nabla H(\ora X_0) \|^2 ] \leq L^2$. 
Using \eqref{eq:lem:info_fisher_finie:growth_cond}, there exist $m\in\mathbb N$ and $C>0$ such that
$$  \mathbb{E} \left[ \left\|\nabla V(\ora X_0) \right \| \right] \le C \left(1+ \mathbb{E} \left[ \left\|\ora X_0 \right\|^{m} \right] \right) < \infty \eqsp,$$
using sub-Gaussianity of $\pi_{\rm data}$, which concludes the proof.
\end{proof}

\begin{lemma} \label{prop:true_backward_bound}
        Assume that $(\Xora_t)_{t \in [0,T]}$ is solution to \eqref{eq:forward_SDE-eps} and that $\Xora_0$ admits a second order moment, then for all $0\leq t\leq T$, then for all $\varepsilon \geq 0$,
        \begin{align}
            \label{eq:def-B}
            \begin{split}
                \left\| \Xola_{t} \right\|_{L_2}^2
                &\leq \big(1 + (a+1)^2(T-t) \big)^2 \rme^{-2a(T-t)} \left\| \Xora_{0} \right\|_{L_2}^2 
                + \frac{d}{2} \big(\sigma^2 \max\{a, 1/a\} + \frac{5 \varepsilon^2}{a}) =: B \eqsp.
            \end{split}
        \end{align}
    \end{lemma}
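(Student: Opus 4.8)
The plan is to reduce everything to a second-moment estimate on the \emph{forward} process via the time-reversal identity $\Xola_t \eqL \Xora_{T-t}$, so that $\l\| \Xola_{t} \r\|_{L_2}^2 = \l\| \Xora_{T-t} \r\|_{L_2}^2$. By Lemma~\ref{prop:convergence-fwd-flow} (equivalently, the mild formulation of \eqref{eq:forward_SDE-eps}), for any $s\in[0,T]$,
\begin{align*}
    \Xora_s = \rme^{sA}\Xora_0 + \int_0^s \rme^{(s-u)A}\SigmaEps\,\rmd B_u\eqsp,
\end{align*}
and the stochastic integral has zero mean and is independent of $\Xora_0$ (since $B$ is), so the cross term drops out in expectation and $\l\|\Xora_s\r\|_{L_2}^2 = \E\l[\l\|\rme^{sA}\Xora_0\r\|^2\r] + \E\l[\l\|\int_0^s \rme^{(s-u)A}\SigmaEps\,\rmd B_u\r\|^2\r]$.

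For the deterministic part I would use submultiplicativity of the operator norm together with the bound $\l\|\rme^{sA}\r\|\leq \l(1+(a+1)^2 s\r)\rme^{-as}$ from Lemma~\ref{matrix_A_decomposition}, which gives $\E[\l\|\rme^{sA}\Xora_0\r\|^2] \leq \l(1+(a+1)^2 s\r)^2\rme^{-2as}\l\|\Xora_0\r\|_{L_2}^2$. For the noise part, Itô's isometry identifies it with $\mathrm{Tr}\l(\int_0^s \rme^{uA}\SigmaEps^2\rme^{uA^\top}\,\rmd u\r) = \mathrm{Tr}(\Sigma_{0,s})$; using the identity $\Sigma_{0,s} = \Sigma_\infty - \rme^{sA}\Sigma_\infty\rme^{sA^\top}\preccurlyeq \Sigma_\infty$ from Lemma~\ref{prop:convergence-fwd-flow} and the fact that $\Sigma_{0,s}$ is a $(2\times 2)\otimes\Id_d$ matrix, I bound $\mathrm{Tr}(\Sigma_{0,s}) \leq 2d\,\lambda_{\max}(\Sigma_{0,s}) \leq 2d\l(\tfrac{\sigma^2}{4}\max\{a,1/a\} + \tfrac{5\varepsilon^2}{4a}\r) = \tfrac{d}{2}\l(\sigma^2\max\{a,1/a\} + \tfrac{5\varepsilon^2}{a}\r)$ by Lemma~\ref{lem:prop-Sigma0t}.

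Setting $s = T-t$ and summing the two estimates yields precisely the constant $B$ in \eqref{eq:def-B}. There is essentially no real obstacle: the only points requiring a word of care are recording the independence of the stochastic integral from $\Xora_0$ so that the two contributions decouple, and the boundary case $\varepsilon=0$, where Lemma~\ref{lem:prop-Sigma0t} is stated for $\varepsilon>0$ — but there one can simply compute $\mathrm{Tr}(\Sigma_\infty)$ directly from \eqref{eq:sigma_infinity}, which gives $\tfrac{d\sigma^2}{4}(a+1/a) \leq \tfrac{d\sigma^2}{2}\max\{a,1/a\}$, so the same bound holds.
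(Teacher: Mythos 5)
Your proposal is correct and follows essentially the same route as the paper: decompose $\Xora_{T-t}$ into the deterministic part $\rme^{(T-t)A}\Xora_0$ and an independent Gaussian noise part, bound the former via $\|\rme^{sA}\|$ from Lemma~\ref{matrix_A_decomposition} and the latter by $2d\,\lambda_{\max}(\Sigma_{0,s})$ via Lemma~\ref{lem:prop-Sigma0t}. The only cosmetic difference is that you pass through It\^o's isometry and $\mathrm{Tr}(\Sigma_{0,s})$ where the paper writes the noise as $\Sigma_{0,s}^{1/2}G$, and your explicit treatment of the boundary case $\varepsilon=0$ is a welcome extra care the paper relegates to a remark.
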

\begin{proof}
Note that, in distribution,
    \begin{align*}
        \Xora_t & \eqL \rme^{tA} \Xora_0 + \Sigma_{0,t}^{1/2} G \eqsp,
    \end{align*}
    with $\Xora_0 \sim \pi_{\rm data}\otimes\pi_v$, $G \sim \mathcal{N}\l( 0 , \Id_{2d} \r)$, and where $G$ and $\Xora_0$ are independent. Since $G$ and $\Xora_0$ are independent, using time-reversal and sub-multiplicativity of matrix norms, we have that
    \begin{align*}    
        \mathbb{E} \left[  \left\| \Xola_{T-t} \right\|^2 \right] = 
        \mathbb{E} \left[  \left\| \Xora_t \right\|^2 \right] 
        & = \mathbb{E} \left[  \left\| \rme^{tA} \Xora_0  \right\|^2 \right] + \mathbb{E} \left[  \left\| \Sigma_{0,t}^{1/2} G  \right\|^2 \right] \\
        & \leq 
        \l\|\rme^{tA}\r\|^2
        \mathbb{E} \left[ \left\| \Xora_0 \right\|^2 \right]  +  
        \l\|\Sigma_{0,t}^{1/2}\r\|^2
        \mathbb{E} \left[ \left\| G \right\|^2 \right]
        \\
        & = \l\|\rme^{tA}\r\|^2 \mathbb{E} \left[ \left\| \Xora_0 \right\|^2 \right]  
        + 2 d \lambda_{\max}(\Sigma_{0,t}) \eqsp.
     \end{align*}
     We conclude by applying Lemma~\ref{matrix_A_decomposition} to bound $\l\|\rme^{tA}\r\|^2$ and Lemma~\ref{lem:prop-Sigma0t} to bound $\lambda_{\max}(\Sigma_{0,t})$.
\end{proof}
\begin{remark}
    Lemma \ref{prop:true_backward_bound} holds true when $\Xola_t$ is defined as in \eqref{eq:forward_SDE} by setting $\varepsilon = 0$.
\end{remark}
\begin{lemma}
\label{lem:l2:score} 
Assume that $(\Xora_t)_{t \in [0,T]}$ is solution to \eqref{eq:forward_SDE-eps}, then,
    \begin{align*}
        \mathbb{E} \left[ \left\| \score_{T-t} \left(  \Xola_t \right)\right\|^2 \right] & \leq  \frac{2d}{\lambda_{\min}(\Sigma_{0,T-t})} \eqsp,
    \end{align*}
    where $\Sigma_{0,t}$ is defined in \eqref{eq:def-mu_0t-Sigma_st}. 
\end{lemma}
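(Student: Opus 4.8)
The plan is to reduce the claim to a bound on $\text{Tr}(\Sigma_{0,T-t}^{-1})$ via the classical representation of the score of a Gaussian-smoothed density as a conditional expectation (a form of Tweedie's identity, cf.~\cite{saumard2014log}). Write $s\eqdef T-t$. First I would use the time-reversal identity $\Xola_t\eqL\Xora_s$, which reduces the left-hand side to $\E[\|\score_s(\Xora_s)\|^2]=\E[\|\nabla\log p_s(\Xora_s)\|^2]$. Then, by Lemma~\ref{prop:convergence-fwd-flow}, $\Xora_s=\rme^{sA}\Xora_0+\xi_s$ where, conditionally on $\Xora_0$, $\xi_s\sim\mathcal N(0,\Sigma_{0,s})$; since this conditional law is independent of $\Xora_0$, one has $\xi_s\sim\mathcal N(0,\Sigma_{0,s})$ with $\xi_s\perp\Xora_0$. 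Hence $p_s(u)=\int p_0(u_0)\,\mathcal N(u;\rme^{sA}u_0,\Sigma_{0,s})\,\rmd u_0$, and the Gaussian kernel together with its gradient is locally uniformly integrable, so differentiation under the integral sign is licit.

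I would then differentiate and normalize. Since $\nabla_u\mathcal N(u;\rme^{sA}u_0,\Sigma_{0,s})=-\Sigma_{0,s}^{-1}(u-\rme^{sA}u_0)\,\mathcal N(u;\rme^{sA}u_0,\Sigma_{0,s})$, dividing $\nabla p_s(u)$ by $p_s(u)$ yields
\[
\score_s(u)=-\Sigma_{0,s}^{-1}\,\E\!\big[\Xora_s-\rme^{sA}\Xora_0\,\big|\,\Xora_s=u\big]=-\,\E\!\big[\Sigma_{0,s}^{-1}\xi_s\,\big|\,\Xora_s=u\big].
\]
Evaluating at $u=\Xora_s$ and using that conditional expectation is an $L_2$ contraction (conditional Jensen inequality),
\[
\E\!\big[\|\score_s(\Xora_s)\|^2\big]=\E\!\big[\big\|\E\big[\Sigma_{0,s}^{-1}\xi_s\,\big|\,\Xora_s\big]\big\|^2\big]\le\E\!\big[\|\Sigma_{0,s}^{-1}\xi_s\|^2\big]=\text{Tr}\big(\Sigma_{0,s}^{-1}\Sigma_{0,s}\Sigma_{0,s}^{-1}\big)=\text{Tr}\big(\Sigma_{0,s}^{-1}\big),
\]
where I used $\Sigma_{0,s}=\Sigma_{0,s}^\top$. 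Finally, since $\Sigma_{0,s}^{-1}$ is symmetric positive definite of size $2d$, $\text{Tr}(\Sigma_{0,s}^{-1})\le 2d\,\lambda_{\max}(\Sigma_{0,s}^{-1})=2d/\lambda_{\min}(\Sigma_{0,s})$; substituting $s=T-t$ finishes the proof.

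The main obstacle is entirely technical: making rigorous the passage $\nabla\log p_s(\Xora_s)=-\E[\Sigma_{0,s}^{-1}\xi_s\mid\Xora_s]$, i.e.\ the interchange of $\nabla$ and $\int$ and the identification of the conditional expectation (measurability, plus the fact that the posterior of $\Xora_0$ given $\Xora_s$ automatically has finite moments of all orders since the Gaussian factor in $u_0$ dominates $p_0(u_0)$). Notably, the final $L_2$ estimate only involves the Gaussian variable $\xi_s$, so the lemma holds with no moment assumption on $\Xora_0$ beyond well-posedness of \eqref{eq:forward_SDE-eps}. The bound is of course only informative when $\Sigma_{0,T-t}$ is invertible, which holds for $T-t>0$: for $\varepsilon>0$ this follows from $\Sigma_{0,s}\succcurlyeq\min\{\varepsilon^2,\sigma^2\}\int_0^s\rme^{rA}\rme^{rA^\top}\rmd r\succ 0$ (see Lemma~\ref{lem:prop-Sigma0t}), and for $\varepsilon=0$ from the hypoellipticity of the kinetic dynamics, the explicit expression for $\Sigma_{0,s}$ in Lemma~\ref{prop:convergence-fwd-flow} being positive definite for $s>0$.
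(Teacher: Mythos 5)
Your proof is correct and follows essentially the same route as the paper's: time reversal, the Tweedie-type representation $\score_s(u)=-\E[\Sigma_{0,s}^{-1}\xi_s\mid \Xora_s=u]$, conditional Jensen, and the bound $\mathrm{Tr}(\Sigma_{0,s}^{-1})\le 2d/\lambda_{\min}(\Sigma_{0,s})$ (which the paper leaves implicit). Your additional remarks on differentiation under the integral and invertibility of $\Sigma_{0,s}$ are sound but not needed beyond what the paper records.
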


\begin{proof}
By the time-reversal property, 
\begin{align*}
    \mathbb{E} \left[ \left\| \score_{T-t} \left(  \Xola_t \right)\right\|^2 \right] = \mathbb{E} \left[ \left\| \score_{T-t} \left(  \Xora_{T-t} \right)\right\|^2 \right] \eqsp.
\end{align*}
Note that
    \begin{align*}
    \score_{T-t}( \Xora_{T-t}) =  \mathbb{E} \left[ \Sigma_{0,T-t}^{-1} ( e^{(T-t)A} \Xora_0 - \Xora_{T-t} ) | \Xora_{T-t} \right] \eqsp,
\end{align*}
then, using Jensen's inequality and the tower property,
$$
\mathbb{E} \left[ \left\| \score_{T-t} \left(  \Xola_t \right)\right\|^2 \right] \leq \mathbb{E} \left[ \left\| \Sigma_{0,T-t}^{-1} \left(e^{(T-t)A} \Xora_0 - \Xora_{T-t}\right) \right\|^2 \right]\eqsp.
$$
Since $\Xora_t  \eqL \rme^{tA} \Xora_0 + \Sigma_{0,t}^{1/2} G$ with $\Xora_0 \sim \pi_{\rm data}\otimes\pi_v$, $G \sim \mathcal{N}\l( 0 , \Id_{2d} \r)$, and where $G$ and $\Xora_0$ are independent, we have
    $$
    \mathbb{E} \left[ \left\| \score_{T-t} \left(  \Xola_t \right)\right\|^2 \right] \leq \mathbb{E} \left[ \left\| \Sigma_{0,T-t}^{-1/2} G \right\|^2 \right] = \text{Tr} \left( \Sigma_{0,T-t}^{-1}\right)\eqsp,
    $$
    which completes the proof.
\end{proof}

\begin{lemma}
\label{app:lemma:fokker_planck}
Assume that $(\Xola_t)_{t \in [0,T]}$ is solution to the backward SDE associated with \eqref{eq:forward_SDE-eps}. Then, 
    \begin{align*}
        \rmd ( \nabla \log p_{T-t} (\Xola_t)) = A^\top \nabla \log p_{T-t}(\Xola_t)  \rmd t + \nabla^2\log p_{T-t}(\Xola_t) \SigmaEps \rmd B_t \eqsp.
    \end{align*}
\end{lemma}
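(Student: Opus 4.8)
The plan is to first derive a linear transport equation for the score field $\score_t = \nabla \log p_t$ out of the Fokker--Planck equation of the forward process \eqref{eq:forward_SDE-eps}, and then apply Itô's formula to $t \mapsto \score_{T-t}(\Xola_t)$ along the backward dynamics, the drift simplification coming entirely from that transport equation. Since $\pi_v$ is Gaussian, for $t>0$ the marginal $p_t$ is $C^\infty$, strictly positive, with all derivatives of at most Gaussian--times--polynomial growth (it is a nondegenerate Gaussian convolution of $\pi_0$ when $\varepsilon>0$, and this still holds for $\varepsilon=0$ by hypoellipticity of the kinetic OU generator); this justifies differentiating under the integral sign and, after a standard localisation, the use of Itô's formula. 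The forward Fokker--Planck equation is $\partial_t p_t = -\mathrm{Tr}(A) p_t - (Au)^\top \nabla p_t + \tfrac12 \mathrm{Tr}(\SigmaEps^2 \nabla^2 p_t)$ (using $\SigmaEps = \SigmaEps^\top$); dividing by $p_t$ and inserting $\nabla p_t = p_t \score_t$, $\nabla^2 p_t = p_t(\nabla^2 \log p_t + \score_t \score_t^\top)$ gives $\partial_t \log p_t = -\mathrm{Tr}(A) - (Au)^\top \score_t + \tfrac12 \mathrm{Tr}(\SigmaEps^2 \nabla^2 \log p_t) + \tfrac12 \score_t^\top \SigmaEps^2 \score_t$.

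Taking the gradient in $u$ and writing $J_t \eqdef \nabla^2 \log p_t$ (symmetric), the elementary identities $\nabla[(Au)^\top \score_t] = A^\top \score_t + J_t A u$, $\nabla[\score_t^\top \SigmaEps^2 \score_t] = 2 J_t \SigmaEps^2 \score_t$, and $\nabla \mathrm{Tr}(\SigmaEps^2 J_t) = w_t$, where $(w_t)_k \eqdef \mathrm{Tr}\!\big(\SigmaEps^2 \nabla^2 (\partial_{u_k}\log p_t)\big)$, yield
\begin{equation*}
\partial_t \score_t = -A^\top \score_t - J_t A u + \tfrac12 w_t + J_t \SigmaEps^2 \score_t \eqsp.
\end{equation*}
The third--order term $w_t$ is merely abbreviated here; it is precisely the object that cancels against the second--order Itô correction. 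Indeed, applying Itô's formula componentwise to $G(t) \eqdef \score_{T-t}(\Xola_t)$ along $\rmd \Xola_t = \big(-A\Xola_t + \SigmaEps^2 \score_{T-t}(\Xola_t)\big)\rmd t + \SigmaEps \rmd B_t$, and noting that $\nabla(\partial_{u_k}\log p_{T-t})$ is the $k$-th row of $J_{T-t}$, the martingale part of $\rmd G_k$ stacks over $k$ to exactly $\nabla^2 \log p_{T-t}(\Xola_t)\SigmaEps \rmd B_t$, while the drift of $\rmd G_k$ equals $-(\partial_r \score_r^{(k)})\big|_{r=T-t}(\Xola_t) + \big(J_{T-t}(\Xola_t)(-A\Xola_t + \SigmaEps^2 \score_{T-t}(\Xola_t))\big)_k + \tfrac12 (w_{T-t})_k$. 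Substituting the transport identity above (at $u = \Xola_t$, $r = T-t$), the contributions $J_{T-t}A\Xola_t$, $J_{T-t}\SigmaEps^2 \score_{T-t}$ and $\tfrac12 w_{T-t}$ all cancel, leaving drift $= A^\top \score_{T-t}(\Xola_t)$, which is the claimed identity.

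The routine parts are the linear--algebra manipulations and the Fokker--Planck computation; two points require care. First, the justification of Itô's formula for coefficients that are only locally bounded: this is handled by localising with $\tau_n = \inf\{t : \|\Xola_t\| \ge n\}$ — on $[0,\tau_n]$ all derivatives of $p_{T-t}$ appearing above are bounded, and $\tau_n \to \infty$ by nonexplosion of the backward SDE — and by restricting to times $t$ with $T - t > 0$, where $p_{T-t}$ is smooth. Second, the bookkeeping showing that the third--order term $w_t$ in $\partial_t \score_t$ is literally the same quantity as the second--order Itô correction, so that no third derivatives of $\log p$ survive in the final expression. I expect the latter to be the main conceptual obstacle: once it is noticed, the remainder is a short computation.
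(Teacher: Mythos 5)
Your proof is correct and follows essentially the same route as the paper: derive $\partial_t \log p_t$ from the forward Fokker--Planck equation, apply Itô's formula to $\score_{T-t}(\Xola_t)$ along the backward SDE, and observe that the transport, third-order, and $J\SigmaEps^2\score$ terms cancel against the Itô corrections, leaving only $A^\top\score_{T-t}$. The only difference is presentational — you write the transport PDE for $\score_t$ explicitly, whereas the paper keeps the corresponding terms grouped under a single gradient before invoking the same identities.
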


\begin{proof}
    The Fokker-Plank equation for the SDE defined in \eqref{eq:forward_SDE} yields, for $u \in \mathbb{R}^{2d}$,
\begin{align}
\label{eq:fk_forward}
    \partial_t p_t(u) = - \text{div}(Au p_t(u) ) + \frac{1}{2} \text{div}( \SigmaEps^2 \nabla p_t(u) ) \eqsp.
\end{align}
First, using the notation introduced in \eqref{eq:def:score},
\begin{align*}
    \text{div}(Aup_t(u)) & = \sum_{i=1}^{2d} \frac{\partial A u p_t(u)}{\partial u_i}  \\
    & = \sum_{i=1}^{2d} \sum_{j=1}^{2d} \frac{\partial}{{\partial u_i} } A_{ij} u_j p_t(u) \\
    & = \sum_{i=1}^{2d} A_{ii} p_t(u) + \sum_{i=1}^{2d} \sum_{j=1}^{2d}  A_{ij} u_j \frac{\partial}{\partial_i} p_t(u) \\
    & = \sum_{i=1}^{2d} A_{ii} p_t(u) + (Au)^\top \nabla p_t(u) \\
    & = \text{Tr}(A) p_t(u) + (Au)^\top \nabla p_t(u) \\
    & = p_t(u) \left(\text{Tr}(A)  + (Au)^\top \score_t(u) \right) \eqsp.
\end{align*}
Second, using the product rule for divergence,
\begin{align*}
   \frac{1}{2} \text{div}( \SigmaEps^2 \nabla p_t(u) ) & = \frac{1}{2} \text{div}( \SigmaEps^2 p_t(u) \score_t(u) ) \\
   & = \frac{1}{2} \text{div}( p_t(u)\SigmaEps^2 \score_t(u) ) \\
   & = \frac{1}{2} \left(p_t(u) \text{div}(\SigmaEps^2 \score_t(u)) + (\SigmaEps^2 \score_t(u))^\top \nabla p_t(u) \right) \\
   & = \frac{1}{2} p_t(u) \left( \text{div}(\SigmaEps^2 \score_t(u)) +  \score_t(u)^\top \SigmaEps^2 \score_t(u) \right) \eqsp.
\end{align*}
Hence, dividing \eqref{eq:fk_forward} by $p_t$ yields
\begin{align*}
   \partial_t \log p_t(u)= - \text{Tr}(A)  - (Au)^\top \score_t(u)  +  \frac{1}{2}  \left[\left( \text{div}(\SigmaEps^2 \score_t(u)) +  \score_t(u)^\top \SigmaEps^2 \score_t(u) \right)\right] \eqsp,
\end{align*}
so that, 
\begin{align*}
   \partial_t \log p_{T-t}(u)=  \text{Tr}(A)  + (Au)^\top \score_{T-t}(u)  -  \frac{1}{2}  \left[\left( \text{div}(\SigmaEps^2 \score_{T-t}(u)) + \score_{T-t}(u)^\top \SigmaEps^2 \score_{T-t}(u) \right)\right] \eqsp.
\end{align*}
Recall that the backward process can be written as 
\begin{align*}
    \rmd \Xola_t = (-A \Xola_t + \SigmaEps^2 \score_{T-t}( \Xola_t)) \rmd t + \SigmaEps \rmd B_t \eqsp.
\end{align*}
Hence, by Itô's formula,
\begin{align*}
    \rmd ( \score_{T-t} ( \Xola_t)) &= \partial_t \score_{T-t} (\Xola_t)\rmd t  + \nabla \score_{T-t}( \Xola_t) \rmd U_t + \frac{1}{2} \text{Tr} \left( \SigmaEps^2 \nabla^2 \score_{T-t}( \Xola_t) \right) \rmd t \\
    & = \nabla \partial_t \log p_{T-t}(\Xola_t)\rmd t - \nabla \score_{T-t}(\Xola_t) A \Xola_t \rmd t + \nabla \score_{T-t}(\Xola_t) \SigmaEps^2 \score_{T-t}( \Xola_t) \rmd t  \\
    & \quad +  \frac{1}{2} \text{Tr} \left( \SigmaEps^2 \nabla^2 \score_{T-t}( \Xola_t) \right) \rmd t  + \nabla \score_{T-t}(\Xola_t) \SigmaEps \rmd B_t \\
    & =  \nabla \left( \partial_t \log p_{T-t}(\Xola_t)  + \frac{1}{2} \score_{T-t}(\Xola_t)^\top \SigmaEps^2 \score_{T-t}(\Xola_t) + \frac{1}{2} \text{div}(\SigmaEps^2 \score_{T-t}(\Xola_t))  \right) dt  \\
    & \quad - \nabla \score_{T-t}(\Xola_t) A \Xola_t \rmd t  + \nabla \score_{T-t}(\Xola_t) \SigmaEps \rmd B_t \\
    & =  A^\top \score_{T-t}(\Xola_t)  \rmd t + \nabla \score_{T-t}(\Xola_t) \SigmaEps \rmd B_t \eqsp,
\end{align*}
which completes the proof and where we used that for $u \in \mathbb{R}^{2d}$, $2 \nabla^2 \log p_t(u) \Sigma_\varepsilon^2 \score_t(u) = \nabla (\score_t(u)^\top \Sigma_\varepsilon^2 \score_t(u)) $ and $\nabla \text{div}(\SigmaEps^2 \score_t(u)) = \nabla \text{Tr} (\SigmaEps^2 \nabla^2 \score_t(u)) $. Indeed, for $k \in \{1,...,2d \}$, with $g(u) = \nabla \log p_t(u)$, and therefore $g_i(u) = \frac{\partial}{\partial u_i} g(u)$
\begin{align*}
    \frac{\partial}{\partial u_k} \left( \nabla g(u)^\top \SigmaEps^2 \nabla g(u) \right) & = \frac{\partial}{\partial u_k} \sum_{i,j} g_i(u) \Sigma^2_{\varepsilon,ij} g_j(u) \\
    & = \sum_{i,j} \Sigma^2_{\varepsilon,ij} \left( g_j(u) \frac{\partial}{\partial u_k} g_i(u)  + g_i(u) \frac{\partial}{\partial u_k}g_j(u) \right) \\
    & = 2 \sum_{i=1}^{2d} \Sigma^2_{\varepsilon,ii} \left( g_i(u) \frac{\partial}{\partial u_k} g_i(u) \right) \\
    & = 2 \sum_{i=1}^{2d} \Sigma^2_{\varepsilon,ii} \left( \frac{\partial}{\partial u_i} g(u) \frac{\partial}{\partial u_k} \frac{\partial}{\partial u_i} g(u) \right) \\
    & = \left[ 2 \nabla^2 g(u) \SigmaEps^2 \nabla g(u) \right]_k \eqsp.
\end{align*}
\end{proof}

\begin{lemma}
\label{app:lemma:fokker_planck_renorm}
Assume that $(\Xola_t)_{t \in [0,T]}$ is solution to the backward SDE associated with \eqref{eq:forward_SDE-eps}. Then, 
    \begin{align}
    \label{eq:SDE-score-modif}
        \rmd ( \tilde \score_{T-t} (\Xola_t)) =  - \tilde A_{\epsilon}^\top \tilde \score_{T-t}(\Xola_t) \rmd t + \nabla^2 \log \tilde p_{T-t}(\Xola_t) \Sigma_{\epsilon} \rmd B_t \eqsp.
    \end{align}
\end{lemma}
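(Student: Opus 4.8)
The plan is to deduce the statement from Lemma~\ref{app:lemma:fokker_planck} by exploiting the affine link between the true and the modified score. Since $\tilde p_t = p_t/p_\infty$ and $p_\infty$ is the density of $\mathcal{N}(0_{2d},\Sigma_\infty)$, we have $\log p_\infty(u) = -\tfrac12 u^\top\Sigma_\infty^{-1}u + \mathrm{const}$, hence
\[
\tilde\score_t(u) = \score_t(u) + \Sigma_\infty^{-1}u, \qquad \nabla^2\log\tilde p_t(u) = \nabla^2\log p_t(u) + \Sigma_\infty^{-1},
\]
the second identity being exactly \eqref{eq:modifiedHess}. Because $u\mapsto\Sigma_\infty^{-1}u$ is linear, $\rmd(\Sigma_\infty^{-1}\Xola_t) = \Sigma_\infty^{-1}\rmd\Xola_t$, so it is enough to add $\Sigma_\infty^{-1}\rmd\Xola_t$ to the expression for $\rmd(\score_{T-t}(\Xola_t))$ supplied by Lemma~\ref{app:lemma:fokker_planck}.

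Substituting the backward SDE associated with \eqref{eq:forward_SDE-eps}, namely $\rmd\Xola_t = \big(-A\Xola_t + \Sigma_{\epsilon}^2\score_{T-t}(\Xola_t)\big)\rmd t + \Sigma_{\epsilon}\rmd B_t$, the martingale parts merge immediately:
\[
\nabla^2\log p_{T-t}(\Xola_t)\Sigma_{\epsilon} + \Sigma_\infty^{-1}\Sigma_{\epsilon} = \big(\nabla^2\log p_{T-t}(\Xola_t) + \Sigma_\infty^{-1}\big)\Sigma_{\epsilon} = \nabla^2\log\tilde p_{T-t}(\Xola_t)\,\Sigma_{\epsilon},
\]
which is the claimed diffusion coefficient. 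The drift then collects into $A^\top\score_{T-t}(\Xola_t) - \Sigma_\infty^{-1}A\Xola_t + \Sigma_\infty^{-1}\Sigma_{\epsilon}^2\score_{T-t}(\Xola_t)$, and the remaining task is to recognize this as $-\tilde A_{\epsilon}^\top\tilde\score_{T-t}(\Xola_t)$.

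For that last identification, note that $\Sigma_\infty$ and $\Sigma_{\epsilon}$ are symmetric, so $-\tilde A_{\epsilon}^\top = A^\top + \Sigma_\infty^{-1}\Sigma_{\epsilon}^2$; expanding $-\tilde A_{\epsilon}^\top\big(\score_{T-t}(\Xola_t)+\Sigma_\infty^{-1}\Xola_t\big)$ and subtracting the drift above leaves precisely $(A^\top\Sigma_\infty^{-1}+\Sigma_\infty^{-1}A+\Sigma_\infty^{-1}\Sigma_{\epsilon}^2\Sigma_\infty^{-1})\Xola_t = \Sigma_\infty^{-1}\big(A\Sigma_\infty+\Sigma_\infty A^\top+\Sigma_{\epsilon}^2\big)\Sigma_\infty^{-1}\Xola_t$. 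This term vanishes by the Lyapunov identity $A\Sigma_\infty+\Sigma_\infty A^\top+\Sigma_{\epsilon}^2 = 0$ satisfied by the stationary covariance, which follows by differentiating $\Sigma_{0,t}=\int_0^t\rme^{sA}\Sigma_{\epsilon}^2\rme^{sA^\top}\rmd s$ to get $\tfrac{\rmd}{\rmd t}\Sigma_{0,t}=A\Sigma_{0,t}+\Sigma_{0,t}A^\top+\Sigma_{\epsilon}^2$ and letting $t\to\infty$ with $\Sigma_{0,t}\to\Sigma_\infty$, or alternatively by a direct check from \eqref{eq:matrix_A} and \eqref{eq:sigma_infinity}. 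Recognizing this Lyapunov cancellation is the only non-mechanical step; assembling the martingale and drift parts yields \eqref{eq:SDE-score-modif} and concludes the proof.
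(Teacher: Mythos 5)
Your proof is correct, but it takes a genuinely different route from the paper. The paper proves the lemma from scratch: it writes the Fokker--Planck equation for $p_\infty$ (encoding stationarity as the vanishing of the generator applied to $p_\infty$), deduces a PDE for $\partial_t\log\tilde p_t$, and then applies Itô's formula directly to $\tilde\score_{T-t}(\Xola_t)$, mirroring the computation of Lemma~\ref{app:lemma:fokker_planck}. You instead \emph{transfer} the conclusion of Lemma~\ref{app:lemma:fokker_planck} through the affine relation $\tilde\score_t(u)=\score_t(u)+\Sigma_\infty^{-1}u$, so the martingale part merges for free via \eqref{eq:modifiedHess} and the whole proof reduces to one algebraic identity, the Lyapunov equation $A\Sigma_\infty+\Sigma_\infty A^\top+\Sigma_\epsilon^2=0$. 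That identity is exactly the stationarity of $\pi_\infty$ in covariance form, so the two proofs use the same underlying fact in different guises; your derivation of it (differentiating $\Sigma_{0,t}$ and letting $t\to\infty$, or direct verification from \eqref{eq:matrix_A} and \eqref{eq:sigma_infinity}) is sound, and the transposition step $-\tilde A_\epsilon^\top=A^\top+\Sigma_\infty^{-1}\Sigma_\epsilon^2$ correctly uses the symmetry of $\Sigma_\infty^{-1}$ and $\Sigma_\epsilon^2$. What your approach buys is brevity and a clean separation of concerns: all the Itô/Fokker--Planck work is done once in Lemma~\ref{app:lemma:fokker_planck}, and the modified-score version follows by linear algebra. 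What the paper's approach buys is self-containedness and a form that generalizes more readily if the reference density were not Gaussian (so that the correction to the score were not linear). One minor presentational point: it would be worth stating explicitly that the sign convention for the difference you compute (drift obtained minus drift claimed, or the reverse) is consistent, since the residual term $\Sigma_\infty^{-1}\bigl(A\Sigma_\infty+\Sigma_\infty A^\top+\Sigma_\epsilon^2\bigr)\Sigma_\infty^{-1}\Xola_t$ must vanish identically either way, which it does.
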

\begin{proof}
Recall that $p_{\infty}$ is the stationary distribution of \eqref{eq:forward_SDE} so that using Fokker-Planck equation we get, for $u \in \mathbb{R}^{2d}$,
\begin{align*}
0 =\ 
& - \operatorname{Tr}(A)
- (A u)^\top \nabla \log p_\infty(u) \\
& + \frac{1}{2} \left[
\operatorname{div}\left( \Sigma^2 \nabla \log p_\infty(u) \right)
+ \nabla \log p_\infty(u)^\top \Sigma^2 \nabla \log p_\infty(u)
\right] \eqsp.
\end{align*}
Using that $\tilde p_t = p_t/p_{\infty}$, and Fokker-Planck as in Lemma \ref{app:lemma:fokker_planck}
\begin{align*}
\partial_t \log \tilde p_t(u)  =\ 
& - (A u)^\top \tilde \score_{t}(u) \\
& + \frac{1}{2} \left[
\operatorname{div}\left( \Sigma^2 \tilde \score_t (u) \right)
+ \tilde \score_t(u)^\top \Sigma^2 \tilde \score_t(u)
\right] \\
&+ \tilde \score_t(u)^\top \Sigma^2 \nabla \log p_\infty (u) \eqsp.
\end{align*}
Using the definition of $\tilde A_{\epsilon}$, we have,
\begin{align*}
\partial_t \log \tilde p_t(u)  =\ 
& (\tilde A_{\epsilon} u)^\top \tilde \score_{t}(u) + \frac{1}{2} \left[
\operatorname{div}\left( \Sigma^2 \tilde \score_t (u) \right)
+ \tilde \score_t(u)^\top \Sigma^2 \tilde \score_t(u)
\right] \eqsp,
\end{align*}
and therefore,
\begin{align*}
& \partial_{t} \log \tilde p_{T-t}(u) = - (\tilde A_{\epsilon} u)^\top \tilde \score_{T-t}(u) - \frac{1}{2} \left[
\operatorname{div}\left( \Sigma^2 \tilde \score_{T-t} (u) \right)
+ \tilde \score_{T-t}(u)^\top \Sigma^2 \tilde \score_{T-t}(u)
\right] \eqsp.
\end{align*}
Recall that the modified backward process can be written as 
\begin{align*}
    \rmd \Xola_t = ( \tilde A_{\epsilon} \Xola_t + \Sigma^2 \tilde \score_{T-t}(\Xola_t)) \rmd t + \Sigma_{\epsilon} d B_t \eqsp.
\end{align*}
Hence, by Itô's formula,
\begin{align*}
    &\rmd ( \tilde \score_{T-t} (\Xola_t)) \\
    & = \partial_t \tilde \score_{T-t} (\Xola_t)\rmd t  + \nabla^2 \log \tilde p_{T-t}(\Xola_t) d \Xola_t + \frac{1}{2} \text{Tr} \left( \Sigma^2 \nabla^2 \tilde \score_{T-t}(\Xola_t) \right) \rmd t \\
    & = \nabla  \Big( \partial_t \log \tilde p_{T-t}(\Xola_t)\rmd t  +  \frac{1}{2}   \tilde \score_{T-t} (\Xola_t)^\top \Sigma^2 \tilde \score_{T-t} (\Xola_t) + \frac{1}{2} \text{div} \left( \Sigma^2 \tilde \score_{T-t}( \Xola_t) \right) \Big) \\
    & + \nabla^2 \log \tilde p_{T-t}(\Xola_t) \tilde A_{\epsilon} \Xola_t \rmd t + \nabla^2 \log \tilde p_{T-t}(\Xola_t) \Sigma_{\epsilon} \rmd B_t \\
    & = - \tilde A_{\epsilon}^\top \tilde \score_{T-t}(\Xola_t)  + \nabla^2 \log \tilde p_{T-t}(\Xola_t) \Sigma_{\epsilon} \rmd B_t \eqsp,
\end{align*}
which completes the proof. 
\end{proof}

\begin{lemma}
\label{lem:control_gradient} 
Let $\Delta$ be an arbitrary fixed positive constant, and assume that $(\Xola_t)_{t \in [0,T-\Delta]}$ is the solution to \eqref{eq:backward:modified}. Then, there exists a universal constant $C>0$ such that
    \begin{align*}
        \E\left[\left\|\nabla\log\tilde p_{T-t}\left(\Xola_{t}\right) - \nabla\log\tilde p_{T-t_k}\left(\Xola_{t_k}\right)\right\|^2\right]
        \leq C \left( g(t_{k+1}) - g(t_k) \right)
        \eqsp,
    \end{align*}
    for $t\in[t_k,t_{k+1}]$,
    with
    \begin{align}
    \label{eq:def:function-g}
        g(t) := \E\left[
        \left\|
            \nabla\log\tilde p_{T-t}\left(\Xola_t\right)
        \right\|^2 
        \right]\eqsp.
    \end{align}
\end{lemma}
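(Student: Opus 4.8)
The plan is to work with the stochastic differential equation satisfied by the evaluated modified score along the backward trajectory. Abbreviate $Z_t \eqdef \tilde{\score}_{T-t}(\Xola_t)$ and $\tilde{H}_t \eqdef \nabla^2\log\tilde{p}_{T-t}(\Xola_t)$ (symmetric), so that by Lemma~\ref{app:lemma:fokker_planck_renorm},
\[
\rmd Z_t = -\tilde{A}_\epsilon^\top Z_t\,\rmd t + \tilde{H}_t\SigmaEps\,\rmd B_t, \qquad g(t) = \E\bigl[\|Z_t\|^2\bigr].
\]
The quantity to be bounded is the one-step increment $\E[\|Z_t-Z_{t_k}\|^2]$ for $t\in[t_k,t_{k+1}]$. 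The idea is to split it into a drift contribution and a martingale contribution, and then to establish a de~Bruijn-type identity showing that the increment $g(t_{k+1})-g(t_k)$ simultaneously dominates both contributions; since $\Delta>0$ is fixed and $T-t\ge\Delta$ throughout, all the second moments appearing below are finite and the manipulations are justified by Lemma~\ref{prop:Lipschitz-propagation}.

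For the decomposition I would introduce the integrating factor $W_t \eqdef \rme^{(t-t_k)\tilde{A}_\epsilon^\top}Z_t$, which is a martingale on $[t_k,t_{k+1}]$ with $W_{t_k}=Z_{t_k}$ and $W_t - W_{t_k} = \int_{t_k}^t \rme^{(s-t_k)\tilde{A}_\epsilon^\top}\tilde{H}_s\SigmaEps\,\rmd B_s$. Writing $Z_t - Z_{t_k} = (\rme^{-(t-t_k)\tilde{A}_\epsilon^\top}-\Id_{2d})Z_{t_k} + \rme^{-(t-t_k)\tilde{A}_\epsilon^\top}(W_t-W_{t_k})$, and using $\|\rme^{-\tau\tilde{A}_\epsilon^\top}-\Id_{2d}\|\le \tau\|\tilde{A}_\epsilon\|\rme^{\tau\|\tilde{A}_\epsilon\|}$, $\|\rme^{-\tau\tilde{A}_\epsilon^\top}\|\le \rme^{\tau\|\tilde{A}_\epsilon\|}$, together with Itô's isometry and $\|MA\|_F\le\|M\|\,\|A\|_F$, one obtains for $h\le 1$ a universal constant $C>0$ with
\[
\E\bigl[\|Z_t - Z_{t_k}\|^2\bigr]\ \le\ C\,h^2\,g(t_k) + C\int_{t_k}^{t_{k+1}}\E\bigl[\|\tilde{H}_s\SigmaEps\|_F^2\bigr]\,\rmd s .
\]

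Next I would produce the de~Bruijn-type identity. Let $\mathfrak{M}$ be the symmetric positive definite matrix of Lemma~\ref{lem:discr_error-modified_score} (from \cite{monmarche2023almost,achleitner2015large}), which for any $\eta\in(0,a)$ satisfies $\mathfrak{M}\tilde{A}_\epsilon + \tilde{A}_\epsilon^\top\mathfrak{M}\preccurlyeq -2(a-\eta)\mathfrak{M}$; conjugating by $\mathfrak{M}^{-1}$ gives $\tilde{A}_\epsilon\mathfrak{M}^{-1} + \mathfrak{M}^{-1}\tilde{A}_\epsilon^\top\preccurlyeq -2(a-\eta)\mathfrak{M}^{-1}$. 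Applying Itô's formula to $t\mapsto Z_t^\top\mathfrak{M}^{-1}Z_t$ and taking expectations, with $g_{\mathfrak{M}}(t)\eqdef\E[Z_t^\top\mathfrak{M}^{-1}Z_t]$,
\[
g_{\mathfrak{M}}'(t) = -\E\bigl[Z_t^\top(\mathfrak{M}^{-1}\tilde{A}_\epsilon^\top + \tilde{A}_\epsilon\mathfrak{M}^{-1})Z_t\bigr] + \E\bigl[\mathrm{Tr}(\SigmaEps\tilde{H}_t\mathfrak{M}^{-1}\tilde{H}_t\SigmaEps)\bigr] \ \ge\ 2(a-\eta)\,g_{\mathfrak{M}}(t) + \lambda_{\min}(\mathfrak{M}^{-1})\,\E\bigl[\|\tilde{H}_t\SigmaEps\|_F^2\bigr],
\]
where the last step uses $\tilde{H}_t\SigmaEps^2\tilde{H}_t\succcurlyeq 0$. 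In particular $g_{\mathfrak{M}}$ is non-decreasing, and integrating over $[t_k,t_{k+1}]$ gives both $h\,g_{\mathfrak{M}}(t_k)\lesssim g_{\mathfrak{M}}(t_{k+1})-g_{\mathfrak{M}}(t_k)$ and $\int_{t_k}^{t_{k+1}}\E[\|\tilde{H}_s\SigmaEps\|_F^2]\,\rmd s\lesssim g_{\mathfrak{M}}(t_{k+1})-g_{\mathfrak{M}}(t_k)$. Since $\|\cdot\|^2$ and $(\cdot)^\top\mathfrak{M}^{-1}(\cdot)$ are equivalent up to the eigenvalues of $\mathfrak{M}$, in particular $g(t_k)\lesssim g_{\mathfrak{M}}(t_k)$, substituting into the display of the previous paragraph yields $\E[\|Z_t-Z_{t_k}\|^2]\le C\bigl(g_{\mathfrak{M}}(t_{k+1})-g_{\mathfrak{M}}(t_k)\bigr)$, which is the claimed estimate (either after returning to the Euclidean norm, or, equivalently, by running the whole analysis of Lemma~\ref{lem:discr_error-modified_score} with $g_{\mathfrak{M}}$ in place of $g$, since it is only through the finiteness of $g(t_N)$ — equivalently $g_{\mathfrak{M}}(t_N)$, controlled by Assumption~\ref{hyp:fisher_info} — that $g$ enters).

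I expect the de~Bruijn step to be the main obstacle. When $\varepsilon=0$ the Brownian motion acts only on the velocity coordinate, so $\tilde{A}_\epsilon+\tilde{A}_\epsilon^\top$ is merely negative semidefinite (and in fact has a zero eigenvalue in the position direction), and the naive Euclidean computation does not give $g'(t)\gtrsim g(t)$; recovering a strict dissipation rate — and hence the absorption of the $h^2 g(t_k)$ term — genuinely requires the hypocoercive twisted norm $\mathfrak{M}$. A secondary, more clerical, difficulty is keeping all constants independent of $k$, $N$, $t$ and (for $h\le 1$) of $h$: every $\rme^{\tau\|\tilde{A}_\epsilon\|}$ and every eigenvalue ratio of $\mathfrak{M}$ has to be collected into the single universal constant $C$.
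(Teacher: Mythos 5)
Your proposal follows the same overall strategy as the paper's proof: both start from the SDE for $Y_t=\tilde\score_{T-t}(\Xola_t)$ given by Lemma~\ref{app:lemma:fokker_planck_renorm}, bound the one-step increment by $\int_{t_k}^{t_{k+1}}\E[\|Y_s\|^2+\|\nabla^2\log\tilde p_{T-s}(\Xola_s)\SigmaEps\|_F^2]\,\rmd s$, and then absorb that integral into $g(t_{k+1})-g(t_k)$ via a de~Bruijn-type dissipation inequality, exactly as in the adaptation of Conforti et al.\ that the paper cites. The two genuine differences are in your favour. First, you use a Duhamel/integrating-factor decomposition of the increment where the paper uses a direct Cauchy--Schwarz on the drift integral; this is cosmetic. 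Second, and substantively, you carry out the dissipation step in the twisted norm $v\mapsto v^\top\mathfrak{M}^{-1}v$ rather than the Euclidean norm. The paper asserts $\rmd\|Y_t\|^2\ge c\big(\|Y_t\|^2+\|Z_t\SigmaEps\|_{\rm Fr}^2\big)\rmd t+\tilde H_t\rmd B_t$ with $c>0$ depending only on $a$, which requires $-(\tilde A_\epsilon+\tilde A_\epsilon^\top)\succcurlyeq c\,\Id_{2d}$; for $\varepsilon=0$ one computes (per $d=1$ block) $\tilde A_\epsilon+\tilde A_\epsilon^\top=\begin{psmallmatrix}0&1-a^2\\1-a^2&-4a\end{psmallmatrix}$, whose determinant is $-(1-a^2)^2\le 0$, so strict Euclidean coercivity fails whenever $a\neq 1$. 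Your observation that the hypocoercive matrix $\mathfrak{M}$ of Lemma~\ref{lem:discr_error-modified_score} is genuinely needed here is correct, and your congruence argument $\tilde A_\epsilon\mathfrak{M}^{-1}+\mathfrak{M}^{-1}\tilde A_\epsilon^\top\preccurlyeq-2(a-\eta)\mathfrak{M}^{-1}$ is the right fix.

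One caveat to make explicit: what you actually prove is $\E[\|Y_t-Y_{t_k}\|^2]\le C\big(g_{\mathfrak{M}}(t_{k+1})-g_{\mathfrak{M}}(t_k)\big)$, not the stated bound with $g$. Since the increments of two equivalent quadratic forms do not control one another (norm equivalence gives $g\asymp g_{\mathfrak M}$ pointwise, not for differences), "returning to the Euclidean norm" does not literally recover the statement; you are proving the lemma with $g$ replaced by $g_{\mathfrak M}$. As you note, this is harmless where the lemma is used: in Lemma~\ref{lem:discr_error-modified_score} only the telescoping sum $\sum_k(g(t_{k+1})-g(t_k))\le g(t_N)$ and the finiteness of $g(t_N)$ (via Assumption~\ref{hyp:fisher_info}) matter, and $g_{\mathfrak M}\le\lambda_{\max}(\mathfrak{M}^{-1})\,g$ preserves both. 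State that substitution explicitly and the argument is complete; you should also record, as the paper does via \cite[Lemma 3.3]{conforti2025kl}, that the local martingale appearing in the Itô expansion of $Z_t^\top\mathfrak{M}^{-1}Z_t$ is a true martingale before taking expectations.
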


\begin{proof}
The argument follows from an adaptation of \cite[Proposition 3.2]{conforti2025kl} to our setting.  
Let $Y_t:= \nabla\log\tilde p_{T-t}(\Xola_t)$ and $Z_t:= \nabla^2 \log\tilde p_{T-t}(\Xola_t)$. 
From \eqref{eq:SDE-score-modif}, the process $(Y_t)_{t \in [0,T]}$ satisfies
\begin{align*}
    \rmd Y_t & = - \tilde A_{\epsilon}^\top Y_t \rmd t + Z_t \Sigma_{\epsilon} \rmd B_t \eqsp.
\end{align*}
Applying Itô's formula to $\|Y_t\|^2$ yields
\begin{align*}
    \rmd \left\|Y_t\right\|^2 
    &= -2\langle Y_t, \tilde A_{\epsilon}^\top Y_t \rangle\, \rmd t + 2\langle Y_t, Z_t \Sigma_{\epsilon}\, \rmd B_t \rangle + \left\|Z_t \Sigma_{\epsilon}\right\|^2_{\rm Fr} \, \rmd t \eqsp.
\end{align*}
Therefore, there exists a constant $c>0$, depending only on $a$, such that 
\begin{align*}
    \rmd \left\|Y_t\right\|^2 &\geq c \left(  \left\| Y_t \right\|^2 +
    \left\|Z_t \Sigma_{\epsilon} \right\|_{\rm Fr}^2 \right) \rmd t + \tilde H_t \rmd B_t \eqsp,
\end{align*}
where $\tilde H_t$ denotes a stochastic process.
Moreover, following the argument of \cite[Lemma 3.3]{conforti2025kl}, the stochastic integral $\int_0^t\tilde H_r \rmd B_r$ is a true martingale. Using this and integrating over $[t_k, t]$, we deduce that there exists a universal constant $C > 0$ (whose value may change in the course of the argument) such that
\begin{equation*}
    \E\left[\left\|Y_t - Y_{t_k}\right\|^2\right]
    \leq
    C\int_{t_k}^{t_{k+1}}\E\left[
    \left\|Y_s\right\|^2 + \left\|Z_s \Sigma_{\epsilon}\right\|^2_{\rm Fr}
    \right]\rmd s
    \leq C (g(t_{k+1}) - g(t_{k}))\eqsp.
\end{equation*}
\end{proof}

\begin{lemma}
\label{technical-lemma:inverse}
Let \( A \in \mathbb{R}^{n \times n} \) be an invertible matrix, and let \( B \in \mathbb{R}^{n \times n} \) be such that \( A - B \) is also invertible. Then,
\[
(A - B)^{-1} - A^{-1} = (A - B)^{-1} B A^{-1}.
\]
\end{lemma}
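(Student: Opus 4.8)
The identity is purely algebraic and no analytic input is needed, so the plan is to verify it by a short direct manipulation, being careful that all products are well-defined (which they are, since $A$ and $A-B$ are both invertible).

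First I would write the left-hand side with a common ``factor'' on the left: since $(A-B)^{-1}(A-B) = \Id_n$, we have
\begin{equation*}
    (A-B)^{-1} - A^{-1}
    = (A-B)^{-1}\Id_n - (A-B)^{-1}(A-B)A^{-1}
    = (A-B)^{-1}\bigl(\Id_n - (A-B)A^{-1}\bigr).
\end{equation*}
Next I would simplify the bracket using $(A-B)A^{-1} = AA^{-1} - BA^{-1} = \Id_n - BA^{-1}$, which gives $\Id_n - (A-B)A^{-1} = BA^{-1}$. Substituting this back yields $(A-B)^{-1} - A^{-1} = (A-B)^{-1} B A^{-1}$, which is exactly the claimed identity.

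Alternatively, if one prefers a ``sandwich'' check, I would multiply the desired identity on the left by $(A-B)$ and on the right by $A$ and verify that both sides reduce to $A - (A-B) = B$; since $A-B$ and $A$ are invertible this is equivalent to the original statement. Either route is a two-line computation, so there is no real obstacle here; the only thing to be attentive to is the non-commutativity of matrix multiplication, i.e.\ to keep the factors $(A-B)^{-1}$ on the left and $A^{-1}$ on the right throughout, and to invoke invertibility of both $A$ and $A-B$ only to guarantee that the inverses appearing in the statement exist.
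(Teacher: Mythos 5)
Your proof is correct and is essentially the same two-line algebraic manipulation as the paper's: you factor $(A-B)^{-1}$ out on the left and simplify the remaining bracket, whereas the paper factors $A^{-1}$ out on the right, but these are mirror images of the identical computation. No issues.
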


\begin{proof}
Note that
\[
(A - B)^{-1} - A^{-1} = (A - B)^{-1} A A^{-1} - A^{-1} = \left[ (A - B)^{-1} A - \Id_n \right] A^{-1}\eqsp,
\]
and
\[
(A - B)^{-1} A = (A - B)^{-1} \left( (A - B) + B \right) = \Id_n + (A - B)^{-1} B\eqsp,
\]
so that
\[
\left[ (A - B)^{-1} A - \Id_n \right] A^{-1} = (A - B)^{-1} B A^{-1}\eqsp,
\]
which completes the proof.
\end{proof}

\clearpage
\section{Numerical Illustration}

This section provides additional details on the numerical implementation described in Section~\ref{sec:numerical_xp}.

\subsection{CLD training and sampling} \label{app:train_sample}

Algorithms \ref{alg:cld_training} and \ref{alg:cld_sampling} show the training and sampling procedures for the CLD-based approaches, respectively.

\begin{algorithm}[ht!]
  \caption{CLD Training}
  \label{alg:cld_training}
  \begin{algorithmic}[1]
    \Require Dataset $\mathcal{D}$, batch size $B$, network $s_\theta(\cdot,t)$, a positive weight function $\lambda : [0,T] \to \mathbb{R}_+$ and $\epsilon \geq 0$.
    \State Precompute $\tilde \Sigma_{0,t} = \Sigma_{0,t} + e^{t A} \mathrm{diag}(0 \Id_d, v^2 \Id_d) (e^{t A})^\top$. \Comment{The value of $\Sigma_{0,t}$ depends on $\epsilon$, see Lemma A.2. (eq 23).}
    \While{not converged}
      \State Sample $\{x^{(i)}\}_{i=1}^{B} \sim \mathcal{D}$ 
      \State Sample $\{t^{(i)}\}_{i=1}^{B} \sim \mathcal{U}([0,T])$
      \State Sample $\{\varepsilon^{(i)}\}_{i=1}^{B} \sim \mathcal{N}(0, \Id_{2d})$
      \State $\Xora_{t^{(i)}} = e^{t^{(i)} A} \left( \ora X_0, 0_d \right)^\top + (\tilde \Sigma_{0,t^{(i)}})^{1/2} \varepsilon^{(i)}$
      \State $\mathcal L \gets \frac{1}{B}\sum_{i=1}^{B}
             \lambda(t^{(i)}) \left\|s_\theta\left(t^{(i)},  \Xora_{t^{(i)}}\right) + (\tilde \Sigma_{0,t^{(i)}})^{-1/2} \varepsilon^{(i)} \right\|^2$
      \State Update $\theta$ by taking gradient step on
             $\nabla_\theta\mathcal L$
    \EndWhile
  \end{algorithmic}
\end{algorithm}

\begin{algorithm}[ht!]
  \caption{CLD Sampling}
  \label{alg:cld_sampling}
  \begin{algorithmic}[1]
    \Require Learned network $s_\theta$, number of discretization steps $N$ and $\epsilon \geq 0$.
    \State $h \leftarrow T/N$
    \State $\mathbf{\bar U}_{0}\sim \pi_{\infty}$
    \For{$k = 0$ down to $N-1$}
        \State $t_k \leftarrow k\,h$
        \State Sample $Z_k \sim \mathcal{N}(0, \Id_{2d})$ \Comment{$\pi_{\infty}$ depends on $\epsilon$, see \eqref{eq:sigma_infinity} in Lemma \ref{prop:convergence-fwd-flow}.}
        \State $\Xbar^\theta_{t_{k+1}} = \Xbar^\theta_{t_k} + h \left( \tilde A_{\epsilon} \Xbar^\theta_{t_k} + \Sigma_{\epsilon}^2 s_\theta(t_k, \Xbar^\theta_{t_k}) \right) + \sqrt{h} \Sigma_{\epsilon}  Z_k$
    \EndFor
    \State \Return First $d$ coordinates of $\Xbar^\theta_{t_N}$ \Comment{Return position only, discard velocity.}
  \end{algorithmic}
\end{algorithm}

\subsection{Time-rescaling of the forward SDE} \label{app:time_rescaling}

Following \cite{dockhorn2021score}, one often implements in practice a time-rescaled version of 
\begin{align*}
\rmd \Xora_t = A \Xora_t \rmd t + \Sigma_{\epsilon} \rmd B_t \eqsp,
\end{align*}
by introducing a  positive noise schedule $\beta\colon[0,1]\to [0,\infty)$ and setting
\begin{align*}
\tilde \Xora_t = \Xora_{\tau(t)} 
\quad \mathrm{and} \quad
\tau(t) = \int_0^t \beta(s) \rmd s \eqsp.
\end{align*}
Equivalently, $\tilde \Xora_t$ satisfies the inhomogeneous SDE
\begin{align*}
\rmd \tilde \Xora_t
= \underbrace{\beta(t) A}_{=\tilde A(t)} \tilde \Xora_t \rmd t +
\underbrace{\sqrt{\beta(t)}\,\Sigma_{\epsilon}}_{=\tilde\Sigma_{\epsilon}(t)} \rmd B_t \eqsp,
\end{align*}
In the critically-damped example (Equation \eqref{eq:forward_SDE}), we have
\begin{align*}
\tilde A(t)
= \begin{pmatrix}
0 & \beta(t)\,a^2\\
-\beta(t) & -2a\,\beta(t)
\end{pmatrix}\otimes \Id_d,
\quad
\tilde\Sigma_{\epsilon}(t)=\sqrt{\beta(t)} \Sigma_{\epsilon} \otimes \Id_d\eqsp.
\end{align*}
\medskip
\paragraph{Mean factor.}
Since $\tilde \Xora_t = \Xora_{\tau(t)}$, we can deduce from the homogeneous solution the mean factor, 
\begin{align*}
\mathbb{E} \left[\tilde \Xora_t\mid \Xora_0 \right]
= e^{-a\,\tau(t)} \left(
\begin{pmatrix}
1 + a\,\tau(t) & a^2\,\tau(t)\\
-\,\tau(t)     & 1 - a\,\tau(t)
\end{pmatrix}\otimes \Id_d \right)\Xora_0 \eqsp.
\end{align*}

\paragraph{Covariance.} Again by the time-change $\tau(t)$, one has
\begin{align*}
\text{Cov}\bigl(\tilde \Xora_t\mid \Xora_0\bigr)
=\text{Cov}\bigl(\Xora_{\tau(t)}\mid \Xora_0\bigr)
=\int_{0}^{\tau(t)}
  e^{sA} \Sigma_{\epsilon}\ \Sigma_{\epsilon}^T e^{sA^T} \rmd s\eqsp.
\end{align*}

\paragraph{Affine schedule.} A popular and simple choice of noise schedule is an affine noise schedule given by
\begin{align*}
\beta(t)=\beta_1 t + \beta_0 \eqsp,
\quad
\tau(t)=\frac{\beta_1}{2} t^2 + \beta_0 t \eqsp.
\end{align*}

\subsection{Score approximation} \label{app:score_approx}

\paragraph{Denoising Score Matching (DSM).} Recall that the  conditional score function of the forward process \eqref{eq:forward_SDE} given the initial data distribution is Gaussian,
\begin{align*}
    \nabla \log p_t( \Xora_t| \Xora_0) & = - \Sigma_{0,t}^{-1}  \left( \Xora_t - e^{tA} \Xora_0 \right) \eqsp.
\end{align*}
Hence, following \cite{Vincent} the conditional denoising score matching loss $\mathcal{L}_{\rm cond}$, for $\theta \in \Theta$, $s_\theta (t,x) : [0,T] \times \mathbb{R}^{2d} \mapsto \mathbb{R}^{2d}$ and $Z_{2d} \sim \mathcal{N}(0,\Id_{2d})$ can be written as 
\begin{align*}
\mathcal{L}_{\mathrm{DSM}}(\theta) & = \mathbb{E}\left[ \lambda(t) \left\|s_{\theta} \left( \tau, \Xora_{\tau} \right) - \nabla \log p_\tau \left(\Xora _\tau| \Xora_0 \right)\right\|^2\right] \\
& = \mathbb{E}\left[ \lambda(t)  \left\|s_{\theta} \left( \tau,  \rme^{\tau A} \Xora_0 +  \sqrt{\Sigma_{0,\tau}} Z_{2d} \right) + \Sigma_{0,t}^{-1/2} Z_{2d} \right\|^2\right] \eqsp, 
\end{align*}
where $\tau \sim \mathcal{U} \left[ 0,T \right]$, $\tau \perp Z_{2d}$ and $\lambda : [0,T] \mapsto \mathbb{R}_{>0}$. 

\paragraph{Hybrid Score Matching (HSM).}
It has been shown in \cite{dockhorn2021score} that another loss, potentially more stable numerically can be obtained by conditioning only on $\ora X_0$ rather than on the full state $\Xora_0 = (\ora X_0, \ora V_0)^\top$. This hybrid score matching loss can be derived by marginalizing out the velocity component $\ora V_0 \sim \mathcal{N} \left( 0_d , v^2 \Id_d \right)$, $\ora V_0 \perp \ora X_0$ in the conditional score function,
\begin{align*}
\mathcal{L}_{\mathrm{HSM}}(\theta) & = \mathbb{E} \left[ \lambda(t) \left\| s_\theta(\tau, \Xora_\tau) - \nabla \log p_\tau(\Xora_\tau \mid \ora X_0)\right\|^2 \right] \\
& = \mathbb{E}\left[ \lambda(t) \left\|s_\theta\left(\tau, e^{\tau A} \begin{pmatrix}
    \ora X_0 \\ 0_d
\end{pmatrix} + \sqrt{\Sigma_{0,\tau}'} Z_{2d} \right) + (\Sigma_{0,\tau}')^{-1/2} Z_{2d} \right\|^2 \right],
\end{align*}
with $Z_{2d} \sim \mathcal{N}(0, \Id_{2d})$ independent of $\tau \sim \mathcal{U}[0,T]$ and
\begin{align*}
    \Sigma_{0,\tau}' = \Sigma_{0,\tau} + e^{\tau A} \begin{pmatrix}
        0 & 0 \\ 0 & v^2 \Id_d
    \end{pmatrix}(e^{\tau A})^\top \eqsp.
\end{align*}

\subsection{Neural network architectures}

In Figure \ref{fig:nn_architecture} we detail the neural network used in the illustration. The input layer is composed of a vector $x$ in dimension $2d$ and the time $t$. Both are respectively embedded using a linear transformation or a sine/cosine transformation \cite{nichol2021improved} of width $\texttt{mid\_features}$. Then, 3 dense layers of constant width $\texttt{mid\_features}$ followed by SiLu activations and skip connections regarding the time embedding. The output layer is linear resulting in a vector of dimension $d$ (when $\varepsilon = 0$) and $2d$ (when $\varepsilon \neq 0$).
\begin{figure}[h]
    \centering
    \includegraphics[width=0.7\linewidth]{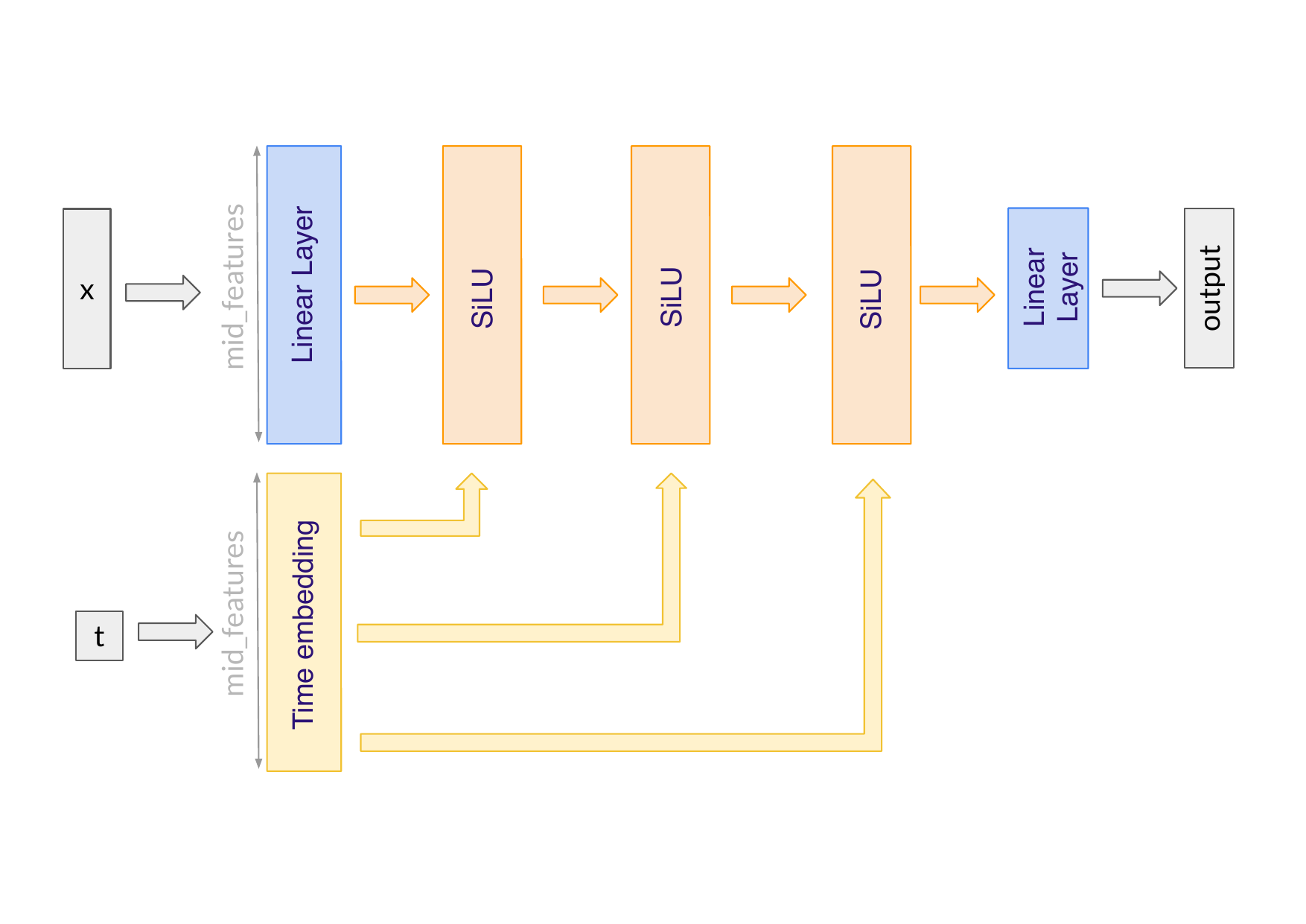}
    \caption{Neural network architecture.  }
    \label{fig:nn_architecture}
\end{figure}

\subsection{Additional experiments}
\label{app:subsec:add_exp}

We present additional experimental results for the MG25 distribution in dimension 100 and the 2D-diamond dataset. The MG25 distribution is defined as a Gaussian mixture model with 25 modes in dimension $100$, defined as
$$ \pi_{\rm data} (x) = \frac{1}{25} \sum_{(j,k)\in\{-2, \dots , 2\}^2}\varphi_{\mu_{jk} , \Sigma_d} (x)$$
with $\varphi_{\mu_{jk} , \Sigma_d}$ denoting the probability density function of the Gaussian distribution with covariance matrix $\Sigma_d = \rm diag \left( 0.01, 0.01, 0.1, ..., 0.1 \right)$ and mean vector $\mu_{jk} = [j,k, 0,0,0...,0]^\top$. This dataset has been previously used in \cite{Neo_moulines_lecorff,strasman2024analysis}. The 2D-diamond distribution is a two-dimensional dataset with well-separated modes, used as a synthetic dataset in \cite{dockhorn2021score}.

Tables \ref{tab:W2_MG25}, \ref{tab:W2_diamond} and \ref{tab:W2_funnell} report the sliced-Wasserstein error for different values of the regularization parameter $\varepsilon \in \{0, 0.1, 0.25, 0.5, 1\}$ and drift coefficient $a \in \{0.1, 0.25, 0.5, 1, 2\}$, using the same experimental setup as for the Funnel dataset described in Section~\ref{sec:numerical_xp}. Both tables \ref{tab:W2_MG25} and \ref{tab:W2_diamond} highlight the improvement in generation quality achieved with smaller regularization values of $\varepsilon$. Table \ref{tab:W2_funnell} report the values displayed in Figure~\ref{fig:W2_funnel} with the associated standard deviations.

\begin{table}[ht]
\centering
\caption{Comparison of mean Wasserstein distance for different noise levels $\varepsilon$ on the MG25-100D (mean $\pm$ standard deviation across 5 runs; lower is better).}
\label{tab:W2_MG25}
\begin{tabular}{llllll}
\toprule
$\varepsilon$ & $a=0.1$ & $a=0.25$ & $a=0.5$ & $a=1.0$ & $a=2.0$ \\ \midrule
0 & 0.284 $\pm$ 0.002 & 0.199 $\pm$ 0.001 & 0.034 $\pm$ 0.002 & 0.009 $\pm$ 0.001 & 0.009 $\pm$ 0.001 \\
0.1 & 0.192 $\pm$ 0.001 & 0.159 $\pm$ 0.001 & 0.026 $\pm$ 0.001 & $\bm{0.005\pm0.001} $& 0.008 $\pm$ 0.001 \\
0.25 & $\bm{0.013\pm0.001}$ & 0.065 $\pm$ 0.001 & 0.015 $\pm$ 0.001 & 0.007 $\pm$ 0.001 & $\bm{0.007\pm0.001}$ \\
0.5 & 0.191 $\pm$ 0.007 & $\bm{0.004\pm0.001}$ & $\bm{0.009\pm0.001}$ & 0.008 $\pm$ 0.001 & 0.008 $\pm$ 0.001 \\
1 & 0.389 $\pm$ 0.030 & 0.045 $\pm$ 0.003 & 0.011 $\pm$ 0.002 & 0.006 $\pm$ 0.001 & 0.008 $\pm$ 0.001 \\
\bottomrule
\end{tabular}
\end{table}

\clearpage

\begin{table}[ht]
\centering
\caption{Comparison of mean Wasserstein distance for different noise levels $\varepsilon$ on the Diamond-2D (mean $\pm$ standard deviation across 5 runs; lower is better).}
\label{tab:W2_diamond}
\begin{tabular}{llllll}
\toprule
$\varepsilon$ & $a=0.1$ & $a=0.25$ & $a=0.5$ & $a=1.0$ & $a=2.0$ \\ \midrule
0 & 0.322 $\pm$ 0.001 & 0.256 $\pm$ 0.004 & 0.039 $\pm$ 0.002 & 0.007 $\pm$ 0.001 & 0.007 $\pm$ 0.002 \\
0.1 & 0.234 $\pm$ 0.001 & 0.198 $\pm$ 0.003 & 0.026 $\pm$ 0.004 & 0.004 $\pm$ 0.001 & 0.005 $\pm$ 0.001 \\
0.25 & $\bm{0.048\pm0.001}$ & 0.074 $\pm$ 0.003 & 0.021 $\pm$ 0.002 & $\bm{0.004\pm0.001}$ & $\bm{0.005\pm0.001}$ \\
0.5 & 0.073 $\pm$ 0.002 & $\bm{0.008\pm0.001}$ & $\bm{0.008\pm0.002}$ & 0.006 $\pm$ 0.002 & 0.006 $\pm$ 0.002 \\
1 & 0.095 $\pm$ 0.002 & 0.029 $\pm$ 0.002 & 0.014 $\pm$ 0.001 & 0.013 $\pm$ 0.001 & 0.011 $\pm$ 0.001 \\
\bottomrule
\end{tabular}
\end{table}

\begin{table}[ht]
\centering
\caption{Comparison of mean Wasserstein distance for different noise levels $\varepsilon$ on the Funnel‑100D (mean $\pm$ standard deviation across 5 runs; lower is better).}
\label{tab:W2_funnell}
\begin{tabular}{llllll}
\toprule
$\varepsilon$ & $a=0.1$ & $a=0.25$ & $a=0.5$ & $a=1.0$ & $a=2.0$ \\ \midrule
0   & 0.991 $\pm$ 0.001 & 0.73 $\pm$ 0.002 & 0.291 $\pm$ 0.005 & 0.225 $\pm$ 0.056  & 0.223 $\pm$ 0.011 \\
0.1 & 0.705 $\pm$ 0.001 & 0.632 $\pm$ 0.002 & 0.278 $\pm$ 0.001 & 0.158 $\pm$ 0.027  & 0.198 $\pm$ 0.004 \\
0.25 & $\bm{0.277\pm0.002}$ & 0.409 $\pm$ 0.003 & 0.248 $\pm$ 0.012  & $\bm{0.137\pm0.005}$ & $\bm{0.179\pm0.006}$ \\
0.5 & 1.171 $\pm$ 0.015 & $\bm{0.248\pm0.002}$ & 0.228 $\pm$ 0.005 & 0.157 $\pm$ 0.002  & 0.203 $\pm$ 0.003 \\
1   & 2.885 $\pm$ 0.016 & 0.785 $\pm$ 0.011  & $\bm{0.191\pm0.008}$ & 0.253 $\pm$ 0.006   & 0.233 $\pm$ 0.002 \\
\bottomrule
\end{tabular}
\end{table}

\end{document}